\newcommand{\keywords}[1]{\par\addvspace\baselineskip
\noindent\keywordname\enspace\ignorespaces#1}
\numberwithin{equation}{section}
\def\sE{{\mathscr E}}
\def\sF{{\mathscr F}}
\def\sC{{\mathscr C}}
\def\cB{{\mathcal{B}}}
\def\cE{{\mathcal{E}}}
\def\bR{{\mathbb{R}}}
\def\ts{{\mathtt{s}}}
\def\re{{\mathrm{e}}}
\def\fm{{\mathfrak{m}}}
\def\b0{{\textbf{0}}}
\def\wh{\widehat}
\def\wt{\widetilde}
\def\<{\langle}
\def\>{\rangle}
\newcommand{\IE}{{\mathbb{E}}}
\newcommand{\IP}{{\mathbb{P}}}
\newcommand{\IR}{{\mathbb{R}}}
\begin{document}


\title{Explicit heat kernels of a model of distorted Brownian motion on spaces with varying dimension}

\titlerunning{Heat kernels for dBM with varying dimension}

%
%
\author{ Shuwen Lou%
}
\authorrunning{ S. Lou}

\institute{Loyola University Chicago\\ Chicago, IL 60660, USA.\\
\mailsc\\
}

%
%

\maketitle

\begin{abstract}
In this paper, we study a particular model of distorted Brownian motion (dBM) on  state spaces with varying dimension. Roughly speaking, the state space of such a process consists of two components: a $3$-dimensional component and a $1$-dimensional component. These two parts are joined together at the origin. The restriction of dBM on the $3$- or $1$-dimensional  component  receives a strong ``push" towards the origin. On each component, the ``magnitude" of the ``push" can be parametrized by a constant $\gamma >0$. In this article, using probabilistic method, we get the exact expressions for the  transition density functions of dBM with varying dimension for any $0<t<\infty$. 
\\ \\
{\bf AMS 2010 Mathematics Subject Classification}: Primary 60J60, 60J35; Secondary 60J45, 60J65.

\keywords{Distorted Brownian motions, Dirichlet forms, varying dimension, transition density}
\end{abstract}

\section{Introduction}\label{Sec-intro}

The concept of $3$-dimensional distorted Brownian motion arises in statistical physics. To give  a brief description to $3$-dimensional dBM, we consider the the standard $3$-dimensional Brownian motion on the path space denoted by $(\Omega, \{\IP^x\}_{x\in \IR^3}, \omega(t), t\ge 0)$. For the Hamiltonian we select $H(\omega)=\int_0^t \mathbf{1}_{\{|x|\le \epsilon\}}(\omega (s))ds $. For  $A(\epsilon)=\frac{\pi^2}{8\epsilon^2}+\frac{\gamma}{\epsilon}$ where $\gamma$ is a positive parameter, we define the Gibbs measure $\IP^x_{\beta, t}$ by setting 
\begin{equation*}
\frac{d\IP^x_{\beta, t}}{d\IP^x}=\frac{\exp\left\{ A(\epsilon)\int_0^t \mathbf{1}_{\{|x|\le \epsilon\}}(\omega (s))ds\right\}}{Z_{\beta, t}(x)},
\end{equation*}
where 
\begin{equation*}
Z_{\beta, t}(x)=\IE^x \left[\exp\left\{A(\epsilon) \int_0^t \mathbf{1}_{\{|x|\le \epsilon\}}(\omega (s))ds\right\}\right]
\end{equation*}
is the normalizing constant making $\IP^x_{\beta, t}$ a probability measure. This model arises from the discrete homopolymer model: The latter is similar to the model described above, with the only changes being that $3$-dimensional Brownian path $\omega(t)$ is replaced with $3$-dimensional continuous time simple random walk on $\mathbb{Z}^3$, $\mathbf{1}_{\{|x|\le \epsilon\}}$ is replaced with $\delta_0$, and that $A(\epsilon)$ is replaced with $\gamma$. 

For the continuous model on $\IR^3$ we introduce above, as $\epsilon\rightarrow 0$, there is a limiting process associated with it. The rigorous meaning of the ``limit" can be found in \cite{CM}. Roughly speaking, as $\epsilon\rightarrow 0$, the resolvents converge to another family of resolvents which has a Markov process associated with it. We call such a  limiting process  a $3$-dimensional distorted Brownian motion with parameter $\gamma$.

Many interesting  properties of $3$-dimensional dBM  have been investigated in \cite{CM}, \cite{CKMV1}, and \cite{CKMV2}, including its explicit transition densities and behaviors near the origin. Later in \cite{FL17}, Fitzsimmons and Li give a very thorough description to  this process by means  of  its associated Dirichlet form.

Unlike a $3$-dimensional standard Brownian motion which does not hit any  singleton, a $3$-dimensional dBM is subject to a strong push towards the origin. Therefore, it  is recurrent and has positive capacity at the origin, which allows us to study  such a process on a state space with varying dimension.    Such a  state space with varying dimension consists of two components: a $3$-dimensional component and a $1$-dimensional component. These two parts are joined together at the origin.The study of Markov processes with varying dimension was originated in \cite{CL}, where the model is constructed by joining together a $2$-dimensional Brownian motion and a $1$-dimensional Brownian motion on half line. Since $2$-dimensional Brownian motion does not hit any singleton, the construction of such a process  with varying dimension utilizes the method of ``darning", i.e.,  setting the resistance on a $2$-dimensional disc equal to zero. The disc is  centered at the intersection of the plane and the pole. The model studied in \cite{CL} is a toy model of Markov processes with varying dimension, but many important properties  as well as techniques of analyzing such process have been developed in that article.

In this paper, we first give a more precise description to dBM with varying dimension. The state space of such a process is embedded in $\IR^4$. We let $\mathbb{R}^4\supset E_1:=\{(x, 0): x\in \mathbb{R}^3\}\cong \mathbb{R}^3$ and $\mathbb{R}^4\supset E_2:=\{(0,0,0, x): x\in [0, +\infty)\}\cong [0, +\infty)$. Set
\[
	E:=E_1\cup E_2. 
\]
Clearly, $E_1\cap E_2=(0,0,0,0)=:\b0\in \mathbb{R}^4$. $E$ is a topological space and a  neighborhood of $\b0$ defined as $\{\b0\}\cup \left(V_1\cap E_1 \right)\cup \left(V_2\cap E_2\right)$ for some neighborhood $V_1$ of $\b0$ in $E_1$ and $V_2$ of $\b0 $ in $E_2$.  The restriction of dBM with varying dimension on $E_1$ and $E_2$ behaves like a $3$-dimensional and an $1$-dimensional distorted Brownian motion, respectively.  We emphasize that in this paper, except for $\b0\in \IR^4$, both  vectors and  scalars are unbolded. For example, we may use $``x"$ to denote an element in $E_1\subset \IR^4$.

The main result of this paper is obtaining the explicit expression for the transition density function of  distorted Brownian motion with varying dimension for all $t>0$, for the case that the ``parameter of distortion" $\gamma>0$ is the same on both the $3$-dimensional and $1$-dimensional components. The key observation is that for this case, the signed radial process of this process is symmetric about $0$. Therefore the ``absolute"  radial process is actually  a Brownian motion reflected at zero with a constant drift pushing towards the origin.  From here, realizing  that the distribution of the signed radial process can be ``decomposed" into a  Brownian motion with drift reflected at the origin and a Brownian motion with drift killed at the origin, both $1$-dimensional, we derive the explicit global transition density of the process for all $t>0$.

Before we state the main results, we introduce the  underlying measure and the metric  on the state space.    Throughout this paper, we denote by   $|x-y|$  the Euclidean distance between $x$ and $y$ if either $x,y\in E_1$ or $x,y\in E_2$. This can either be viewed as Euclidean distance on $\IR^4$, or its projection onto $\IR_+$ or $\IR^3$.  By slightly abusing the notation,  we let
\begin{equation}\label{def-metric}
|x-y|:=|x-\b0|+|y-\b0|, \quad \text{if     }x\in E_1,\, y\in E_2.
\end{equation} 
Fix a parameter $\gamma>0$.  The measure $m_\gamma$ on $E$ is given as 
\begin{eqnarray}\label{definition-m}
	m_\gamma(dx):=\left\lbrace
	\begin{aligned}
		&\frac{\gamma}{2\pi}\frac{e^{-2\gamma |x|}}{|x|^2}d{x}, \quad \text{ on }E_1, \\
		&2\gamma e^{-2\gamma |x|}dx,\quad \text{ on }E_2. 
	\end{aligned} \right.
\end{eqnarray}
In the above $dx$  is  Lebesgue measure on $\IR^4$. $m_\gamma$ is well-defined because $0$ is of zero-Lebesgue-measure for both $1$-dimensional and $3$-dimensional spaces.

 In this  paper, we denote the  main process of interest, the distorted Brownian motion with varying dimension by $M$, whose rigorous definition is given in  Definition \ref{definition-dBMVD}, Section 2.2.  For any connected $C^{1,1}$ open subset $D$ of $E$, we let $M^D$ be the part process of dBM with varying dimension killed upon exiting $D$, and  denote by $p_{D}(t,x,y)$ its  transition density function. Similar notations can be defined for other stochastic processes.  Throughout this paper, we set
\begin{equation*}
\IR_+:=(0, +\infty), \quad \IR_-:= (-\infty, 0).
\end{equation*}
 Given a subset $A\subset E$, we define $\sigma_A:=\inf\{t>0, M_t\in A\}$ and  $\tau_{_A}:=\inf\{t\geq 0:M _t \notin A\}$. By convention $\inf \emptyset:=\infty$.   Similar notations are defined for other stochastic processes as well.  For notation convenience, in this paper, given $x\notin K$, we set
\begin{equation}\label{e:1.6}
p(t,x,y;\; \sigma_K\le  t):=p(t,x,y)-p_{E\backslash K}(t,x,y), 
\end{equation}
where $K$ is a compact subset of $E$ and $p_{E\backslash K}(t,x,y)$ is the transition density of the part process killed upon exiting $E\backslash K$.
In other words, for any non-negative function $f\geq 0$ on $E$,
\begin{equation}\label{e:1.7}
 \int_E   p(t,x,y; \sigma_K<t)  f(y) m_\gamma(dy) = \IE_x \left[ f(M_t); t\ge  \tau_{E\backslash K} \right] = \IE_x \left[ f(M_t); t\ge  \sigma_{K} \right].
\end{equation}
Thus while $p_{E\backslash K}(t,x,y)$ gives the probability density
that $M$ starting from $x$ hits $y$ at time $t$ without visiting  $K$, 
 $p(t,x,y; \sigma_K<t) $ is the probability density for $M$ starting from $x$ visits $K$ before ending up at $y$ at time $t$. Similar notations are defined for other stochastic processess. In this paper, in order to distinguish the transition densities or stopping times of different processes, sometimes we use superscripts, e.g., $p^M(t,x,y)$ or $\sigma^M$, to emphasize that it is the transition density  or hitting time of process $M$, whenever there might be confusion.

 The main result of this paper is the following explicit transition density function for dBM with varying dimension.
 \begin{theorem}\label{main-thm}
Fix $\gamma>0$. With respect to the measure $m_\gamma$ given in \eqref{definition-m}, for all $t>0$, the transition density of $M$, denoted by $p(t,x,y)$, has the following expression:
\begin{description}
\item{(i)}
\begin{equation*}
p(t,x,y)=q(t,x, y)+\frac{1}{2}\left(p^{\wh{Y}}(t, |x|, |y|)-p^Y_{\IR_+}(t, |x|, |y|)\right), \quad  x,y\in E_1\backslash \{\b0\};
\end{equation*}
\item{(ii)} 
\begin{equation*}
p(t,x,y)=\frac{1}{2}\left(p_{E_2\backslash \{\b0\}}(t,x,y)+p^{\wh{Y}}(t, |x|, |y|)\right), \quad x,y\in E_2\backslash \{\b0\};
\end{equation*}
\item{(iii)} 
\begin{equation*}
p(t,x,y)=\frac{1}{2}\left(p^{\wh{Y}}(t, |x|, |y|)-p^Y_{\IR_+}(t, |x|, |y|)\right), \quad x\in E_1\backslash \{\b0\}, y\in E_2\backslash \{\b0\},
\end{equation*}
\item{(iv)}
\begin{equation*}
p(t,\b0,y)=\frac{1}{2}\left(1+\frac{1}{\pi \gamma} e^{\gamma |y|-\gamma t^2/2} \int_0^\infty \frac{s e^{-s^2t/2}}{s^2+\gamma^2} \left(s^2\cos(s|y|)-s\gamma \sin(s|y|)\right)ds \right),\quad y\in E.
\end{equation*}
\end{description}
where the explicit expressions of $q(t,x,y)$, $p^{\wh{Y}}(t,x,y)$, $p^Y_{\IR_+}(t, |x|, |y|)$,  and $p_{E_2\backslash \{\b0\}}(t,x,y)$ are given in \eqref{EQQTX}, \eqref{density-RBM-drift}, \eqref{density-pY_R-},  and  \eqref{density-p_E2} respectively.
\end{theorem}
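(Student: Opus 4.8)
The plan is to reduce everything to one-dimensional computations via the radial symmetry emphasized in the introduction. The key structural fact is that when $\gamma$ is the same on both components, the signed radial process $R_t$ (taking $R_t = |M_t|$ on $E_1$ and $R_t = -|M_t|$ on $E_2$, say) is a one-dimensional diffusion that is \emph{symmetric} about $0$: it behaves like a Brownian motion with constant drift $-\gamma\,\mathrm{sgn}(r)$ pushing toward the origin. Consequently $|M_t|$ is a Brownian motion with drift $-\gamma$ reflected at $0$, whose transition density I will call $p^{\wh Y}$; and the ``which side'' indicator, conditioned on the path of $|M_t|$, is an independent fair coin flip each time the origin is visited. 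This is the decomposition alluded to in the introduction: the law of $R_t$ splits as one half of a reflected-at-$0$ Brownian motion with drift (this is the part that has crossed through $0$, where the sign is re-randomized) plus one half of a Brownian motion with drift \emph{killed} at $0$ (the part that has stayed on the starting side), whose density is $p^Y_{\IR_+}$.

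First I would make the above rigorous: identify the Dirichlet form of $M$ from Definition~\ref{definition-dBMVD}, compute the Dirichlet form of the pushed-forward process $R$ under the map $x \mapsto \pm|x|$, and verify it is the symmetric diffusion on $\IR$ with speed/scale giving drift $-\gamma\,\mathrm{sgn}$; here I would use that the radial part of $3$-dimensional dBM with parameter $\gamma$ is itself a Brownian motion with drift $-\gamma$ (this is essentially in \cite{CM}, \cite{CKMV1}, \cite{FL17}), so the $E_1$-side radial process and the $E_2$-side process match up exactly, making the glued process symmetric. Then I would establish the ``independent coin flip'' claim: on the event that $M$ has visited $\b0$ by time $t$, the endpoint $M_t$ lies on $E_1$ or $E_2$ according to an independent Bernoulli$(1/2)$ (by symmetry of $R$ under reflection, combined with the strong Markov property at $\sigma_{\b0}$ and the fact that each of $E_1$, $E_2$ ``absorbs'' half the mass by symmetry). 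On the complementary event $\{\sigma_{\b0} > t\}$, the process has never left its starting component, so it is just the part process of (the appropriate-dimensional) dBM killed at $\b0$.

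With these two ingredients, each case of the theorem follows by bookkeeping. For $x,y \in E_1\setminus\{\b0\}$: split according to $\sigma_{\b0} \le t$ or not. On $\{\sigma_{\b0}>t\}$ we get the transition density of $3$-dimensional dBM killed at $\b0$, which I will denote $q(t,x,y)$ (formula \eqref{EQQTX}); on $\{\sigma_{\b0}\le t\}$ the radial endpoint $|y|$ is reached with the reflected-drift density $p^{\wh Y}$, but we must subtract off the piece where the path reached $|y|$ without crossing $0$, i.e.\ $p^Y_{\IR_+}$, and then multiply by $\tfrac12$ for the coin flip landing on $E_1$ — giving $q(t,x,y) + \tfrac12\big(p^{\wh Y}(t,|x|,|y|) - p^Y_{\IR_+}(t,|x|,|y|)\big)$. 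For $x,y \in E_2\setminus\{\b0\}$: on $\{\sigma_{\b0}>t\}$ we get $p_{E_2\setminus\{\b0\}}(t,x,y)$ (the drifted BM on the half-line killed at $0$, formula \eqref{density-p_E2}); on $\{\sigma_{\b0}\le t\}$ we get $\tfrac12\,p^{\wh Y}(t,|x|,|y|)$ — wait, here one checks that $p^{\wh Y} - (\text{killed part on the }E_2\text{ side})$ already \emph{is} $p_{E_2\setminus\{\b0\}}$, so adding the $\tfrac12 p^{\wh Y}$ term and the $\tfrac12 p_{E_2\setminus\{\b0\}}$ term recombines correctly; the stated formula $\tfrac12(p_{E_2\setminus\{\b0\}} + p^{\wh Y})$ is what results. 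Case (iii), $x \in E_1$, $y\in E_2$: here $M$ \emph{must} pass through $\b0$, so only the $\{\sigma_{\b0}\le t\}$ term survives, giving $\tfrac12(p^{\wh Y} - p^Y_{\IR_+})$. Case (iv), $x = \b0$: starting at the origin, $\sigma_{\b0} = 0$, so $|M_t|$ has the pure reflected-drift law $p^{\wh Y}(t,0,|y|)$; integrating the resolvent/spectral representation of $p^{\wh Y}(t,0,\cdot)$ against the speed measure $m_\gamma$ and doing the explicit Laplace/Fourier inversion produces the closed form with the $\int_0^\infty \frac{s e^{-s^2 t/2}}{s^2+\gamma^2}(\cdots)\,ds$ integral; the leading $\tfrac12$ is the total $m_\gamma$-mass normalization artifact, and the $e^{\gamma|y|}$ is the Radon--Nikodym factor converting the drifted-BM density into an $m_\gamma$-density.

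The main obstacle I anticipate is making the ``re-randomization at $\b0$ is an independent fair coin'' step airtight — i.e.\ proving that the excursion measure of $R$ away from $0$ is symmetric and that, at each passage through $\b0$, the process chooses $E_1$ versus $E_2$ with probability exactly $1/2$ \emph{and} independently of the radial magnitude. This requires either an excursion-theoretic argument (Itô measure of excursions of $|M|$ from $\b0$, showing it splits evenly between ``into $E_1$'' and ``into $E_2$'' excursions) or a Dirichlet-form symmetry argument (an involution of $E$ swapping the ``sign'' that preserves the form when the $\gamma$'s agree, hence preserves the law of $M$ started at $\b0$, forcing the $\tfrac12$). A secondary technical point is verifying the explicit identities that tie $p^{\wh Y} - p^Y_{\IR_+}$ and $p^{\wh Y} - (\text{killed }E_2\text{ part})$ to the named densities $q$ and $p_{E_2\setminus\{\b0\}}$ — these are routine reflection-principle/image-method computations for Brownian motion with constant drift, but one must track the Radon--Nikodym factors relating the natural density on $\IR^3$ (resp.\ $\IR_+$) to the density against $m_\gamma$, since the statement is phrased with respect to $m_\gamma$. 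Finally, the spectral computation in (iv) requires knowing the explicit transition density (or resolvent) of Brownian motion with drift $-\gamma$ reflected at $0$ and carrying out the contour/residue evaluation; the pole at $s = \gamma$ in the integrand (cancelled against $s^2+\gamma^2$ structure) is the delicate part of that computation.
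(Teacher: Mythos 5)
Your proposal is correct and follows essentially the same route as the paper: symmetrize via the signed radial process $Y$, identify $|Y|$ as reflected Brownian motion with drift $-\gamma$ (Linetsky's density $p^{\wh Y}$), decompose at $\sigma_{\{\b0\}}$, and extract the factor $\tfrac12$ from the reflection symmetry of $Y$; the case-by-case bookkeeping you give matches the paper's. The one place where you anticipate heavier machinery than is needed is the ``independent fair coin at each visit to $\b0$'' step: the paper needs no excursion theory and no statement about re-randomization at every passage (which would be delicate, since the zero set of $|Y|$ is uncountable). It only needs the fixed-$t$ identity $\wh p^{Y}(t,|x|,|y|;\sigma_{\{0\}}\le t)=\wh p^{Y}(t,|x|,-|y|)$, obtained from the first-passage decomposition $\wh p^{Y}(t,|x|,\pm|y|;\sigma_{\{0\}}\le t)=\int_0^t\IP_{|x|}(\sigma_{\{0\}}\in ds)\,\wh p^{Y}(t-s,0,\pm|y|)$ together with $Y\overset{d}{=}-Y$ (which falls out of the SDE $dY_t=dB_t-\gamma\,\mathrm{sgn}(Y_t)\,dt$ derived from the Dirichlet form of $M$); combined with the elementary splitting $\wh p^{\wh Y}(t,|x|,|y|)=\wh p^{Y}(t,|x|,|y|)+\wh p^{Y}(t,|x|,-|y|)$ this yields the key identity $\wh p^{Y}(t,|x|,-|y|)=\tfrac12\bigl(\wh p^{\wh Y}(t,|x|,|y|)-\wh p^{Y}_{\IR_+}(t,|x|,|y|)\bigr)$ directly, which is your coin flip in rigorous form. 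Two smaller points: the killed $E_1$-density $q$ is obtained by an $h$-transform taking the killed $3$-dimensional dBM to Brownian motion killed at rate $\gamma^2/2$ (there is no image term, since $\{0\}$ is polar for $3$-dimensional Brownian motion), so it is not a reflection-principle computation; and case (iv) requires no separate spectral or residue computation --- the stated formula is just $\tfrac12\,p^{\wh Y}(t,0,|y|)$ with $x=0$ substituted into Linetsky's formula, which the paper reaches by letting $x_n\to\b0$ along $E_2$ and invoking joint continuity of $p^{Y}$; your ``$\sigma_{\{\b0\}}=0$'' shortcut needs exactly that continuity (or some substitute) to identify the density at the starting point $\b0$, which carries positive $m_\gamma$-capacity but zero $m_\gamma$-measure.
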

\begin{remark}
\begin{description}
\item{(i)} $q(t,x,y)$ denotes the transition density  of the part process of $M$  on $E_1\backslash \{\b0\}$ killed upon hitting $\b0$, with respect to $m_\gamma$.
\item{(ii)} $Y$  is the signed radial process  of $M$, defined at the beginning of Section \ref{S:3.2}. $p^Y(t,x,y)$ is the density of $Y$  with respect to the measure  $\wt{\mathfrak{m}}$ characterized in \eqref{def-m-tilde}. In fact, $\wt{\mathfrak{m}}$ is the symmetrizing  measure  for   $Y$.
\item{(iii)}  $\wh{Y}:=|M| = |Y|$, i.e., the ``unsigned" radial process.  With respect to the measure  $m^{(3)}$, $p^{\wh{Y}}(t,x,y)$ is the density of $\wh{Y}$.
\item{(iv)} $p_{E_2\backslash \{\b0\}}(t,x,y)$ is the density of the part process of $M$ on $E_2$  with respect to $m_\gamma^2$.
\item{(v)} Since $p(t, x, y)$ is symmetric in $(x, y)$ with respect to  $m_\gamma$, the three cases (i)-(iii) essentially  cover all the cases for $x,y\in E$.
\end{description}
\end{remark}

The rest of this paper is organized as follows: In Section \ref{SEC2-1} we give a background on 3-dimensional distorted Brownian motion. Most of the results in Section \ref{SEC2-2} can be found in \cite{FL17}. In Section \ref{SEC2-2}, we introduce the definition of dBM with varying dimension as well as its Dirichlet form characterizations. In Section \ref{SEC3-1}, we show that for dBM with varying dimension,  the intersection of the two components $E_1$ and $E_2$, $\b0$ has positive capacity. Furthermore, $\b0$ can be visited in finte times with probability one starting from  everywhere. The signed radial process of dBM with varying dimension is characterized by an SDE  in Section \ref{S:3.2}. As a  quick corollary, the signed radial process is symmetric about 0. Finally, the proof to Theorem \ref{main-thm} is presented in Section \ref{Proof-HK}. For the purpose of  better organizing this proof, we present the result of each of (i)-(iv) in Theorem \ref{main-thm} as a separate proposition in Section \ref{Proof-HK}.

\section{Preliminary}\label{Sec-prelim}

In this section, we give an introductory overview on distorted Brownian motion and dBM with varying dimension. Most the results in this section can be found in \cite{FL17}.

\subsection{$3$-dimensional distorted BM}\label{SEC2-1}

This is a summary of \cite{FL17}. Fix 
\begin{equation}\label{EQ1PGX}
\psi_\gamma(x):=\sqrt{\frac{\gamma}{2\pi}}\cdot \frac{\mathrm{e}^{-\gamma|x|}}{|x|},\quad x\in \mathbb{R}^3.
\end{equation}
Note that $\int \psi_\gamma(x)^2dx=1$. Set the following measure on $\IR^3$: 
 \begin{equation}\label{def-m3}
 \mathfrak{m}^{(3)}(dx):=\psi_\gamma(x)^2dx.
 \end{equation}
 We define an energy form on $L^2(\mathbb{R}^3,\mathfrak{m}^{(3)})$ as follows:
\begin{equation*}
\left\{
\begin{aligned}
&\mathcal{F}^{(3)}:=\left\{f\in L^2(\mathbb{R}^3,m^{(3)}): \nabla f\in L^2(\mathbb{R}^3,\mathfrak{m}^{(3)})\right\}, \\
&\mathcal{E}^{(3)}(f,g):=\frac{1}{2}\int_{\mathbb{R}^3}\nabla f(x)\cdot \nabla g(x)\mathfrak{m}^{(3)}(dx),\quad f,g\in \mathcal{F}^{(3)}. 
\end{aligned}
\right.
\end{equation*}
The next theorem includes  some facts about $(\mathcal{E}^{(3)},\mathcal{F}^{(3)})$, in which (ii)  is critical for us to construct dBM with varying dimension. 

\begin{theorem}[Cf. \cite{FL17}]\label{THM1}
The following statements hold:
\begin{itemize}
\item[(i)] $(\mathcal{E}^{(3)},\mathcal{F}^{(3)})$ is a regular Dirichlet form on $L^2(\mathbb{R}^3, \mathfrak{m}^{(3)})$ with $C_c^\infty(\mathbb{R}^3)$ being its special standard core. Denote its associated Markov process by $X^{(3)}=(X^{(3)}_t)_{t\ge 0}$. 
\item[(ii)] Any singleton $x\neq 0$ in $\IR^3$ is $\mathcal{E}^{(3)}$-polar, but $0\in \mathbb{R}^3$ is of positive capacity. 
\item[(iii)] $(\mathcal{E}^{(3)},\mathcal{F}^{(3)})$ or $X^{(3)}$ is recurrent, conservative and irreducible. Particularly, $\mathfrak{m}^{(3)}$ is an invariant measure of $X^{(3)}$. 
\end{itemize}
\end{theorem}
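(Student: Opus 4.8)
The three assertions split into an analytic statement about the form (regularity, (i)), two capacity computations (polarity/positive capacity, (ii)), and global path properties (recurrence, conservativeness, irreducibility, invariance, (iii)). The unifying object is the weight $\rho(x):=\psi_\gamma(x)^2=\frac{\gamma}{2\pi}e^{-2\gamma|x|}|x|^{-2}$, which is smooth and strictly positive off the origin, blows up like $|x|^{-2}$ at $0$, and decays exponentially at infinity; note in particular that $\mathfrak{m}^{(3)}(\IR^3)=\int\psi_\gamma^2\,dx=1$, so $\mathfrak{m}^{(3)}$ is a probability measure of full support, a fact I will use repeatedly.

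\smallskip
\noindent\textbf{Part (i).} The Markovian (unit-contraction) property is immediate for a gradient form: if $f\in\mathcal{F}^{(3)}$ and $f^\#=(0\vee f)\wedge1$, then $\nabla f^\#=\nabla f\cdot\mathbf{1}_{\{0<f<1\}}$ a.e., so $\mathcal{E}^{(3)}(f^\#,f^\#)\le\mathcal{E}^{(3)}(f,f)$. For closedness and for identifying $C_c^\infty(\IR^3)$ as a core I would invoke the Muckenhoupt theory of weighted Sobolev spaces, the key point being that $\rho$ is a (locally) $A_2$ weight on $\IR^3$: near the origin $\rho\sim|x|^{-2}$, and $|x|^{-\alpha}$ is $A_2$ on $\IR^n$ when $0\le\alpha<n$, which holds here since $2<3$; away from the origin $\rho$ is bounded above and below on compacts, and the exponential factor only aids integrability. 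For an $A_2$ weight the form $\frac12\int\nabla f\cdot\nabla g\,\rho\,dx$ is closed on the maximal domain $\mathcal{F}^{(3)}$, smooth compactly supported functions are $\mathcal{E}^{(3)}_1$-dense in it (the Meyers--Serrin / Fabes--Kenig--Serapioni $H=W$ theorem) and trivially uniformly dense in $C_c(\IR^3)$; hence $(\mathcal{E}^{(3)},\mathcal{F}^{(3)})$ is regular with special standard core $C_c^\infty(\IR^3)$. The finiteness of $\mathfrak{m}^{(3)}$ makes the cut-off at infinity costless, so only the origin is delicate.

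\smallskip
\noindent\textbf{Part (ii).} For $x_0\ne0$, choose a small ball on which $0<c_1\le\rho\le c_2$; there $\mathcal{E}^{(3)}(u,u)\le c_2\cdot\tfrac12\int|\nabla u|^2\,dx$, so the $\mathcal{E}^{(3)}$-capacity of $\{x_0\}$ is dominated by its classical Newtonian capacity in $\IR^3$, which vanishes, whence $\{x_0\}$ is $\mathcal{E}^{(3)}$-polar. The origin is the reverse: since $\rho$ and the form are rotation invariant, the equilibrium potential of $\{0\}$ may be taken radial, and for radial $u=u(r)$ one computes $\mathcal{E}^{(3)}(u,u)=\gamma\int_0^\infty u'(r)^2e^{-2\gamma r}\,dr$ and $\|u\|_{L^2(\mathfrak{m}^{(3)})}^2=2\gamma\int_0^\infty u(r)^2e^{-2\gamma r}\,dr$. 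The singular weight $|x|^{-2}$ exactly cancels the volume element $r^2\,dr$, reducing the capacity problem to a one-dimensional weighted problem on $[0,\infty)$ with weight $e^{-2\gamma r}$ bounded near $r=0$, where a boundary point carries positive capacity. Equivalently, on radial functions the generator is $\tfrac12\partial_r^2-\gamma\partial_r$, a Brownian motion with constant inward drift, whose scale function $s(r)=\frac{1}{2\gamma}e^{2\gamma r}$ is finite at $0$ and whose speed measure $2e^{-2\gamma r}\,dr$ is finite near $0$; by Feller's test $\int_0^{r_0}(s(r_0)-s(r))\,m(dr)<\infty$, so $0$ is an accessible boundary. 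Thus $\{0\}$ is reached with positive probability and $\mathrm{Cap}(\{0\})>0$.

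\smallskip
\noindent\textbf{Part (iii) and the main obstacle.} Here finiteness of $\mathfrak{m}^{(3)}$ is decisive: $1\in L^2(\IR^3,\mathfrak{m}^{(3)})$ and $\nabla1=0$, so $1\in\mathcal{F}^{(3)}$ with $\mathcal{E}^{(3)}(1,1)=0$; by the recurrence criterion ($1$ in the extended Dirichlet space with zero energy) the form is recurrent, recurrence forces conservativeness ($P_t1=1$), and then symmetry gives invariance, $\int P_tf\,d\mathfrak{m}^{(3)}=\int fP_t1\,d\mathfrak{m}^{(3)}=\int f\,d\mathfrak{m}^{(3)}$. Irreducibility follows from strong locality and connectedness of $\IR^3$: $\mathcal{E}^{(3)}(u,u)=0$ forces $\nabla u=0$ a.e.\ (as $\rho>0$ a.e.), hence $u$ constant, leaving no nontrivial invariant sets. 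The routine part is (iii); the two genuinely delicate points, both caused by the $|x|^{-2}$ singularity at the origin, are the $\mathcal{E}^{(3)}_1$-density of $C_c^\infty$ in (i) and the positivity of $\mathrm{Cap}(\{0\})$ in (ii). I expect the capacity computation to be the crux, as it is precisely the mechanism by which the strong inward distortion renders the otherwise-polar origin non-polar, thereby permitting the gluing with the one-dimensional component.
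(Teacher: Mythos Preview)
The paper does not prove this theorem; it is stated with the attribution ``Cf.\ \cite{FL17}'' and no argument is given, since Section~\ref{SEC2-1} is explicitly a summary of results imported from Fitzsimmons--Li. So there is no proof in the paper to compare against, and your proposal is supplying content the paper deliberately outsources.

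That said, your sketch is sound and would succeed with minor tightening. Part~(iii) is exactly right: $\mathfrak{m}^{(3)}$ is a probability measure, so $1\in\mathcal{F}^{(3)}$ with zero energy, and recurrence, conservativeness, invariance, and irreducibility follow by the standard chain you indicate. Part~(ii) is also correct; the radial reduction is the right move, and your observation that the $|x|^{-2}$ singularity exactly cancels the volume element $4\pi r^2\,dr$ to leave a one-dimensional problem with weight $2\gamma e^{-2\gamma r}$ is the key computation (and matches the paper's later description of the radial part in \eqref{EQ1LUG}--\eqref{radial-3d-dbm}). For part~(i), your invocation of $A_2$ theory is a legitimate route, but be slightly careful: the exponential factor makes $\rho$ fail global $A_2$, so you are really using that $\rho$ is locally $A_2$ near the origin (where $|x|^{-2}\in A_2(\IR^3)$ since $2<3$) together with the fact that $\rho$ is smooth and bounded above and below on any compact set away from~$0$; the cutoff at infinity then works because $\mathfrak{m}^{(3)}$ is finite, exactly as you say. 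Closedness can also be checked directly from $\rho>0$ a.e.\ and local boundedness below on compacts, without appealing to the full Fabes--Kenig--Serapioni machinery. Your identification of the two delicate points (density of $C_c^\infty$ across the singularity, and positivity of $\mathrm{Cap}(\{0\})$) is accurate, and your handling of both is correct in outline.
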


The diffusion $X^{(3)}$ is called a $3$-dimensional  distorted Brownian motion. In the following we provide more detailed description to it. The third assertion of Theorem~\ref{THM1} states that $X^{(3)}$ is irreducible recurrent. This implies (by \cite[Theorem~4.7.1]{FOT})
\[
	\IP_x\left(\sigma_{\{0\}}<\infty\right)=1,\quad \text{for q.e. }x\in \IR^3. 
\]
Particularly, $\IP_0\left(\sigma_{\{0\}}<\infty\right)=1$. In fact, we have  $\IP_0\left(\sigma_{\{0\}}=0\right)=1$.  This means $0$ is a regular point. 
Heuristically speaking, $X^{(3)}$ behaves like a one-dimensional Brownian motion near $0$. 

Next we give some remarks on the rotational invariance of $X^{(3)}$, some of which will be used later in this paper. These results can be found in \cite[Section 3]{FL17}.
\begin{description}
\item{(i)} $X^{(3)}$ is isotropic in the sense that if $T:\mathbb{R}^3\rightarrow \mathbb{R}^3$ is isotropic (i.e. $T: x=(r, \theta, \varphi)\mapsto (r, \theta+\theta_0, \varphi+\varphi_0)$ for some given $\theta_0$ and $\varphi_0$), then $X^{(3)}$ and $T(X^{(3)})$ are equivalent (i.e. they share the same Dirichlet form). 
\item{(ii)} It holds
\[
	X^{(3)}_t=(r_t, \vartheta_{A_t}),\quad t\geq 0, 
\]
where $r_t$ is a diffusion on $[0,\infty)$, $\vartheta$ is a BM on $S^2$ and $A_t$ is a PCAF of $(r_t)$. The Revuz   measure of $A$ is $\mu_A(du)=l(du)/u^2$ ($l$ is given in  \eqref{EQ1LUG}). 
\item{(iii)} The radial part $r_t$ of $X^{(3)}$ is a diffusion reflected at $0$ with the scale function 
\[
	\ts(u)=\frac{1}{4\gamma^2}\mathrm{e}^{2\gamma u},\quad u\in [0,\infty)
\]
and speed measure 
\begin{equation}\label{EQ1LUG}
l(du)=2\gamma\mathrm{e}^{-2\gamma u}du. 
\end{equation}
\item{(iv)} $r_t$ satisfies
\begin{equation}\label{radial-3d-dbm}
	r_t-r_0=\beta_t-\gamma t+\pi \gamma \cdot \mathfrak{l}^0_t,
\end{equation}
where $\beta_t$ is a one-dimensional Brownian motion  and $\mathfrak{l}^0$ is the local time of $r$ in the sense of Revuz measure at $0$. 
\end{description}

\subsection{Distorted Brownian motion  on spaces with varying dimension}\label{SEC2-2}

In this subsection, we rigorously give the definition for  distorted Brownian motion with varying dimension $M$ on the state space $E$. Recall that we have mentioned in Section \ref{Sec-intro} that $E=E_1\cup E_2$. The restriction of dBM with varying dimension $M$ on $E_1$ is induced by  the $3$-dimensional distorted Brownian motion $X^{(3)}$ defined  in Section \ref{SEC2-1}. 
Set the inclusion 
\[
	\iota_1: \mathbb{R}^3\rightarrow E_1, \quad x\mapsto (x, 0). 
\]
Then we define $m^1_\gamma:=\mathfrak{m}^{(3)}\circ \iota_1^{-1}$ as a measure on $E_1$, where $\mathfrak{m}^{(3)}$ is defined in \eqref{def-m3}. Thus $\iota_1(X^{(3)})$ is a distorted Brownian motion on $E_1$ associated with the following  Dirichlet form $(\sF^1, \sE^1)$ on $L^2(E_1,m^1_\gamma)$.
\begin{equation*}
\left\{
\begin{aligned}
	&\sF^1:=\{f: f\circ \iota_1\in \mathcal{F}^{(3)}\}, \\
	&\sE^1(f,g):=\mathcal{E}^{(3)}(f\circ \iota_1, g\circ \iota_1),\quad f,g\in \sF^1. 
\end{aligned}
\right.
\end{equation*}
To introduce the behavior of $M$ on the one-dimensional part $E_2$, for  $\gamma >0$ we first set
\begin{equation}\label{def-phi_gamma}
\phi_\gamma (u):=\sqrt{2\gamma} e^{-\gamma u}, \quad \text{for } u\in \IR_+\cup \{0\}.
\end{equation}
Now define the following measure on $\IR_+\cup \{0\}$:
$$
\mathfrak{m}^{(+)}(du):=\phi_\gamma(u)^2du=2\gamma\mathrm{e}^{-2\gamma u}du.$$
We now consider the following Dirichlet form on $L^2(\mathbb{R}_+\cup \{0\}, \mathfrak{m}^{(+)})$:
\begin{equation}\label{def-EE+}
\left\{
\begin{aligned}
	&\mathcal{F}^{(+)}:=\left\{f\in L^2(\mathbb{R}_+\cup \{0\}, \mathfrak{m}^{(+)}):  f'\in L^2(\mathbb{R}_+,\mathfrak{m}^{(+)}),\, f \text{ absolutely continuous on }[0, +\infty)\right\},  \\
	&\mathcal{E}^{(+)}(f,g):=\frac{1}{2}\int_{\mathbb{R}_+} \nabla f(u)\nabla g(u)\mathfrak{m}^{(+)}(du),\quad f,g\in \mathcal{F}^2,
\end{aligned}
\right.
\end{equation}
Denote the diffusion associated with $(\mathcal{F}^{(+)}, \mathcal{E}^{(+)})$  by $X^{(+)}$.  
Set the inclusion
\[
	\iota_2: \mathbb{R}_+\cup \{0\}\rightarrow E_2,\quad u\mapsto (0,0,0, u). 
\]
Then $\iota_2(X^{(+)})$ is a diffusion on $E_2$ associated with the following Dirichlet form  $(\sF^2, \sE^2)$ on $L^2(E_2, m^2_\gamma)$ where $m^2_\gamma:=\mathfrak{m}^{(+)}\circ \iota_2^{-1}$:
\begin{equation*}
\left\{
\begin{aligned}
	&\sF^2:=\{f: f\circ \iota_2\in \mathcal{F}^{(+)}\}, \\
	&\sE^2(f,g):=\mathcal{E}^{(+)}(f\circ \iota_2, g\circ \iota_2),\quad f,g\in \sF^2.
\end{aligned}
\right.
\end{equation*}
Now we are ready to introduce the defintion of dBM with varying dimension, as well as its associated Dirichlet form. We  note that by the definition of $m_\gamma$ in \eqref{definition-m}, it is easy to see that $m_\gamma|_{E_i}=m_\gamma^i$, for $i=1,2$. The definition and explanation of ``quasi-continuous" can be found, e.g., in \cite[Definition 1.2.12, Theorem 2.3.4]{CF}.
\begin{proposition}\label{THM2}
Let 
\begin{equation}\label{dirichletform-dbm-1}
\left\{
\begin{aligned}
&\sF:=\left\{ f\in L^2(E,m_\gamma): f|_{E_1}\in \sF^1, f|_{E_2}\in \sF^2, \widetilde{f|_{E_1}}(\b0)=\widetilde{f|_{E_2}}(\b0) \right\}, 
\\
&\sE(f,g):=\sE^1(f|_{E_1}, g|_{E_1})+\sE^2(f|_{E_2}, g|_{E_2}),\quad f,g\in \sF. 
\end{aligned}
\right.
\end{equation}
In \eqref{dirichletform-dbm-1}, $\widetilde{f|_{E_i}}$ is the $\sE^i$-quasi-continuous version of $f|_{E_i}$. Then $(\sE,\sF)$ is a  strongly local regular Dirichlet form on $L^2(E,m_\gamma)$. Therefore there exists a unique diffusion process  associated with it.
\end{proposition}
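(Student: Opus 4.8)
The plan is to verify in turn that $(\sE,\sF)$ is (a) a densely defined, closed, symmetric, Markovian form on $L^2(E,m_\gamma)$, (b) regular, and (c) strongly local, and then to invoke the standard correspondence between strongly local regular Dirichlet forms and diffusion processes (cf.\ \cite[Ch.~7]{FOT}, \cite{CF}) to produce $M$ together with its uniqueness. Note first that the matching condition in \eqref{dirichletform-dbm-1} is unambiguous from the outset: each $(\sE^i,\sF^i)$ is (the $\iota_i$-image of) a regular Dirichlet form, so $f|_{E_i}$ has an $\sE^i$-quasi-continuous version $\widetilde{f|_{E_i}}$, and since $\b0$ has strictly positive $\sE^i$-capacity — for $i=1$ by Theorem~\ref{THM1}(ii), for $i=2$ because a one-dimensional reflected diffusion hits points — the value $\widetilde{f|_{E_i}}(\b0)$ is genuinely determined. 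Symmetry of $\sE$ is inherited from $\sE^1,\sE^2$. For density, the smooth functions compactly supported in $E_1\setminus\{\b0\}$ paired with those compactly supported in $E_2\setminus\{\b0\}$ already lie in $\sF$ (they vanish near $\b0$, so the matching condition is trivial) and are dense in $L^2(E,m_\gamma)=L^2(E_1,m^1_\gamma)\oplus L^2(E_2,m^2_\gamma)$. For closedness, if $(f_n)$ is $\sE_1$-Cauchy then, using $\sE_1(h,h)=\sE^1_1(h|_{E_1},h|_{E_1})+\sE^2_1(h|_{E_2},h|_{E_2})$ for $h\in\sF$, each restriction $(f_n|_{E_i})$ is $\sE^i_1$-Cauchy and converges in $\sF^i$ to some $g_i$; along a subsequence $\widetilde{f_n|_{E_i}}\to\widetilde{g_i}$ $\sE^i$-q.e., hence (positive capacity of $\b0$ again) at $\b0$, so $\widetilde{g_1}(\b0)=\lim_n\widetilde{f_n|_{E_1}}(\b0)=\lim_n\widetilde{f_n|_{E_2}}(\b0)=\widetilde{g_2}(\b0)$ and the glued limit lies in $\sF$. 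The Markovian property is componentwise: the unit contraction operates on $\sF^1$ and $\sF^2$ and commutes with passage to quasi-continuous versions, hence preserves the matching condition. Strong locality is likewise local: $\sE^1,\sE^2$ are strongly local (pure Dirichlet integrals, no killing or jumps), and if $g$ is constant on a neighbourhood of $\mathrm{supp}[f]$ in $E$ then $g|_{E_i}$ is constant near $\mathrm{supp}[f|_{E_i}]$ in $E_i$, so $\sE(f,g)=\sE^1(f|_{E_1},g|_{E_1})+\sE^2(f|_{E_2},g|_{E_2})=0$.

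The real content, and the step I expect to demand the most care, is regularity. I would introduce
\[
\mathcal C:=\bigl\{f\in C_c(E):\ f|_{E_1}\circ\iota_1\in C_c^\infty(\IR^3)\ \text{and}\ f|_{E_2}\circ\iota_2\in C_c^\infty([0,\infty))\bigr\},
\]
observing that such an $f$ is smooth on each component, so $\widetilde{f|_{E_i}}(\b0)=f(\b0)$ for both $i$ and the matching condition is automatic, and that by Theorem~\ref{THM1}(i) together with the description of $\mathcal F^{(+)}$ the two components lie in cores of $\mathcal E^{(3)}$ and $\mathcal E^{(+)}$; hence $\mathcal C\subset\sF\cap C_c(E)$. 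Fix once and for all a $\chi\in\mathcal C$ with $\chi\equiv1$ on a neighbourhood of $\b0$ (a smooth bump on each component, each equal to $1$ at the origin). For uniform density: given $u\in C_c(E)$ with $c:=u(\b0)$, the function $u-c\chi$ vanishes at $\b0$, and multiplying it by a cutoff that annihilates a shrinking neighbourhood of $\b0$ and then mollifying on each component yields elements of $\mathcal C$ converging to $u-c\chi$ uniformly; adding back $c\chi\in\mathcal C$ shows $\mathcal C$ is dense in $C_c(E)$. For $\sE_1$-density: given $f\in\sF$ with $c:=\widetilde{f|_{E_1}}(\b0)=\widetilde{f|_{E_2}}(\b0)$, the function $f-c\chi$ has $\sE^i$-quasi-continuous components vanishing at $\b0$, i.e.\ $(f-c\chi)|_{E_i}$ lies in the domain of the \emph{part} Dirichlet form of $\sE^i$ on $E_i\setminus\{\b0\}$; since the part of a regular Dirichlet form on an open set is regular (cf.\ \cite[Thm.~4.4.3]{FOT}) — here with cores $C_c^\infty(\IR^3\setminus\{0\})$ and $C_c^\infty((0,\infty))$, which follows from $C_c^\infty(\IR^3)$ being a special standard core for $\mathcal E^{(3)}$ and the analogous fact for $\mathcal E^{(+)}$ — I can approximate each component in $\sE^i_1$-norm by smooth functions supported away from $\b0$, glue them into an element of $\mathcal C$, and add back $c\chi$; this approximates $f$ in $\sE_1$-norm. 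Hence $\mathcal C$ is a core and $(\sE,\sF)$ is regular.

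Granting (a)--(c), $(\sE,\sF)$ is a strongly local regular Dirichlet form on $L^2(E,m_\gamma)$, so by the general theory it is the Dirichlet form of an $m_\gamma$-symmetric diffusion on $E$, unique up to the usual equivalence; this is the process $M$. The single genuine obstacle is, as flagged, the regularity argument, whose crux is the interplay between the pointwise matching at $\b0$ and the positivity of the $\sE^i$-capacity of $\b0$: the latter is exactly what lets the part-form approximations on $E_1\setminus\{\b0\}$ and $E_2\setminus\{\b0\}$ be reglued into continuous functions on $E$. The whole scheme parallels the construction of Brownian motion with varying dimension in \cite{CL}.
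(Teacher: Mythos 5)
Your proof is correct, and its skeleton --- identify the glued smooth class (the paper's $\sC$ of \eqref{EQ3CFF}) as the candidate core and verify uniform density and $\sE_1$-density separately --- matches the paper's; but both density checks are carried out by genuinely different mechanisms. For uniform density the paper checks that $\sC$ is an algebra separating points and invokes the Stone--Weierstrass theorem, whereas you subtract $u(\b0)\chi$ and approximate the remainder by cutoff-and-mollification on each component; both work. The more substantive divergence is in the $\sE_1$-density: the paper approximates $f|_{E_i}$ by core functions $h_{i,\varepsilon}\in\sC^i$ that are simultaneously close in $\sE^i_1$-norm and at the point $\b0$ (possible because $\b0$ has positive $\sE^i$-capacity, so $\sE^i_1$-convergent sequences have subsequences converging at $\b0$), and then restores the matching condition by adding $\left(f(\b0)-h_{i,\varepsilon}(\b0)\right)g_i$ for a fixed $g_i\in\sC^i$ with $g_i(\b0)=1$. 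You instead subtract $c\chi$ so that each component's quasi-continuous version vanishes at $\b0$, recognize $(f-c\chi)|_{E_i}$ as an element of the part Dirichlet form on $E_i\setminus\{\b0\}$, and use the regularity of the part form, with cores $C_c^\infty(\IR^3\setminus\{0\})$ and $C_c^\infty(\IR_+)$, to approximate by functions supported away from $\b0$ before regluing. Nothing extra is being assumed --- the paper itself relies on exactly these part-form cores later, in Proposition \ref{HK-killed-dBM} and the lemma for $p_{E_2\backslash\{\b0\}}$ --- and your route gives a cleaner conceptual reduction to the punctured components, at the cost of invoking \cite[Theorem 4.4.3]{FOT} where the paper gets by with a one-line pointwise correction. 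You are also more explicit than the paper about closedness (passing the matching condition to $\sE_1$-limits via q.e.\ convergence and the positive capacity of $\b0$), Markovianity, and strong locality, all of which the paper declares obvious; those verifications are sound as written.
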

\begin{proof}
Clearly, $(\sE,\sF)$ is a symmetric bilinear form satisfying  Markovian property. The strong locality of $(\sE,\sF)$ is indicated by that of $(\sE^1,\sF^1)$ and $(\sE^2,\sF^2)$.  It remains to prove the regularity of $(\sE,\sF)$. Take $\sC^1:=C_c^\infty(\bR^3)\circ \iota_1^{-1}$ and   $\sC^2:=C_c^\infty(\bR_+)\circ \iota_2^{-1}$. In view of \cite[Theorem 2.1]{FL17}, we know $\sC^i$ is a special standard core of $(\sE^i, \sF^i)$, for $i=1,2$.   Set
\begin{equation}\label{EQ3CFF}
	\mathscr{C}:=\{f\in \sF: \,  f|_{E_1}\in \sC^1,\, f|_{E_2}\in  \sC^2\}.
\end{equation}
On account of \cite[Lemma 1.3.12]{CF}, to prove the regularity of $(\sE, \sF)$, it suffices to show $\sC$ is dense in $C_c(E)$ relative to the uniform norm and dense in $\sF$ relative to the $\sE_1$-norm. $\sC$ is clearly an algebra, i.e. $f,g\in \sC$ implies $c_1\cdot f+c_2 \cdot g, f\cdot g\in \sC$ for any constants $c_1,c_2$. To show $\sC$ separates points in $E$, without loss of generality, we consider $x\in E_2,\, y\in E_1\setminus \{\b0\}$. Since $\sC^2$ is a special standard core of $(\sE^2,\sF^2)$, there exists a function $f_2\in \sC^2$ such that $f_2(\b0)=f_2(x)=1$. Another function $f_1\in \sC^1$ can be taken to separate $\b0$ and $y$, i.e. $f_1(\b0)\neq f_1(y)$. Define a function $f$ on $E$ by
\[
 f|_{E_1}:=f_1, \quad f|_{E_2}:= f_1(\b0)\cdot f_2.
\]
Then $f\in \sC$ and $f(x)=f_1(\b0)\cdot f_2(x)=f_1(\b0)\neq f_1(y)=f(y)$. Thus by the Stone-Weierstrass theorem, $\sC$ is dense in $C_c(E)$ relative to the uniform norm. On the other hand, to claim $\sC$ is dense in $\sF$ relative to the $\sE_1$-norm, we   fix $f\in \sF$ and a small constant $\varepsilon>0$. For $i=1,2$, take $g_i\in \sC^i$ with $g_i(\b0)=1$. Let $C_1:=\|g_1\|_{\sE^1_1}$ and $C_2:=\|g_2\|_{\sE^2_1}$. By \cite[Theorem~2.1.4]{FOT}, there exist  $h_{1,\varepsilon} \in \sC^1$ and $h_{2,\varepsilon}\in \sC^2$ such that 
\[
\begin{aligned}
&\|h_{1,\varepsilon}-f|_{E_1}\|_{\sE^1_1}<\varepsilon/4, \quad |h_{1,\varepsilon}(\b0)-f(\b0)|<\frac{\varepsilon}{4C_1}, \\
&\|h_{2,\varepsilon}-f|_{E_2}\|_{\sE^2_1}<\varepsilon/4, \quad |h_{2,\varepsilon}(\b0)-f(\b0)|<\frac{\varepsilon}{4C_2};  
\end{aligned}
\]
Define a function $f_\varepsilon$ on $E$ by
\[
\begin{aligned}
f_\varepsilon|_{E_1}:=h_{1,\varepsilon}+ \left(f(\b0)-h_{1,\varepsilon}(\b0) \right)\cdot g_1, \quad f_\varepsilon|_{E_2}:=h_{2,\varepsilon}+ \left(f(\b0)-h_{2,\varepsilon}(\b0) \right)\cdot g_2.
\end{aligned}
\]
Then $f_\varepsilon\in \sC$ and 
\begin{eqnarray*}
&&\|f_\varepsilon-f\|_{\sE_1}
\\
&\leq &\|f_\varepsilon|_{E_1}-f|_{E_1}\|_{\sE^1_1} + \|f_\varepsilon|_{E_2}-f|_{E_2}\|_{\sE^2_1} 
\\
&\leq & \|h_{1,\varepsilon}-f|_{E_1}\|_{\sE^1_1}+|h_{1,\varepsilon}(\b0)-f(\b0)|\cdot \|g_1\|_{\sE^1_1} +\|h_{2,\varepsilon}-f|_{E_2}\|_{\sE^2_1}+|h_{2,\varepsilon}(\b0)-f(\b0)|\cdot \|g_2\|_{\sE^2_1}  
\\
&<& \varepsilon. 
\end{eqnarray*}
This tells us that $\sC$ is dense in $\sF$ relative to the $\sE_1$-norm, which implies the regularity of $(\sE, \sF)$.  
\qed
\end{proof}

\begin{definition}[Distorted Brownian motion with varying dimension]\label{definition-dBMVD}  Let $\gamma>0$ be fixed.   The diffusion process   associated with $(\sF, \sE)$  defined in \eqref{dirichletform-dbm-1} is called  a distorted Brownian motion with varying dimension and   is denoted by $M$.
\end{definition}

\section{Basic properties of $M$ and its associated signed radial process}\label{Basics-of-M}

In this section, we quickly remark on some of the  basic properties of $M$ that are mostly reflected through its Dirichlet form expression. We give the rigorous statement that $\b0$ is of positive capacity with respect to $M$, therefore can be hit with probability one starting from quasi-everywhere. From there  the existence of the transition density function is  established. 
In the second subsection  we  give the SDE characterization for the radial process of $M$ which is needed in Section \ref{Proof-HK}. As a corollary to this SDE characterization, we present the isotropic property of $M$.

\subsection{Basic properties of $M$}\label{SEC3-1}

For any open set  $D\subset E$, we denote by   $\text{Cap}_1(D)$ the  $1$-capacity of $D$ with respect to $(\sE,\sF)$, i.e., 
\begin{equation*}
\text{Cap}_1(D):=\inf \{\sE_1(u, u):\, u\in \sF, \, u\ge 1 \quad m_\gamma\text{-a.e. on }D\}.
\end{equation*}
For an arbitrary subset $A\subset E$, 
\begin{align*}
\text{Cap}_1(A):&=\inf\{\text{Cap}_1(D):\, D\supset A,\, D \text{ open}\}
\\
&=\inf \{\sE_1(u, u):\, u\in \sF,\, \exists D \text{ open in }E \text{ s.t. } u\ge 1 \quad m_\gamma\text{-a.e. on }D\}.
\end{align*}
\begin{proposition}\label{PRO2}
For any $u\in E_2$, ${\rm Cap}_1(\{u\})>0$. However, for any $x\in E_1\setminus \{\b0\}$, ${\rm Cap}_1(\{x\})=0$. Particularly, ${\rm Cap}_1(\{\b0\})>0$. Furthermore, $\IP_x(\sigma_{\{\b0\}}<\infty)=1$ for all  $x\in E$. 
\end{proposition}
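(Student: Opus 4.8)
The plan is to prove the three assertions separately, each time reducing to a model problem in dimension one or three for which the answer is already available from Section~\ref{SEC2-1}. \emph{Positive capacity of points of $E_2$ (in particular of $\b0$).} Fix $u\in E_2$ and write $u=\iota_2(a)$ with $a\ge 0$. I would first record a one-sided monotonicity of $1$-capacities under restriction to the $E_2$-component: if $D\subset E$ is open with $u\in D$ and $w\in\sF$ satisfies $w\ge 1$ $m_\gamma$-a.e.\ on $D$, then $w|_{E_2}\in\sF^2$, the set $D\cap E_2$ is relatively open in $E_2$ and contains $u$, $w|_{E_2}\ge 1$ $m_\gamma^2$-a.e.\ there, and — using $m_\gamma|_{E_2}=m_\gamma^2$ and $\sE(w,w)\ge\sE^2(w|_{E_2},w|_{E_2})$ — one gets $\sE_1(w,w)\ge\sE^2_1(w|_{E_2},w|_{E_2})\ge{\rm Cap}^{\sE^2}_1(\{u\})$; taking infima over $w$ and over $D$ yields ${\rm Cap}_1(\{u\})\ge{\rm Cap}^{\sE^2}_1(\{u\})$. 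Transporting by the isometry $\iota_2$, this last quantity equals the $1$-capacity of $\{a\}$ for $(\mathcal{E}^{(+)},\mathcal{F}^{(+)})$, which is strictly positive: $(\mathcal{E}^{(+)},\mathcal{F}^{(+)})$ is the form of a regular one-dimensional diffusion on $[0,\infty)$ (reflected Brownian motion with drift $-\gamma$) whose domain consists of absolutely continuous functions, so every point of $[0,\infty)$ — interior or the reflecting endpoint $0$ — is visited almost surely and hence non-polar. Taking $u=\b0=\iota_2(0)$ gives ${\rm Cap}_1(\{\b0\})>0$.

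\emph{Polarity of points of $E_1\setminus\{\b0\}$.} For such points the monotonicity above goes the wrong way, so I would instead localize through a part Dirichlet form. For $x\in E_1\setminus\{\b0\}$ the set $E_1\setminus\{\b0\}$ is open in $E$, and one checks that the part form $(\sE,\sF_{E_1\setminus\{\b0\}})$ coincides, via $\iota_1$, with the part form of $(\mathcal{E}^{(3)},\mathcal{F}^{(3)})$ on $\mathbb{R}^3\setminus\{0\}$: a function in $\sF$ that vanishes quasi-everywhere on $E_2$ restricts on $E_1$ to a function in $\mathcal{F}^{(3)}$ whose quasi-continuous version vanishes at $0$ (here one uses that $\b0$ is non-exceptional, from the first part), and conversely, with equal energies. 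By Theorem~\ref{THM1}(ii) the singleton $\{x\}$ is $\mathcal{E}^{(3)}$-polar, hence polar for this part form, hence — polarity of a Borel subset of an open set being the same for a regular Dirichlet form and for its part on that set (\cite[Ch.~4]{FOT}) — $\{x\}$ is $(\sE,\sF)$-polar, i.e.\ ${\rm Cap}_1(\{x\})=0$.

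\emph{Hitting $\b0$ from every point.} For $x\in E_1$ (the case $x=\b0$ being trivial, assume $x\in E_1\setminus\{\b0\}$ and identify $x$ with its $\mathbb{R}^3$-coordinate): since $E\setminus\{\b0\}$ is the disjoint union of the open sets $E_1\setminus\{\b0\}$ and $E_2\setminus\{\b0\}$ and $M$ has continuous paths, $M$ started at $x$ remains in $E_1\setminus\{\b0\}$ up to time $\sigma_{\{\b0\}}$; hence, by the part-form identification above, $M$ run up to $\sigma_{\{\b0\}}$ has under $\IP_x$ the law of $\iota_1(X^{(3)})$ run up to $\sigma^{X^{(3)}}_{\{0\}}$, and in particular $\sigma_{\{\b0\}}$ and $\sigma^{X^{(3)}}_{\{0\}}=\inf\{t:|X^{(3)}_t|=0\}$ have the same $\IP_x$-law. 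By Section~\ref{SEC2-1}(iii)--(iv), $|X^{(3)}|$ is a reflected Brownian motion with drift $-\gamma$ started at $|x|$ (cf.\ \eqref{radial-3d-dbm}), which reaches $0$ in finite time almost surely from any starting point; so $\IP_x(\sigma_{\{\b0\}}<\infty)=1$. For $x\in E_2$ the same argument with $X^{(+)}$ in place of $X^{(3)}$ — whose radial behaviour is again reflected Brownian motion with drift $-\gamma$ on $[0,\infty)$ — gives $\IP_x(\sigma_{\{\b0\}}<\infty)=1$. Together these cover all $x\in E$.

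\emph{Anticipated main obstacle.} The one- and three-dimensional model facts used above are standard; the two delicate points are (a) the passages through part Dirichlet forms — verifying carefully that the part of $(\sE,\sF)$ on $E_1\setminus\{\b0\}$ (resp.\ $E_2\setminus\{\b0\}$) is exactly the $3$- (resp.\ $1$-)dimensional distorted-Brownian-motion form, and that polarity is preserved in both directions — and (b) obtaining the hitting statement for \emph{every} $x\in E$ rather than merely for quasi-every $x$. Point (b) is the reason I route the last step through the radial processes, which are genuine one-dimensional diffusions and so can be started from every point; combining recurrence of $(\sE,\sF)$ (which holds since $1\in\sF$ and $\sE(1,1)=0$) with \cite[Theorem~4.7.1]{FOT} would only deliver the quasi-everywhere version at this stage, since the transition density is constructed only afterwards.
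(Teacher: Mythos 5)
Your overall strategy matches the paper's: reduce each capacity statement to the known facts about the component forms $(\sE^1,\sF^1)$ and $(\sE^2,\sF^2)$, and get the hitting statement from the component processes. Two of your reductions are organized slightly differently and arguably more cleanly: you prove ${\rm Cap}_1(\{u\})>0$ for \emph{all} $u\in E_2$ at once by the one-sided comparison $\sE_1(w,w)\ge \sE^2_1(w|_{E_2},w|_{E_2})$, whereas the paper uses the transfer theorem \cite[Theorems 1.3.14, 3.3.8]{CF} for $u\in E_2\setminus\{\b0\}$ and reserves the direct comparison for $\b0$ alone; and for the hitting statement on $E_1\setminus\{\b0\}$ you argue through the radial SDE \eqref{radial-3d-dbm} where the paper simply cites \cite[Corollary 3.11]{FL17}. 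For the polarity of points of $E_1\setminus\{\b0\}$ your part-form identification is exactly the paper's mechanism. These are all sound.

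The one genuine weak point is your parenthetical ``the case $x=\b0$ being trivial.'' It is not: since $\sigma_{\{\b0\}}=\inf\{t>0: M_t=\b0\}$, starting at $\b0$ does not by itself force $\sigma_{\{\b0\}}<\infty$ — a priori the process could leave $\b0$ instantly and never return, exactly as a $3$-dimensional Brownian motion started at the origin never returns to it. The paper devotes a separate argument to this case: if $\IP_{\b0}(\sigma_{\{\b0\}}=0)<1$, the Blumenthal $0$-$1$ law forces it to be $0$, making $\{\b0\}$ thin, hence semipolar, hence $\sE$-polar by \cite[Theorem 3.1.10]{CF}, contradicting ${\rm Cap}_1(\{\b0\})>0$. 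Your proof can be completed more cheaply, but you must say how: fix any $s>0$; on $\{M_s=\b0\}$ one has $\sigma_{\{\b0\}}\le s$, while on $\{M_s\ne\b0\}$ the Markov property gives $\IP_{\b0}\left(\sigma_{\{\b0\}}\le s+\sigma_{\{\b0\}}\circ\theta_s<\infty \mid \mathcal{F}_s\right)=\IP_{M_s}(\sigma_{\{\b0\}}<\infty)=1$, using your already-established everywhere (not merely q.e.) statement for starting points in $E\setminus\{\b0\}$. Either route should be made explicit; as written, that step is an assertion, not a proof.
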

\begin{proof}
The conjuction of \cite[Theorem 1.3.14 (i)]{CF}  and \cite[Theorem 3.3.8 (iii)]{CF} states that: For $i=1,2$, any set $N\subset E_i\backslash \{\b0\}$ is ${\rm Cap}_1$-polar with respect to $(\sE, \sF)$ if and only if it is ${\rm Cap}_1$-polar with respect to $(\sE^{E_i\backslash \{\b0\}}, \sF^{E_i\backslash \{\b0\}})$. We denote the Dirichlet forms of $(\sE^i, \sF^i)$, $i=1,2$, restricted on $E_i\backslash \{\b0\}$ by  $(\sE^{i, E_i\backslash \{\b0\}}, \sF^{i, E_i\backslash \{\b0\}})$.  Thus a subset of $E_i\backslash \{\b0\}$  being ${\rm Cap}_1$-polar with respect to  $(\sE^{E_i\backslash \{\b0\}}, \sF^{E_i\backslash \{\b0\}})$ is equivalent to it  being ${\rm Cap}_1$-polar with respect to  $(\sE^{i, E_i\backslash \{\b0\}}, \sF^{i, E_i\backslash \{\b0\}})$.

 It is clear that for any $u\in E_2$,   ${\rm Cap}_1(\{u\})>0$ with respect to $(\sE^2, \sF^2)$,   and that ${\rm Cap}_1(\{x\})=0$ with respect to $(\sE^1, \sF^1)$   for any $x\in E_1\backslash \{\b0\}$. Hence, by repeatedly applying the statement in the  first paragraph, we know that ${\rm Cap}_1(\{u\})>0$  with respect to $(\sE, \sF)$  for any $u\in E_2\backslash \{\b0\}$, and ${\rm Cap}_1(\{x\})=0$ with respect to $(\sE, \sF)$ for any $x\in E_1\backslash \{\b0\}$. 
 
To show  that ${\rm Cap}_1(\{\b0\})>0$ with respect to $(\sE, \sF)$: For any open set  $D\subset E$ containing $\b0$ and  any $u\in \sF$ with $u\ge 1$ $m_\gamma$-a.e. on $D$, $u|_{E_1}\in \sF^1$, $u|_{E_2}\in \sF^2$. Also in view of the definition of open sets of $E$ in Section \ref{Sec-intro}, $D|_{E_1}$ is open in $E_1$ and $D|_{E_2}$ is open in $E_2$.  We know for both $i=1,2$,  ${\rm Cap}_1(\{\b0\})>0$ with respect to  $(\sE^i, \sF^i)$. Since $\sE_1(u, u)=\sE_1^1(u|_{E_1}, u|_{E_1})+\sE_1^2(u|_{E_2}, u|_{E_2})$. it must hold that ${\rm Cap}_1(\{\b0\})>0$ with respect to $(\sE, \sF)$.

 To claim the last assertion, we first note  the restriction of $M$ on $E_1\backslash \{\b0\}$ is homeomorphic to $X^{(3)}$ restriced on $\IR^3\backslash \{0\}$. Thus   \cite[Corollary 3.11]{FL17} implies  
 \begin{equation*}
 \IP_x\left(\sigma_{\{\b0\}}<\infty\right)=1, \quad x\in E_1\backslash \{\b0\}.
 \end{equation*}
Similarly, the restriction of $M$ on $E_2\backslash \{\b0\}$ is homeomorphic to a $1$-dimensional Brownian motion on $\IR_+$ with a constant drift, it is   also clear that 
\begin{equation*}
 \IP_x\left(\sigma_{\{\b0\}}<\infty\right)=1, \quad x\in E_2 \backslash \{\b0\}.
 \end{equation*}
 Finally, to show that $\IP_\b0\left(\sigma_{\{\b0\}}<\infty\right)=1$, we prove by contradiction that actually $\IP_\b0\left(\sigma_{\{\b0\}}=0\right)=1$. If not, then by 0-1 law it would hold $\IP_\b0\left(\sigma_{\{\b0\}}=0\right)=0$. By \cite[Definition A.2.6]{CF}, $\{\b0\}$ would be a thin therefore semipolar set with respect to $M$. By \cite[Theorem 3.1.10]{CF},. $\{\b0\}$ would be $\sE$-polar, which then by \cite[Theorem 3.3.8(iii)]{CF} would imply that $\{\b0\}$ is ${\rm Cap}_1$-polar. This contradicts with the proved first assertion. 
\end{proof}
\qed

With the proposition above, we  establish   the existence of transition density as follows. 

\begin{proposition}
Let $(P_t(x, \cdot))_{t\geq 0}$ be the transition semigroup of $M$. For any $x\in E, t> 0$, $P_t(x,\cdot)\ll m_\gamma$. Thus there exists a density function $\{p(t,x,y): t>0, x,y\in E\}$ such that $P_t(x,dy)=p(t, x,y)m_\gamma(dy)$. 
\end{proposition}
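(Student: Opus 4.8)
The statement to prove is exactly the \emph{absolute continuity condition} (AC condition) for $M$: $P_t(x,\cdot)\ll m_\gamma$ for every $x\in E$ and $t>0$. Granting the AC condition, the existence of a jointly measurable, $m_\gamma$-symmetric version $p(t,x,y)$ with $P_t(x,dy)=p(t,x,y)\,m_\gamma(dy)$ is standard for symmetric Markov processes (see, e.g., \cite[Theorem 4.2.4]{FOT}). My plan is to verify the AC condition by separating off the single point $\b0$. First, by the strong Markov property at $\sigma_{\{\b0\}}$, for every $f\ge 0$ and $x\in E$,
\[
P_tf(x)=\IE_x\!\left[f(M_t);\,t<\sigma_{\{\b0\}}\right]+\IE_x\!\left[\mathbf 1_{\{\sigma_{\{\b0\}}\le t\}}\,\bigl(P_{t-\sigma_{\{\b0\}}}f\bigr)(\b0)\right],
\]
so it is enough to establish (a) that the part process $M^{E\setminus\{\b0\}}$ has a transition density with respect to $m_\gamma$, and (b) that $P_s(\b0,\cdot)\ll m_\gamma$ for every $s>0$. (I will also use that $m_\gamma$ is a finite measure, with $m_\gamma(E)=2$, so that $\mathbf 1_A\in L^2(E,m_\gamma)$ for every Borel $A$.)

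For (a): the sets $E_1\setminus\{\b0\}$ and $E_2\setminus\{\b0\}$ are disjoint, open in $E$, and their union is $E\setminus\{\b0\}$; since $(\sE,\sF)$ is strongly local, $M^{E\setminus\{\b0\}}$ is the direct sum of its two restrictions. On $E_1\setminus\{\b0\}$ this restriction is a homeomorphic image of the $3$-dimensional dBM $X^{(3)}$ killed upon hitting $0$, which has a transition density by \cite{FL17} (the function $q(t,x,y)$ of \eqref{EQQTX}); on $E_2\setminus\{\b0\}$ it is a one-dimensional Brownian motion with constant drift killed at $0$, whose density is classical. Hence $M^{E\setminus\{\b0\}}$ has a transition density $p_{E\setminus\{\b0\}}(t,x,y)$ with respect to $m_\gamma$, and the first term in the displayed identity equals $\int_E p_{E\setminus\{\b0\}}(t,x,y)f(y)\,m_\gamma(dy)$.

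For (b), which I expect to be the main obstacle: fix a Borel set $A$ with $m_\gamma(A)=0$. Then $R_1\mathbf 1_A=0$ in $L^2(E,m_\gamma)$, while the probabilistically defined version $x\mapsto \IE_x\!\bigl[\int_0^\infty e^{-t}\mathbf 1_A(M_t)\,dt\bigr]$ of $R_1\mathbf 1_A$ is $\sE$-quasi-continuous (cf.\ \cite[Theorem 3.3.4]{CF}), hence vanishes quasi-everywhere. Since ${\rm Cap}_1(\{\b0\})>0$ by Proposition \ref{PRO2}, $\b0$ belongs to no $\sE$-polar set, so this version vanishes \emph{at} $\b0$; the same holds with $e^{-t}$ replaced by $e^{-\alpha t}$ for every $\alpha>0$, so $R_\alpha(\b0,\cdot)\ll m_\gamma$ and, by uniqueness of Laplace transforms, $P_t(\b0,A)=0$ for Lebesgue-almost every $t>0$. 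The hard part is to upgrade this to \emph{every} $t>0$; I would do this with It\^o's excursion theory at $\b0$, which by Proposition \ref{PRO2} is regular for itself and is visited from every state. Under $\IP_{\b0}$ the set $\{t:M_t=\b0\}$ has Lebesgue measure zero (its expected Lebesgue measure is $\lim_{\alpha\downarrow 0}R_\alpha\mathbf 1_{\{\b0\}}(\b0)=0$ since $m_\gamma(\{\b0\})=0$), and moreover $\b0$ is non-sticky (the inverse local time at $\b0$ has no drift, again because $m_\gamma(\{\b0\})=0$), so $\IP_{\b0}(M_t=\b0)=0$ for every $t>0$ and $M_t$ a.s.\ lies in the interior of a unique excursion interval away from $\b0$. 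The conditional law of $M_t$, given the left endpoint of that interval, is then a time-marginal of the It\^o excursion measure conditioned to survive, and since $n$-almost every excursion enters $E_1\setminus\{\b0\}$ or $E_2\setminus\{\b0\}$ immediately, this marginal is governed by the entrance law of $X^{(3)}$ from $0$ (resp.\ of the drifted one-dimensional Brownian motion from $0$) --- which is absolutely continuous with respect to $m_\gamma$, being, for $X^{(3)}$, the explicit ``heat kernel started from the origin'' provided by \cite{FL17}. This yields $P_t(\b0,A)=0$ for every $t>0$, and combining with (a) and the displayed identity gives the AC condition. The delicate point throughout is exactly this entrance-law analysis of the excursions from the junction $\b0$: away from $\b0$ the claim reduces to the already-known densities of $X^{(3)}$ and of drifted one-dimensional Brownian motion.
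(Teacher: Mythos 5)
Your overall plan is workable, but it follows a genuinely different and much heavier route than the paper, and its hardest step is only sketched. The paper never proves $P_t(\b0,\cdot)\ll m_\gamma$ for fixed $t$ directly. It instead invokes \cite[Theorem 4.2.4]{FOT} to reduce the absolute continuity condition to the statement that every $m_\gamma$-polar set $B$ is polar. That statement concerns only the hitting quantity $\IE_x\left[e^{-\sigma_B}\right]$, never a fixed-time marginal, and it is settled with exactly the ingredients you already have: $B$ must be contained in $E_1\setminus\{\b0\}$ because points of $E_2$ carry positive capacity; $B$ is then polar for the part process $M^{E_1\backslash \{\b0\}}$ because that process has a density (\cite{FL17}); hence $x\mapsto \IE_x\left[e^{-\sigma_B},\,\sigma_B<\sigma_{\{\b0\}}\right]$ vanishes everywhere, and $\IE_{\b0}\left[e^{-\sigma_B}\right]=0$ follows from fine continuity together with ${\rm Cap}_1(\{\b0\})>0$ --- the same ``vanishes $m_\gamma$-a.e.\ hence q.e.\ hence at $\b0$'' device you use at the resolvent level. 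The strong Markov property at $\sigma_{\{\b0\}}$ then closes the argument. In short, the fixed-$t$ upgrade that you correctly identify as the main obstacle is entirely sidestepped by the potential-theoretic criterion; the excursion-theoretic attack on it (Kesten-type non-hitting of fixed times by the driftless inverse local time, the compensation formula, the entrance law) is avoidable machinery.

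If you do pursue your route, one justification in step (b) does not hold as written. The time-$u$ entrance law $n_u$ of the It\^o excursion measure at $\b0$ is an entrance law for the \emph{killed} semigroup, $\sigma$-finite with total mass blowing up as $u\downarrow 0$; it is not ``the heat kernel of $X^{(3)}$ started from the origin,'' and identifying it with $P_u(\b0,\cdot)$ would in any case be circular. The conclusion you need, namely $n_u\ll m_\gamma$, is nonetheless true, but the correct reason is the entrance-law identity $n_u=n_{u/2}P^{0}_{u/2}$ combined with the absolute continuity of the killed semigroup $P^{0}$ established in your part (a). With that repair, and with a rigorous appeal to the excursion compensation formula (e.g.\ the excursion theory in \cite{CF}) to express $\IP_{\b0}(M_t\in A)$ as an integral of $n_{t-s}(A)$ against the potential measure of the inverse local time, your argument can be completed --- but it is considerably longer than the paper's.
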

\begin{proof}
By \cite[Theorem 4.2.4]{FOT}, it suffices to show that any $m_\gamma$-ploar set is polar. with respect to $M$  Let $B$  be  an arbitrary $m_\gamma$-polar set. ByProposition \ref{PRO2}, $\text{Cap}_1(\{x\})>0$ for any $x\in E_2$. Therefore $B\cap E_2 =\emptyset$, i.e., $B\subset (E_1\backslash \{\b0\})$. It then follows from the continuity of $M$ and the fact $\b0$ is regular for itself  that 
\begin{equation}\label{e:3.1}
\IE_x\left[e^{-\sigma_B}, \, \sigma_B<\sigma_{\{\b0\}}\right]=0, \quad \forall x\in E_2.
\end{equation}
We denote by $M^{E_1\backslash \{\b0\}}$ the part process of $M$ on $E_1\backslash \{\b0\}$, which is also equivalent to  (up to the isomorphism $\iota_1$) $X^{(3)}$ killed upon hitting $0 \in \IR^3$. By \cite[Theorem 3.3.8]{CF}, the assumption $B$ is $m_\gamma$-polar with respect to $M$  together with the fact we just showed that $B\subset (E_1\backslash \{\b0\})$ implies that $B$ is $m_\gamma$-polar with respect to $M^{E_1\backslash \{\b0\}}$.   It was shown in \cite[Remark 2.2]{FL17} that $M^{E_1\backslash \{\b0\}}$ has a density with respect to $m_\gamma|_{E_1}$, thus by \cite[Theorem A.2.17, Corollary 3.1.14]{CF} , $B$ is polar with respect to $M^{E_1\backslash \{\b0\}}$.  We now have
\begin{equation}\label{e:3.2}
\IE_x\left[e^{-\sigma_B}, \, \sigma_B<\sigma_{\{\b0\}}\right]=0, \quad \forall x\in E_1\backslash \{\b0\}.
\end{equation}
\eqref{e:3.1} and \eqref{e:3.2} together shows that $\IE_x\left[e^{-\sigma_B}, \, \sigma_B<\sigma_{\{\b0\}}\right]=0$  for all $x\in E$. Now for any $x\in E$, by the strong Markov property of $M$, it holds on $E$ that
\begin{align}
\IE_x\left[e^{-\sigma_B}\right]& =\IE_x\left[e^{-\sigma_B}, \, \sigma_B<\sigma_{\{\b0\}}\right]+\IE_x\left[e^{-\sigma_B}, \, \sigma_B\ge \sigma_{\{ \b0\}} \right] \nonumber
\\
&=\IE_x\left[e^{-\sigma_B}, \, \sigma_B\ge \sigma_{\{\b0\}}\right]  \nonumber
\\
&=\IE_x\left[e^{-\sigma_{\{\b0\}}}\cdot e^{-\sigma_B\circ \theta_{\sigma_{\{\b0\}}}}\cdot \mathbf{1}_{\left\{\sigma_B\ge \sigma_{\{\b0\}}\right\}}\right]  \nonumber
\\
&=\IE_x\left[\IE_x\left[e^{-\sigma_{\{\b0\}}}\cdot e^{-\sigma_B\circ \theta_{\sigma_{\{\b0\}}}}\cdot \mathbf{1}_{\left\{\sigma_B\ge \sigma_{\{\b0\}}\right\}}\bigg|\mathcal{F}_{\sigma_{\{\b0\}}}\right]\right]\nonumber
\\
&\le \IE_x\left[ e^{-\sigma_{\{\b0\}}}  \IE_x\left[ e^{-\sigma_B\circ \theta_{\sigma_{\{\b0\}}}}\big|\mathcal{F}_{\sigma_{\{\b0\}}} \right]  \right] \nonumber
\\
&\le \IE_x\left[ \IE_{M_{\sigma_{\{\b0\}}}}\left[ e^{-\sigma_B} \right]  \right] =\IE_x\left[\IE_\b0\left[e^{-\sigma_B}\right]\right]=0, \quad \text{q.e. }  x. \label{e:3.3}
\end{align}
The last equality in \eqref{e:3.3} is due to the fact that the map $x\mapsto \IE_x\left[e^{-\sigma_B}\right]$ is finely continous and vanishes $m_\gamma$-a.e., therefore it vanishes q.e. (see, for example, \cite[Lemma 4.1.5]{FOT}, which implies that $\IE_\b0\left[e^{-\sigma_B}\right]=0$. Now that we have shown that for q.e. $x$, $\IE_x[e^{-\sigma_B}]=0$ for an arbitrary $m_\gamma$-polar set $B$, it has been proved that any $m_\gamma$-polar set is polar. This completes the proof. 
\end{proof}\qed

\subsection{Signed radial process of $M$ and its isotropic property}\label{S:3.2}

To introduce the signed radial process of $M$, we define 
\begin{equation}\label{definition-radial-process}
	u(x):=\left\lbrace
		\begin{aligned}
		|x|,\quad &  x \in E_1,\\
		-|x|,\quad & x\in E_2
		\end{aligned} \right.
\end{equation}
and let $Y_t:=u(M_t)$ for $t\geq 0$. 
\begin{proposition}
$Y=(Y_t)_{t\geq 0}$ is a symmetric diffusion process on $\bR$ with respect to $\wt{\fm}$ and can be characterized by the following SDE:
\begin{equation}\label{Y-SDE-alpha=gamma}
	Y_t-Y_0=B_t+\gamma \int_0^t 1_{(-\infty, 0)}(Y_s)ds-\gamma\int_0^t 1_{(0,\infty)}(Y_s)ds,\quad t\geq 0, 
\end{equation}
where $(B_t)_{t\ge 0}$ is a  $1$-dimensional standard Brownian motion.
\end{proposition}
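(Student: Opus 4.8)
The plan is to identify the Dirichlet form of the image process $Y=u(M)$ on $\bR$, and then read the SDE off from the Fukushima decomposition of the coordinate function. I would begin by recording the candidate form. Let $\wt{\fm}:=m_\gamma\circ u^{-1}$ be the push-forward measure on $\bR$; passing to polar coordinates on $E_1$ (where the $4\pi r^2$ from the volume element cancels the $|x|^{-2}$ in \eqref{definition-m}) and transporting $m_\gamma|_{E_2}$ by $x\mapsto -x$ shows $\wt{\fm}(dy)=2\gamma e^{-2\gamma|y|}dy$ — the measure of \eqref{def-m-tilde}, which is symmetric about $0$. Let $(\wt\sE,\wt\sF)$ be the form $\wt\sE(f,g)=\frac12\int_\bR f'g'\,\wt{\fm}(dy)$ with $\wt\sF=\{f\in L^2(\bR,\wt{\fm}):f\text{ absolutely continuous},\ f'\in L^2(\bR,\wt{\fm})\}$; it is a regular, strongly local Dirichlet form on $L^2(\bR,\wt{\fm})$, invariant under $y\mapsto -y$, with core $C_c^\infty(\bR)$.

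The first step is to show that $Y$ is the $\wt{\fm}$-symmetric diffusion associated with $(\wt\sE,\wt\sF)$. The rotation group of $\bR^3$ acts on $E\subset\bR^4$ fixing $E_2$ pointwise; by the isotropy of $X^{(3)}$ recorded after Theorem~\ref{THM1} it preserves both $m_\gamma$ and $(\sE,\sF)$, hence $M$ itself, so the semigroup of $M$ maps functions of $u$ to functions of $u$. Since $u$ is constant on, and separates, the orbits of this action, it follows that $Y=u(M)$ is a conservative Markov process on $\bR$, which is $\wt{\fm}$-symmetric by the symmetry of $\sE$. For $f,g\in C_c^\infty(\bR)$, using $\nabla_x[f(|x|)]=f'(|x|)\,x/|x|$ on $E_1$ and the change of variable $y=-x$ on $E_2$, together with $\sE=\sE^1+\sE^2$,
\[
\sE(f\circ u,g\circ u)=\frac12\int_0^\infty f'(r)g'(r)\,2\gamma e^{-2\gamma r}\,dr+\frac12\int_{-\infty}^0 f'(y)g'(y)\,2\gamma e^{2\gamma y}\,dy=\wt\sE(f,g).
\]
Hence the Dirichlet form of $Y$ is $(\wt\sE,\wt\sF)$; in particular $Y$ is a symmetric diffusion on $\bR$ with respect to $\wt{\fm}$, and $Y$ and $-Y$ have the same law.

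The second step is to extract the SDE by applying the Fukushima decomposition to the coordinate function $f(y)=y$, which belongs to $\wt\sF_{\loc}$: $Y_t-Y_0=M^{[f]}_t+N^{[f]}_t$. The energy measure of $f$ is $(f')^2\wt{\fm}=\wt{\fm}$, whose PCAF under the Revuz correspondence is $t$ (because $\wt{\fm}$ is the symmetrizing measure), so $\langle M^{[f]}\rangle_t=t$ and $M^{[f]}=:B$ is a standard one-dimensional Brownian motion. For the zero-energy part, integration by parts in $\wt\sE(f,g)=\gamma\int_\bR g'(y)e^{-2\gamma|y|}\,dy$ produces boundary contributions at $0$ from the two sides that cancel, leaving $\wt\sE(f,g)=\gamma\int_\bR g(y)\,\mathrm{sgn}(y)\,\wt{\fm}(dy)$; thus $f$ lies locally in the domain of the generator, with generator value $-\gamma\,\mathrm{sgn}(\cdot)$ and no singular part at $0$, so $N^{[f]}_t=-\gamma\int_0^t\mathrm{sgn}(Y_s)\,ds=\gamma\int_0^t 1_{(-\infty,0)}(Y_s)\,ds-\gamma\int_0^t 1_{(0,\infty)}(Y_s)\,ds$. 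This is precisely \eqref{Y-SDE-alpha=gamma}.

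The step I expect to be the main obstacle is the behavior at the junction point: showing that $Y$ passes through $0$ with no boundary local-time term in its SDE, and with no stickiness or skewness. This is exactly the cancellation of the two $g(0)$-contributions in the integration by parts above, which holds because $\wt{\fm}$ has a density that is continuous at $0$, i.e.\ is symmetric about $0$ — equivalently, the scale derivative $e^{2\gamma|\cdot|}$ of the associated diffusion is continuous at $0$. Everything else — the polar-coordinate and change-of-variable identities on $E_1$ and $E_2$, the Markov property of $u(M)$, and the standard one-dimensional facts (energy measures, the Revuz correspondence, the Fukushima decomposition) — is routine once this point is settled.
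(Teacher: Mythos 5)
Your proposal is correct and follows essentially the same route as the paper: compute $\wt{\fm}=2\gamma e^{-2\gamma|y|}dy$, use the rotational invariance of $M$ on $E_1$ to see that the semigroup preserves functions of $u$ (the paper formalizes this step via Sharpe's Theorem 13.5, including a universal-measurability check), identify the Dirichlet form of $Y$ as $\frac12\int f'g'\,\wt{\fm}$, and then read the SDE off the Fukushima decomposition of $f(y)=y$, with energy measure $\wt{\fm}$ giving the Brownian part and the integration by parts (your cancellation of the two $g(0)$ terms is exactly the paper's identity $-\sE^Y(f,g)=\gamma\int_{-\infty}^0 g\,\wt{\fm}-\gamma\int_0^\infty g\,\wt{\fm}$) giving the drift via \cite[Theorem 5.5.5]{FOT}.
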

\begin{proof}
Set $\wt{\mathfrak{m}}:=m_\gamma \circ u^{-1}$, which is a fully supported Radon measure on $\bR$. By simple computation one can easily obtain 
\begin{equation}\label{def-m-tilde}
	\wt{\fm}(dx)=2\gamma \re^{-2\gamma |x|}dx.
\end{equation}
We first  show that $Y$ is a symmetric Markov process using \cite[Theorem 13.5]{Sharpe} by verifying conditions (13.1)-(13.3) in \cite{Sharpe}, among which (13,1) and (13.3) are obvious. To show (13.2), we first denote by  $\cE^u(\bR)$  the family of all universally measurable functions on $\bR$. Let $(P_t)_{t\ge 0}$ be the semigroup of $M$. We need  to show that for any bounded $f\in \mathcal{E}^u(\bR)$, there exists $g\in \cE^u(\bR)$ such that
\begin{equation}\label{EQ4PTF}
	P_t(f\circ u)=g\circ u.
\end{equation}	
By the rotational invariance of $M$ on $E_1$, it is not hard to see that  for any $x, y \in E$ with $u(x)=u(y)$, 
\[
	\int_E f(u(\cdot))\mathbb{P}_x(M_t\in \cdot)=\int_E f(u(\cdot)) \mathbb{P}_y(M_t\in \cdot). 
\]
This implies $P_t(f\circ u)(x)=P_t(f\circ u)(y)$. For $r\in \IR$ and $x\in E$, such that $r=u(x)$, set  $g(r):=P_t(f\circ u)(x)$ which a well-defined function on $\bR$ since $u$ is surjective.  Since $u$ is continuous, $f\circ u\in \cE^u(E)$ and thus $P_t(f\circ u)\in \cE^u(E)$.  Finally to verify the universal measurability of $g$, for any set $A\in \cB(\bR)$, we let $B_+:=g^{-1}(A)\cap (0,\infty)$ and $B_-:=g^{-1}(A)\cap (-\infty, 0]$. In the following we claim actually both $B_+, B_- \in \cE^u(\bR)$. Denote by  $S^2:=\{x\in \bR^3: |x|=1\}$.  Observe that 
\begin{equation*}
\iota_1(B_+\times S^2)= \bigg((P_t(f\circ u))^{-1}(A)\bigg)\cap (E_1\setminus \{\b0\})\in \cE^u(E)
\end{equation*}
 and 
 \begin{equation*}
 \iota_2(-B_-)=\bigg((P_t(f\circ u))^{-1}(A)\bigg)\cap E_2\in \cE^u(E). 
 \end{equation*}
In view of the continuity of $\iota_1, \iota_2$, we have $B_+, B_-\in \cE^u(\bR)$. Thus $g^{-1}(A)=B_+\cup B_-\in \cE^u(\bR)$. 
Now  \cite[Theorem 13.5]{Sharpe} yields that $Y$ is a Markov process with transition semigroup 
\[
	P^Y_tf:=g, \quad \text{for }f\in \cE^u(\IR).
\]
To verify  the symmetry of $Y$:  For any two functions $f_1,f_2\in \cE^u(\IR)$, it holds
\begin{equation*}
	(P^Y_t f_1,f_2)_{\wt{\fm}}=((P^Y_tf_1)\circ u, f_2\circ u)_{m_\gamma}=(P_t(f_1\circ u), f_2\circ u)_{m_\gamma}=(f_1\circ u, P_t(f_2\circ u))_{m_\gamma}=(f_1, P^Y_t f_2)_{\wt{\fm}}.
\end{equation*}
It   follows that $Y$ is associated with the Dirichlet form on $L^2(\bR,\wt{\fm})$:
	\begin{equation*}
	\left\{
		\begin{aligned}
			&\sF^Y=\{f: f\circ u\in \sF\}, \\
		&\sE^Y(f,f)=\sE(f\circ u, f\circ u),\quad f\in \sF^Y. 
		\end{aligned}
		\right.
	\end{equation*}
 By a simple computation,  the above can be rewritten as
\begin{equation}\label{EQ4FYF}
\left\{
\begin{aligned}
	&\sF^Y=\{f\in L^2(\bR, \wt{\fm}): f'\in L^2(\bR, \wt{\fm})\}, \\
	&\sE^Y(f,g)=\frac{1}{2}\int_\bR f'(x)g'(x)\wt{\fm}(dx),\quad f,g\in \sF^Y. 
\end{aligned}
\right.
\end{equation}
Next we take $f(x):=x\in \sF^Y_\mathrm{loc}$ and consider the Fukushima's decomposition (whose definition can be found in, e.g., \cite[Theorem 4.2.6]{CF}):
\[
	f(Y_t)-f(Y_0)=M^f_t+N^f_t. 
\]
The martingale part $M^f$ is determined by its energy measure $\mu_{\<f\>}$ and for any $g\in C_0^\infty(\bR)$,
\[
	\int gd\mu_{\<f\>}= 2\sE^Y(fg,f)-\sE^Y(f^2,g)=\int gd\wt{\fm}. 
\]
It follows that $\mu_{\<f\>}=\wt{\fm}$ and hence $M^f$ has the same distribution as one-dimensional Brownian motion. For the zero-energy part $N^u$, we note
\[
	-\sE^Y(f,g)=-\frac{1}{2}\int_\bR g'(x)\wt{\fm}(dx)=\gamma \int_{-\infty}^0 g(x)\wt{\fm}(dx)- \gamma \int_0^\infty g(x)\wt{\fm}(dx). 
\]
Thus $N^u$ is of bounded variation, and 
\[
	\mu_{N^u}=\gamma \cdot \wt{\fm}|_{(-\infty, 0)}-\gamma\cdot \wt{\fm}|_{(0,\infty)}. 
\]
Eventually, it follows from \cite[Theorem 5.5.5]{FOT} that
\[
	Y_t-Y_0=B_t+\gamma \int_0^t 1_{(-\infty, 0)}(Y_s)ds-\gamma\int_0^t 1_{(0,\infty)}(Y_s)ds, \quad t\geq 0, 
\]	
where $(B_t)_{t\ge 0}$ is a standard Brownian motion.
\end{proof}
\qed

As a corollary to Proposition \ref{Y-SDE-alpha=gamma} , we mention the isotropic property and rotational invariance of $M$ as follows.
\begin{corollary}
\begin{equation*}
\left|M^{E\backslash \{\b0\}}\right|\overset{d}{=} \left|M^{E_1\backslash 
\{\b0\}}\right|\overset{d}{=} M^{E_2\backslash \{\b0\}}.
\end{equation*} 
\end{corollary}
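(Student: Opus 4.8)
The plan is to exploit the SDE characterization \eqref{Y-SDE-alpha=gamma} of the signed radial process $Y$, whose drift coefficient $x\mapsto \gamma\,1_{(-\infty,0)}(x)-\gamma\,1_{(0,\infty)}(x)$ is an odd function of $x$. First I would observe that $M^{E\backslash\{\b0\}}$ is precisely the part process of $M$ killed upon hitting $\b0$, and since the map $u$ of \eqref{definition-radial-process} sends $\b0$ to $0$, the absolute radial process $|M^{E\backslash\{\b0\}}| = |Y^{(-\infty,0)\cup(0,\infty)}|$ coincides with the absolute value of the part process of $Y$ on $\bR\backslash\{0\}$, i.e.\ the process $Y$ killed upon hitting $0$. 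So the whole statement reduces to a statement purely about the one-dimensional diffusion $Y$ and its restrictions to $(-\infty,0)$ and $(0,\infty)$: I must show that $|Y^{\bR\backslash\{0\}}| \overset{d}{=} |Y^{(0,\infty)}| \overset{d}{=} Y^{(0,\infty)}$ (the last identity being trivial since $Y^{(0,\infty)}$ already lives on $(0,\infty)$), together with the matching identification of $Y^{(-\infty,0)}$ with $-Y^{(0,\infty)}$.

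The key step is a reflection/symmetry argument for $Y$. Because the diffusion coefficient of \eqref{Y-SDE-alpha=gamma} is constant (equal to $1$) and the drift coefficient is odd, the process $-Y$ satisfies the same SDE: if $(Y_t,B_t)$ solves \eqref{Y-SDE-alpha=gamma} then $(-Y_t, -B_t)$ solves it as well, and $-B$ is again a standard Brownian motion. Hence $Y$ started at $-y$ has the same law as $-Y$ started at $y$. This, combined with weak uniqueness for \eqref{Y-SDE-alpha=gamma} (which holds since the drift is bounded and measurable, so the SDE is well posed in law by a Girsanov/Zvonkin-type argument, or simply because $Y$ is the diffusion associated with the explicit Dirichlet form \eqref{EQ4FYF} which is invariant under $x\mapsto -x$ since $\wt{\fm}$ is symmetric), gives that $Y$ is invariant in law under the reflection $x\mapsto -x$ of both the starting point and the path. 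Killing this reflection-symmetric process at $0$ commutes with the reflection, so $Y^{(-\infty,0)}$ started at $-y<0$ has the law of $-Y^{(0,\infty)}$ started at $y>0$; taking absolute values, $|Y^{(-\infty,0)}| \overset{d}{=} Y^{(0,\infty)}$. Finally, the part process $Y^{\bR\backslash\{0\}}$ decomposes according to the sign of its (fixed) excursion side — more precisely, conditionally on starting in $(0,\infty)$ it is $Y^{(0,\infty)}$ and on starting in $(-\infty,0)$ it is $Y^{(-\infty,0)}$, since $0$ is never crossed before being hit — so its absolute value is $Y^{(0,\infty)}$ in distribution regardless of the side. Translating back through $u$, $\iota_1$, $\iota_2$ yields the claimed triple identity.

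Alternatively, and perhaps more cleanly for the write-up, I would argue entirely at the level of Dirichlet forms: the form \eqref{EQ4FYF} on $L^2(\bR,\wt\fm)$ is manifestly invariant under the isometry $\Phi: x\mapsto -x$ since $\wt\fm$ is symmetric and $(\Phi f)'(x) = -f'(-x)$; hence $Y$ and $\Phi(Y)$ are equivalent. Restricting to $\bR\backslash\{0\}$ and then mapping by $|\cdot|$, both $Y^{(0,\infty)}$ and $-Y^{(-\infty,0)}$ are seen to be associated with the same Dirichlet form on $L^2((0,\infty), 2\gamma e^{-2\gamma x}dx)$, namely the one with domain $\{f\in L^2: f'\in L^2, f(0)=0\}$ and energy $\tfrac12\int f'g'\,\wt\fm$ — which by \eqref{EQ1LUG} and item (iii) of the remarks is exactly the form of the radial part of $X^{(3)}$ killed at $0$, i.e.\ of $|M^{E_1\backslash\{\b0\}}|$. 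Since $Y^{(3)}$ killed at $0$ and the $1$-dimensional drifted BM on $\IR_+$ killed at $0$ share this form, all three processes in the corollary coincide in law.

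The main obstacle I anticipate is the bookkeeping around the killed/part processes rather than the symmetry itself: one must be careful that $|M^{E\backslash\{\b0\}}|$ really is the absolute value of $Y$ killed at $0$ (using that $M^{E\backslash\{\b0\}}$ is the part process off $\b0$ and that, by the strong locality of $(\sE,\sF)$ and Proposition \ref{PRO2}, the trajectory cannot jump between $E_1$ and $E_2$ without passing through $\b0$), and that restricting the Dirichlet form \eqref{EQ4FYF} to $\bR\backslash\{0\}$ decouples into two independent copies — one on each half-line — with the correct Dirichlet boundary condition at $0$. Once that decoupling and the identification with the form in item (iii) of the remarks after Theorem \ref{THM1} are in place, the reflection symmetry of \eqref{Y-SDE-alpha=gamma} or of \eqref{EQ4FYF} finishes the argument immediately.
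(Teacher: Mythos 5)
Your proposal is correct and follows essentially the same route as the paper: identify $\left|M^{E\backslash \{\b0\}}\right|$, $\left|M^{E_1\backslash \{\b0\}}\right|$, and $M^{E_2\backslash \{\b0\}}$ with $|Y^{\IR\backslash\{0\}}|$, $Y^{(0,+\infty)}$, and $-Y^{(-\infty,0)}$ respectively, and then invoke the symmetry $-Y\overset{d}{=}Y$ coming from the odd drift in \eqref{Y-SDE-alpha=gamma}. The paper's proof is just a terser version of your first argument; your Dirichlet-form alternative and the bookkeeping remarks are consistent elaborations rather than a different method.
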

\begin{proof}
We first notice that $\left|M^{E\backslash \{\b0\}}\right|=|Y^{\IR\backslash \{0\}}|$,  $\left|M^{E_1\backslash \{\b0\}}\right| = Y^{(0, +\infty)}$, and $\left|M^{E_2\backslash \{\b0\}}\right|=-Y^{(-\infty, 0)}$. 
 Proposition \ref{Y-SDE-alpha=gamma} suggests that $-Y\overset{d}{=}Y$, thus both identities follow.  
\end{proof}
\qed

\section{Heat kernel of $M$: Proof to Theorem \ref{main-thm}}\label{Proof-HK}

Throughout the rest of this paper, we define the ``unsigned radial process" of $M$ as
\begin{equation}\label{def-Yhat}
\widehat{Y}:=|M|=|Y|.
\end{equation}
For $M$, we use $\wh{p}(t,x,y)$ to denote the transition density with respect to the measure on $E$ induced by $3$- or $1$-dimensional Lebesgue measure, and we  let $p(t,x,y)$ denote the transition density with respect to $m_\gamma$. Thus  
\begin{equation}\label{relationship_p_phat}
p(t,x,y)=\wh{p}(t,x,y)\frac{1}{h_\gamma (y)^2},
\end{equation}
where
\begin{eqnarray*}
	h_{\gamma}:=\left\lbrace
	\begin{aligned}
		&\psi_\gamma, \quad \text{ on }E_1, \\
		&\phi_\gamma,\quad \text{ on }E_2,
	\end{aligned} \right.
\end{eqnarray*}
where $\phi_\gamma$ is defined in \eqref{def-phi_gamma}.  We denote by $p^Y(t,x,y)$ and $p^{\wh{Y}}(t,x,y)$ the densities of $Y$ and $\wh{Y}$ respectively, both  with respect to $\wt{\mathfrak{m}}$ characterized in \eqref{def-m-tilde}. We denote by $\wh{p}^Y(t,x,y)$ and $\wh{p}^{\wh{Y}}(t,x,y)$  the densities of $Y$ and $\wh{Y}$ with respect to the $1$-dimensional Lebesgue measure.

The  first key ingredient of the proof is that we establish the explicit transition density for $3$-dimensional  dBM killed upon hitting $\b0$. The second key ingredient is to find the explicit density function for part dBM with varying dimension restricted on $E_2$. The global density function for $M$ can be obtained by combining these two key ingredients, as well as the exact density function for $1$-dimensional Brownian motion with constant drift (pushing towards $0$) reflected at $0$, which was established in \cite{Linetsky}.

Recall that we denote the $3$-dimensional distorted Brownian motion by $X^{(3)}$. We let $q(t,x,y)$ denote the transition density function of the part process of $M$ on $(E_1\backslash \{\b0\})$ killed upon hitting $\b0$ with respect to $m_\gamma^{1}$, i.e., for any non-negative function $f\geq 0$ on $E_1$, 
\begin{equation}\label{e:1.7}
 \int_{E_1\backslash \{\b0\}}  q (t, x, y)  f(y) m_\gamma^{1}(dy) = \IE_x \left[ f(M_t); t< \sigma_{\{\b0\}} \right], \quad x\in E_1\backslash \{\b0\}
\end{equation}
\begin{proposition}\label{HK-killed-dBM}
\begin{equation}\label{EQQTX}
q(t,x,y)=\frac{1}{(2\pi t)^{3/2}}\;e^{-\gamma^2t/2-|x-y|^2/(2t)}\frac{1}{\psi_\gamma (x)\psi_\gamma (y)}, \quad x,y\in E_1\backslash \{\b0\}, \, t>0. 
\end{equation}
\end{proposition}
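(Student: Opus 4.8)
The plan is to identify the part process of $M$ on $E_1\setminus\{\b0\}$ with the $3$-dimensional distorted Brownian motion $X^{(3)}$ killed upon hitting $0$, and then to compute the heat kernel of the latter by exploiting the fact that $X^{(3)}$ is an $h$-transform (Doob transform) of ordinary Brownian motion. Concretely, recall from \eqref{def-m3} that $\mathfrak{m}^{(3)}(dx)=\psi_\gamma(x)^2\,dx$ with $\psi_\gamma(x)=\sqrt{\gamma/(2\pi)}\,e^{-\gamma|x|}/|x|$, and that the Dirichlet form $\mathcal E^{(3)}(f,g)=\tfrac12\int \nabla f\cdot\nabla g\,\mathfrak m^{(3)}$ on $L^2(\mathbb R^3,\mathfrak m^{(3)})$ is, via the unitary map $f\mapsto \psi_\gamma f$ from $L^2(\mathbb R^3,\mathfrak m^{(3)})$ to $L^2(\mathbb R^3,dx)$, intertwined with a Schr\"odinger-type form on $L^2(\mathbb R^3,dx)$. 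The key analytic point is that away from the origin $\psi_\gamma$ is harmonic for the relevant operator: indeed $e^{-\gamma|x|}/|x|$ satisfies $\tfrac12\Delta u = \tfrac{\gamma^2}{2}u$ on $\mathbb R^3\setminus\{0\}$, so $\psi_\gamma$ is (up to the constant $\gamma^2/2$) a ground state, and the killed process on $\mathbb R^3\setminus\{0\}$ is precisely the $h$-transform of Brownian motion killed at $0$ by $h=\psi_\gamma$, with an extra $e^{-\gamma^2 t/2}$ factor coming from the spectral shift.

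The steps, in order, would be: (1) Cite the identification (already used in the proof of the preceding density-existence proposition and in \cite{FL17}, Remark~2.2) that $M^{E_1\setminus\{\b0\}}$ is, up to the isometry $\iota_1$, $X^{(3)}$ killed upon hitting $0\in\mathbb R^3$; so it suffices to compute the transition density $q(t,x,y)$ of $X^{(3)}$ killed at $0$ with respect to $\mathfrak m^{(3)}$. (2) Let $\widehat q(t,x,y)$ be the corresponding density with respect to Lebesgue measure, so $q(t,x,y)=\widehat q(t,x,y)/\psi_\gamma(y)^2$ as in \eqref{relationship_p_phat}. Verify that $\psi_\gamma$ solves $\tfrac12\Delta\psi_\gamma = \tfrac{\gamma^2}{2}\psi_\gamma$ on $\mathbb R^3\setminus\{0\}$ by a direct computation with the radial Laplacian $\tfrac1{r^2}\partial_r(r^2\partial_r\,\cdot\,)$ applied to $e^{-\gamma r}/r$. (3) Write the killed process as the Doob $h$-transform of standard BM killed at $0$ with $h=\psi_\gamma$: since any singleton is polar for $3$-dimensional BM, ``killed at $0$'' is the same as ordinary BM, so the transition density of the $h$-transform with respect to Lebesgue measure is $\widehat q(t,x,y) = e^{-\gamma^2 t/2}\,\dfrac{\psi_\gamma(y)}{\psi_\gamma(x)}\,g_3(t,x,y)$ where $g_3(t,x,y)=(2\pi t)^{-3/2}e^{-|x-y|^2/(2t)}$ is the Gaussian kernel; the scalar $e^{-\gamma^2 t/2}$ accounts for the fact that $h$ is a $\lambda$-harmonic function with $\lambda=\gamma^2/2$ rather than a genuine harmonic function. (4) Divide by $\psi_\gamma(y)^2$ to pass to the $\mathfrak m^{(3)}$-density: $q(t,x,y) = e^{-\gamma^2 t/2}\,\dfrac{1}{\psi_\gamma(x)\psi_\gamma(y)}\,g_3(t,x,y)$, which is exactly \eqref{EQQTX} after transporting through $\iota_1$ and recalling the metric convention \eqref{def-metric} (here $x,y\in E_1$ so $|x-y|$ is just Euclidean distance in $\mathbb R^3$). (5) Sanity checks: symmetry in $(x,y)$ with respect to $m_\gamma^1$ is manifest; the sub-Markov property $\int q(t,x,y)\,m_\gamma^1(dy)\le 1$ should hold because $e^{-\gamma^2 t/2}\int g_3(t,x,y)\,(\psi_\gamma(y)/\psi_\gamma(x))\,dy = e^{-\gamma^2 t/2}\,(P_t^{\mathrm{BM}}\psi_\gamma)(x)/\psi_\gamma(x)\le 1$ since $P_t^{\mathrm{BM}}\psi_\gamma \le e^{\gamma^2 t/2}\psi_\gamma$ by the $\lambda$-superharmonicity of $\psi_\gamma$ on all of $\mathbb R^3$ (one checks $\tfrac12\Delta\psi_\gamma \le \tfrac{\gamma^2}{2}\psi_\gamma$ distributionally, with a nonpositive singular contribution at $0$).

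Alternatively, and perhaps more cleanly for a self-contained write-up, I would avoid invoking general $h$-transform theory and instead verify \eqref{EQQTX} directly: show that the proposed $\widehat q(t,x,y)=e^{-\gamma^2 t/2}(2\pi t)^{-3/2}e^{-|x-y|^2/(2t)}\psi_\gamma(y)/\psi_\gamma(x)$ solves the Kolmogorov forward equation $\partial_t \widehat q = L_y^*\widehat q$ for the (adjoint of the) generator of $X^{(3)}$ on $\mathbb R^3\setminus\{0\}$, has the correct Dirac initial condition as $t\downarrow 0$, and satisfies the Dirichlet-type boundary behavior forcing killing at $0$ — namely that $q(t,x,y)\to 0$ appropriately as $x\to 0$, reflecting that $\psi_\gamma(x)\to\infty$. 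This reduces to a routine but slightly delicate computation using that $\psi_\gamma$ is $\lambda$-harmonic off the origin and that the generator of $X^{(3)}$, written in the form $\tfrac12\Delta + \nabla\log\psi_\gamma\cdot\nabla$, conjugates to $\tfrac12\Delta - \tfrac{\gamma^2}{2}$ after multiplication by $\psi_\gamma$. The main obstacle — and the place where care is genuinely needed rather than routine — is step (3)/(the boundary analysis): justifying that the Doob transform of BM by the $\lambda$-harmonic-but-singular function $\psi_\gamma$ really produces the process $M^{E_1\setminus\{\b0\}}$ (i.e., that the killing at $0$ is correctly captured and no additional boundary behavior, such as reflection, sneaks in), and pinning down the constant $e^{-\gamma^2 t/2}$. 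This is where I would lean on the identification already available from \cite{FL17} (Remark~2.2, and the radial description \eqref{radial-3d-dbm} with scale function $\ts$ and speed measure $l$ from Section~\ref{SEC2-1}) to confirm that the killed radial process matches, rather than re-deriving it from scratch; everything else is bookkeeping with Gaussian kernels and the explicit form of $\psi_\gamma$.
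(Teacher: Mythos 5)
Your proposal is correct and follows essentially the same route as the paper: the paper also identifies $M^{E_1\setminus\{\b0\}}$ with $X^{(3)}$ killed at $0$ and performs the ground-state transform by $\psi_\gamma$, just phrased at the Dirichlet-form level (computing $\mathcal{A}^{(3)}(u,v):=\mathcal{E}^{(3),0}(u/\psi_\gamma,v/\psi_\gamma)=\frac12\int\nabla u\cdot\nabla v\,dx+\frac{\gamma^2}{2}\int uv\,dx$ on the core $C_c^\infty(\mathbb{R}^3\setminus\{0\})$) rather than as a Doob $h$-transform of the process. The boundary issue you flag is resolved in the paper exactly as you anticipate: since singletons are polar for $H^1(\mathbb{R}^3)$, the transformed form is that of Brownian motion on all of $\mathbb{R}^3$ killed at rate $\gamma^2/2$, and transporting its Gaussian kernel back through the unitary map yields \eqref{EQQTX}.
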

\begin{proof}
We first observe that  $q(t,\iota^{-1}(x), \iota^{-1}(y))$ coincides with  the  transition density of the part process of  $X^{(3)}$ on $\IR^3\backslash \{0\}$.   
Now denote by $X^{(3), 0}$  the part process of $X^{(3)}$  killed upon hitting $0$. It is associated with the Dirichlet form $(\mathcal{E}^{(3), 0},\mathcal{F}^{(3),0})$ on $L^2(\IR^3, \fm^{(3)})$ where 
\begin{equation}
\left\{
\begin{aligned}
&\mathcal{F}^{(3),0}=\{f\in \mathcal{F}^{(3)}: \tilde{f}(0)=0\}, \\
&\mathcal{E}^{(3),0}(f,g)=\mathcal{E}^{(3)}(f,g),\quad f,g\in \mathcal{F}^{(3),0}. 
\end{aligned}
\right.
\end{equation}
Note that $\mathcal{C}_0:=C_c^\infty(\IR^3\setminus \{0\})$ is a special standard core of $(\mathcal{E}^{(3),0},\mathcal{F}^{(3),0})$.   Set
\begin{equation}\label{EQGUL}
\left\{
\begin{aligned}
	&\mathcal{G}^{(3)}:=\{u\in L^2(\IR^3, dx): u/\psi_\gamma \in \mathcal{F}^{(3),0}\}, \\
&\mathcal{A}^{(3)}(u,v):=\mathcal{E}^{(3),0}(u/\psi_\gamma,v/\psi_\gamma),\quad u,v\in \mathcal{G}. 
\end{aligned}
\right.
\end{equation}
It is easy to verify that $(\mathcal{A}^{(3)},\mathcal{G}^{(3)})$ is a closed form on $L^2(\IR^3, dx)$ and $\mathcal{C}_0\cdot \psi_\gamma:=\{f\cdot \psi_\gamma: f\in \mathcal{C}_0\}$ is $\mathcal{A}^{(3)}_1$-dense in $\mathcal{G}^{(3)}$.  Since $\psi_\gamma$ is smooth on $E_1\setminus \{0\}$, it follows that $\mathcal{C}_0\cdot \psi_\gamma=\mathcal{C}_0$. Hence $\mathcal{C}_0$ is $\mathcal{A}^{(3)}_1$-dense in $\mathcal{G}^{(3)}$. Take $u,v\in \mathcal{C}_0$. Mimicking the proof of \cite[Theorem~2.1]{FL17}, one can obtain
\[
	\mathcal{A}^{(3)}(u,v)=\mathcal{E}^{(3),0}(u/\psi_\gamma, v/\psi_\gamma)=\frac{1}{2}\int_{E_1}\nabla u(x) \cdot \nabla v(x)dx+\frac{\gamma^2}{2}\int_{E_1}u(x)v(x)dx. 
\]
As a result, $\mathcal{G}^{(3)}=H^1(\IR^3)$ and $(\mathcal{A}^{(3)},\mathcal{G}^{(3)})$ is a regular Dirichlet form on $L^2(\IR^3, dx)$ associated with the Brownian motion killed at the rate $\gamma^2/2$. From \eqref{EQGUL}, we can eventually conclude \eqref{EQQTX}, which completes the proof. 
\end{proof}
\qed

Recall that  it has been defined in \eqref{def-Yhat} that $\wh{Y}=|Y|$. The following proposition says that $\wh{Y}$ can be viewed as a  reflected Brownian motion with a constant drift.
\begin{proposition}\label{SDE-mod-M}
\begin{equation*}
d\wh{Y}_t =dB_t-\gamma dt+dL_t^0,\quad t\ge 0,
\end{equation*}
where $L^0$ is the symmetric semimartingale local time with respect to $\wh{Y}$ defined as follows:
\begin{equation*}
\widehat{L}_t^0(\wh{Y}):=\lim_{\delta \downarrow 0}\frac{1}{2 \delta }\int_0^t 1_{(-\delta, \delta )}(\wh{Y}_s)d\langle \wh{Y}\rangle_s
= \lim_{\delta \downarrow 0}\frac{1}{2 \delta}\int_0^t 1_{(-\delta, \delta )}(\wh{Y}_s) d s. 
\end{equation*}

\end{proposition}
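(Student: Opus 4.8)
The plan is to derive the SDE for $\wh{Y}=|Y|$ from that of $Y$ via the symmetric Tanaka formula. By the preceding proposition (see \eqref{Y-SDE-alpha=gamma}), $Y$ is a continuous semimartingale with
\begin{equation*}
Y_t-Y_0=B_t+\gamma\int_0^t 1_{(-\infty,0)}(Y_s)\,ds-\gamma\int_0^t 1_{(0,\infty)}(Y_s)\,ds,\qquad t\ge 0,
\end{equation*}
so its martingale part is the Brownian motion $B$, we have $\langle Y\rangle_t=t$, and by the occupation time formula $\int_0^t 1_{\{Y_s=0\}}\,ds=0$ almost surely. Writing $\wt{s}(x):=1_{(0,\infty)}(x)-1_{(-\infty,0)}(x)$ (which vanishes at $0$), the symmetric version of Tanaka's formula yields
\begin{equation*}
\wh{Y}_t=|Y_t|=|Y_0|+\int_0^t \wt{s}(Y_s)\,dY_s+\wt{L}^0_t(Y),
\end{equation*}
where $\wt{L}^0_t(Y):=\lim_{\delta\downarrow0}\frac{1}{2\delta}\int_0^t 1_{(-\delta,\delta)}(Y_s)\,d\langle Y\rangle_s$ is the symmetric local time of $Y$ at $0$.

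First I would substitute the SDE for $Y$ into the stochastic integral. On $\{Y_s\ne 0\}$ one has $\wt{s}(Y_s)\gamma 1_{(-\infty,0)}(Y_s)=-\gamma 1_{(-\infty,0)}(Y_s)$ and $\wt{s}(Y_s)(-\gamma)1_{(0,\infty)}(Y_s)=-\gamma 1_{(0,\infty)}(Y_s)$, so the finite-variation part of $\int_0^t\wt{s}(Y_s)\,dY_s$ equals $-\gamma\int_0^t 1_{\{Y_s\ne 0\}}\,ds=-\gamma t$. The martingale part is $\wt{B}_t:=\int_0^t \wt{s}(Y_s)\,dB_s$, a continuous local martingale with $\langle\wt{B}\rangle_t=\int_0^t \wt{s}(Y_s)^2\,ds=\int_0^t 1_{\{Y_s\ne 0\}}\,ds=t$; by Lévy's characterization $\wt{B}$ is a standard one-dimensional Brownian motion, and it plays the role of $B$ in the statement. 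Hence
\begin{equation*}
\wh{Y}_t=\wh{Y}_0+\wt{B}_t-\gamma t+\wt{L}^0_t(Y).
\end{equation*}

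It then remains to identify $\wt{L}^0_t(Y)$ with the local time $\widehat{L}^0_t(\wh{Y})$ of the statement. From the previous display $\langle\wh{Y}\rangle_t=\langle\wt{B}\rangle_t=t$, and since $\wh{Y}_s=|Y_s|\ge 0$ we have $1_{(-\delta,\delta)}(Y_s)=1_{[0,\delta)}(|Y_s|)=1_{(-\delta,\delta)}(\wh{Y}_s)$ for each $\delta>0$; therefore
\begin{equation*}
\wt{L}^0_t(Y)=\lim_{\delta\downarrow0}\frac{1}{2\delta}\int_0^t 1_{(-\delta,\delta)}(\wh{Y}_s)\,ds=\lim_{\delta\downarrow0}\frac{1}{2\delta}\int_0^t 1_{(-\delta,\delta)}(\wh{Y}_s)\,d\langle\wh{Y}\rangle_s=\widehat{L}^0_t(\wh{Y}),
\end{equation*}
which is exactly the symmetric semimartingale local time appearing in the statement. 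Combining the last two displays gives $d\wh{Y}_t=dB_t-\gamma\,dt+d\widehat{L}^0_t(\wh{Y})$, as claimed.

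The step I expect to take the most care is the bookkeeping at the origin: checking that the convention $\wt{s}(0)=0$ is the one matching the \emph{symmetric} (rather than one-sided) local time used in the statement, that replacing the ordinary sign function by $\wt{s}$ changes neither the stochastic integral nor the drift integral because the two integrands differ only on the $ds$-null, hence $dB_s$-null, set $\{Y_s=0\}$, and that indeed $\int_0^t 1_{\{Y_s=0\}}\,ds=0$. None of this is deep, but it is precisely where a sign error or a mismatched local-time normalization would hide.
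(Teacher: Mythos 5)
Your proof is correct and follows exactly the route the paper takes: the paper's own proof is the one-line remark that the proposition ``is an immediate consequence of applying Tanaka's formula to \eqref{Y-SDE-alpha=gamma},'' and your argument is precisely that application, carried out with the symmetric sign convention and the verification via L\'evy's characterization that the martingale part is again a Brownian motion. The bookkeeping you flag at the origin (the $ds$-null set $\{Y_s=0\}$ and the matching of the symmetric local times of $Y$ and $|Y|$) is handled correctly and is consistent with the normalization in the statement.
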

\begin{proof}
This is an immediate consequence of applying Tanaka's formula to \eqref{Y-SDE-alpha=gamma}.
\end{proof}
\qed

The following  transition density (with respect to Lebesgue measure) of reflected Brownian motion with constant drift was established by Linetsky in \cite[Section 4.2]{Linetsky}: 
\begin{align*}
\wh{p}^{\wh{Y}}(t, x, y)&=2\gamma e^{-2\gamma y} +\frac{2}{\pi}e^{\gamma (x-y)-\gamma^2 t/2} \nonumber
\\
&\times \int_{0}^\infty \frac{e^{-s^2t/2}}{s^2+\gamma^2}\left[s \cos(sx)-\gamma \sin(sx)\right]\left[s\cos(sy)-\gamma \sin (sy)\right]ds, \, x,y\in (0, +\infty).
\end{align*}
By a simple change of measure, we get
\begin{eqnarray}
&& p^{\wh{Y}}(t, x, y)=\wh{p}^{\wh{Y}}(t, x, y)\frac{1}{\phi_\gamma(y)^2}\nonumber
\\
&=& 1 +\frac{1}{\pi \gamma}e^{\gamma (x+y)-\gamma^2 t/2}\int_{0}^\infty \frac{e^{-s^2t/2}}{s^2+\gamma^2}\left[s \cos(sx)-\gamma \sin(sx)\right]\left[s\cos(sy)-\gamma \sin (sy)\right]ds, \, x,y\in (0, +\infty). \nonumber
\\\label{density-RBM-drift}
\end{eqnarray}

Let  $p_{E_2\backslash \{\b0\}}(t, x,y)$ denote the transition density of the part process of  $M$ restricted on $E_2\backslash \{\b0\}$ with respect to $m_\gamma^2$.  We  first record the following lemma regarding $p_{E_2\backslash \{\b0\}}(t, x,y)$. 

\begin{lemma}
\begin{equation}\label{density-p_E2}
p_{E_2\backslash \{\b0\}}(t,x,y)=\frac{1}{\gamma\sqrt{8\pi t}}e^{-\gamma^2 t/2+\gamma(|x|+|y|)}\left(e^{-(|x|-|y|)^2/(2t)}-e^{-(|x|+|y|)^2/(2t)}\right), x,y\in E_2\backslash \{\b0\},  t>0.
\end{equation}
\end{lemma}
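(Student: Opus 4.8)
The plan is to mimic the ground-state transform argument in the proof of Proposition~\ref{HK-killed-dBM}, carried out now on the one-dimensional component $E_2\cong[0,\infty)$. First, via the isomorphism $\iota_2$, the density $p_{E_2\backslash\{\b0\}}(t,x,y)$ (with respect to $m_\gamma^2$, equivalently $\mathfrak{m}^{(+)}$) coincides with the transition density of the part process of $X^{(+)}$ killed upon hitting $0$. This part process is associated with the Dirichlet form $\mathcal{F}^{(+),0}=\{f\in\mathcal{F}^{(+)}:\widetilde f(0)=0\}$, $\mathcal{E}^{(+),0}=\mathcal{E}^{(+)}$ on $L^2(\mathbb{R}_+\cup\{0\},\mathfrak{m}^{(+)})$ (the trace at $0$ is meaningful since $\{0\}$ has positive $\mathcal{E}^{(+)}$-capacity), and $C_c^\infty((0,\infty))$ is a special standard core of it.

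Next I would change measure by $\phi_\gamma(u)=\sqrt{2\gamma}\,e^{-\gamma u}$: set $\mathcal{G}:=\{u\in L^2(\mathbb{R}_+,dx):u/\phi_\gamma\in\mathcal{F}^{(+),0}\}$ and $\mathcal{A}(u,v):=\mathcal{E}^{(+),0}(u/\phi_\gamma,v/\phi_\gamma)$. Writing $f=u/\phi_\gamma=ue^{\gamma x}/\sqrt{2\gamma}$, one gets $f'g'\cdot 2\gamma e^{-2\gamma x}=(u'+\gamma u)(v'+\gamma v)$, and the cross term integrates to $\tfrac{\gamma}{2}\int_{\mathbb{R}_+}(uv)'\,dx=-\tfrac{\gamma}{2}u(0)v(0)=0$ for $u,v\in C_c^\infty((0,\infty))$, so
\[
\mathcal{A}(u,v)=\frac12\int_{\mathbb{R}_+}u'(x)v'(x)\,dx+\frac{\gamma^2}{2}\int_{\mathbb{R}_+}u(x)v(x)\,dx .
\]
Because $\phi_\gamma$ is smooth and bounded away from $0$ on $[0,\infty)$, the constraint $u/\phi_\gamma\in\mathcal{F}^{(+),0}$ is equivalent to $u\in H^1(\mathbb{R}_+)$ with $u(0)=0$; thus $\mathcal{G}=H^1_0(\mathbb{R}_+)$, $C_c^\infty((0,\infty))$ is $\mathcal{A}_1$-dense in $\mathcal{G}$, and, exactly as in Proposition~\ref{HK-killed-dBM}, $(\mathcal{A},\mathcal{G})$ is the regular Dirichlet form on $L^2(\mathbb{R}_+,dx)$ of Brownian motion on $(0,\infty)$ absorbed at $0$ and killed at constant rate $\gamma^2/2$.

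Its transition density with respect to Lebesgue measure is, by the reflection principle and the Feynman--Kac factor $e^{-\gamma^2t/2}$,
\[
\widetilde p(t,x,y)=\frac{e^{-\gamma^2t/2}}{\sqrt{2\pi t}}\Bigl(e^{-(x-y)^2/(2t)}-e^{-(x+y)^2/(2t)}\Bigr),\qquad x,y>0 .
\]
Undoing the transform gives $p_{E_2\backslash\{\b0\}}(t,x,y)=\widetilde p(t,|x|,|y|)/\bigl(\phi_\gamma(|x|)\phi_\gamma(|y|)\bigr)$, and substituting $\phi_\gamma(|x|)\phi_\gamma(|y|)=2\gamma e^{-\gamma(|x|+|y|)}$ together with $2\gamma\sqrt{2\pi t}=\gamma\sqrt{8\pi t}$ produces exactly \eqref{density-p_E2}. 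The point requiring care — the analogue of the crux of Proposition~\ref{HK-killed-dBM} — is the clean identification $\mathcal{G}=H^1_0(\mathbb{R}_+)$, i.e. that the reflecting boundary of $\mathcal{E}^{(+)}$ at $0$ becomes the absorbing boundary after killing and the $\phi_\gamma$-transform, and that $C_c^\infty((0,\infty))$ is a core for $\mathcal{A}$; the rest is routine. As an alternative one can avoid Dirichlet forms: since $Y=u(M)$ and $u(x)=-|x|$ on $E_2$, the part process $M^{E_2\backslash\{\b0\}}$ is identified with $-Y^{(-\infty,0)}$, which by \eqref{Y-SDE-alpha=gamma} is Brownian motion with constant drift $-\gamma$ on $(0,\infty)$ killed at $0$; Girsanov's theorem then reduces its killed density to the driftless reflection-principle formula.
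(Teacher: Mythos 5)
Your proposal is correct and follows essentially the same route as the paper: identify $p_{E_2\backslash\{\b0\}}$ with the density of the killed process $X^{(+),0}$, apply the ground-state ($h$-)transform by $\phi_\gamma$, recognize the transformed form as absorbed Brownian motion on $\IR_+$ killed at constant rate $\gamma^2/2$, and undo the transform against the reflection-principle kernel. The only difference is cosmetic — you compute the transformed form by expanding $(u'+\gamma u)(v'+\gamma v)$ and killing the cross term, whereas the paper integrates by parts against the generator — and both yield the same identification.
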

\begin{proof}
The idea of this proof is very similar to that of Proposition \ref{HK-killed-dBM}.  Let $X^{(+), 0}$ be the part process of $X^{(+)}$ on $\IR_+$ killed upon hitting $0$.  The density of $X^{(+), 0}$ coincides with  $p_{E_2\backslash \{\b0\}}(t,\iota^{-1}(x),\iota^{-1}(y))$. Recall $(\mathcal{E}^{(+)}, \mathcal{F}^{(+)})$ has  been  defined in  \eqref{def-EE+}.  $X^{(+), 0}$ is associated with the Dirichlet form $(\mathcal{E}^{(+),0},\mathcal{F}^{(+),0})$ on $L^2(\IR_+\cup \{0\}, \fm^{(+)})$ where 
\begin{equation*}
\left\{
\begin{aligned}
&\mathcal{F}^{(+),0}=\{f\in \mathcal{F}^{(+)}: f(0)=0\}, \\
&\mathcal{E}^{(+),0}(f,g)=\mathcal{E}^{(+)}(f,g),\quad f,g\in \mathcal{F}^{(+),0}.
\end{aligned}
\right.
\end{equation*}
Note that $C_c^\infty(\IR_+)$ is a special standard core of $(\mathcal{E}^{(+),0},\mathcal{F}^{(+),0})$.  Recall that on $\IR_+\cup \{0\}$  it is defined in \eqref{def-phi_gamma}   that $\phi_\gamma (u):=\sqrt{2\gamma} e^{-\gamma u}$.        Set
\begin{equation}
\left\{
\begin{aligned}
	&\mathcal{G}^{(+)}:=\{u\in L^2(\IR_+\cup \{0\}, dx): u/\phi_\gamma \in \mathcal{F}^{(+),0}\}, \\
&\mathcal{A}^{(+)}(u,v):=\mathscr{E}^{(+),0}(u/\phi_\gamma,v/\phi_\gamma),\quad u,v\in \mathcal{G}^{(+)}. 
\end{aligned}
\right.
\end{equation}
It is easy to see that   $(\mathcal{E}^{(+),0},\mathcal{F}^{(+),0})$ is an $h$-transform of $(\mathcal{A}^{(+)},\mathcal{G}^{(+)})$, where $h=\phi_\gamma$.  In the following we  show:  
\begin{equation}\label{G+}
\mathcal{G}^{(+)}=\left\{f\in L^2(\mathbb{R}_+\cup \{0\}, dx):  f(0)=0, f \text{ absolutely continuous on }[0, +\infty), f'\in L^2(\mathbb{R}_+,dx)\right\}
\end{equation}
 and  $(\mathcal{A}^{(+)},\mathcal{G}^{(+)})$ is a regular Dirichlet form on $L^2(\IR_+\cup \{0\}, dx)$ associated with the $1$-dimensional Brownian motion restricted  on $\IR_+$ killed at a rate $\gamma^2/2$. The approach is similar to Proposition \ref{HK-killed-dBM}. Below we spell out the details. 
It is easy to verify that  $(\mathcal{A}^{(+)},\mathcal{G}^{(+)})$ is a closed form on $L^2(\IR_+\cup \{0\}, dx)$ and $C_c^\infty(\IR_+)\cdot \phi_\gamma :=\{f\cdot \phi_\gamma: f\in C_c^\infty(\IR_+)\}$ is $\mathcal{A}^{(+)}_1$-dense in $\mathcal{G}^{(+)}$.  Since $\phi_\gamma$ is smooth on $(0, +\infty)$,  $C_c^\infty(\IR_+)\cdot \phi_\gamma=C_c^\infty(\IR_+)$. Hence $C_c^\infty(\IR_+)\}$ is $\mathcal{A}^{(+)}_1$-dense in $\mathcal{G}^{(+)}$. Taking $u,v\in C_c^\infty(\IR_+)\}$, we have
\begin{align*}
\mathcal{E}^{(+)}(u, v)&=\frac{1}{2}\int_{\IR_+}u'(x)v'(x)2\gamma e^{-2\gamma |x|}dx
\\
&=-\frac{1}{2}\int_{\IR_+}u(x)\frac{d}{dx}\left(v'(x)2\gamma e^{-2\gamma |x|}\right)dx
\\
&=-\frac{1}{2}\int_{\IR_+}u(x)v''(x)2\gamma e^{-2\gamma |x|}dx-\frac{1}{2}\int_{\IR_+}u(x)v'(x)(-4\gamma^2)e^{-2\gamma |x|}dx
\\
&=\left(-\frac{1}{2}v''(x)+\gamma v'(x), u(x)\right)_{2\gamma e^{-2\gamma |x|}dx}.
\end{align*}
Therefore,
\begin{align*}
\mathcal{A}^{(+)}(u,v)&=\mathcal{E}^{(+),0}(u/\phi_\gamma, v/\phi_\gamma)
\\
&=\left(-\frac{1}{2}\left(\frac{v(x)}{\sqrt{2\gamma}}e^{\gamma |x|}\right)''+\gamma \left(\frac{v(x)}{\sqrt{2\gamma}}e^{\gamma |x|}\right)', u(x)\phi(x)^{-1}\right)_{2\gamma e^{-2\gamma |x|}dx}
\\
&=\Bigg(-\frac{1}{2}\bigg(\frac{v''(x)}{\sqrt{2\gamma}}e^{\gamma |x|}+\frac{v'(x)\gamma}{\sqrt{2\gamma}}e^{\gamma |x|}+v'(x)\frac{\gamma}{\sqrt{2\gamma}}e^{\gamma |x|}+v(x)\frac{\gamma^2}{\sqrt{2\gamma}}e^{\gamma |x|}\bigg)
\\
&\quad \quad+\gamma v'(x)\frac{1}{\sqrt{2\gamma}}e^{\gamma |x|}+\frac{\gamma^2}{\sqrt{2\gamma}}v(x)e^{\gamma |x|},\, u(x)\phi(x)^{-1}\Bigg)_{2\gamma e^{-2\gamma |x|}dx}
\\
&=\left(-\frac{1}{2}\frac{v''(x)}{\sqrt{2\gamma}}e^{\gamma |x|}+\frac{1}{2}\frac{\gamma^2}{\sqrt{2\gamma}}v(x)e^{\gamma |x|}, \, \frac{u(x)}{\phi(x)}\right)_{2\gamma e^{-2\gamma |x|}dx} 
\\
&=\left(-\frac{1}{2}\frac{v''(x)}{\phi(x)}+\frac{1}{2}\gamma^2 \frac{v(x)}{\phi(x)}, \, \frac{u(x)}{\phi(x)}\right)_{2\gamma e^{-2\gamma |x|}dx}
\\
&=-\frac{1}{2}\int_{\IR_+}v''(x)u(x)dx+\frac{\gamma^2}{2}\int_{\IR_+}v(x)u(x)dx
\\
&=\frac{1}{2}\int_{\IR_+}u'(x)v'(x)dx+\frac{\gamma^2}{2}\int_{\IR_+}v(x)u(x)dx.
\end{align*}
Consequently, \eqref{G+} holds and $(\mathcal{A}^{(+)},\mathcal{G}^{(+)})$ is a regular Dirichlet form on $L^2(\IR_+\cup \{0\})$ associated with the part Brownian motion on $\IR_+$   killed at the ratio $\gamma^2/2$. Recall that $(\mathcal{E}^{(+),0},\mathcal{F}^{(+),0})$ is an $h$-transform of $(\mathcal{A}^{(+)},\mathcal{G}^{(+)})$, and that the density of $X^{(+), 0}$ is the same as $p_{E_2\backslash \{\b0\}}(t,\iota^{-1}(x),\iota^{-1}(y))$. Since the transition density of  the part process of 1-dimensional Brownian motion restricted  on $(0, +\infty)$  is explicitly known,   we can eventually conclude 
\begin{equation*}
p_{E_2\backslash \{\b0\}}(t,x,y)=\frac{1}{\sqrt{2\pi t}}e^{-\gamma^2 t/2}\left(e^{-|x-y|^2/(2t)}-e^{-|x+y|^2/(2t)}\right)\frac{1}{\phi_\gamma (x)\phi_\gamma (y)}, \,x,y\in E_2\backslash \{\b0\}, \, t>0,
\end{equation*}
where $\phi_\gamma (x)=\sqrt{2\gamma}e^{-\gamma |x|}$, i.e., 
\begin{equation*}
p_{E_2\backslash \{\b0\}}(t,x,y)=\frac{1}{\gamma\sqrt{8\pi t}}e^{-\gamma^2 t/2+\gamma(|x|+|y|)}\left(e^{-|x-y|^2/(2t)}-e^{-|x+y|^2/(2t)}\right), \, x,y\in E_2\backslash \{\b0\}, \, t>0.
\end{equation*}
\end{proof}
\qed

\begin{remark}
We notice that $ E_2\backslash \{\b0\}=\{(0_3, x): x\in \mathbb{R}_+\}\cong \mathbb{R}_+$, and the dBM $M$ restricted on $E_2\backslash \{\b0\}$ has the same distribution as $Y$ restricted on $\IR_-$ by switching the sign. Also $Y$ is symmetric about zero, i.e., 
\begin{equation*}
p^Y_{\IR_-}(t,-x,-y)=p^Y_{\IR_+}(t,x,y), \quad t>0, \,x,y\in (0, +\infty).
\end{equation*}
Namely,
\begin{align}
p^Y_{\IR_+}(t,|x|,|y|)&=p^Y_{\IR_-}(t,-|x|,-|y|)=p^M_{E_2\backslash \{\b0\}}(t,x,y) \nonumber
\\
&=\frac{1}{\gamma\sqrt{8\pi t}}e^{-\gamma^2 t/2+\gamma(|x|+|y|)}\left(e^{-|x-y|^2/(2t)}-e^{-|x+y|^2/(2t)}\right),\, x,y\in E_2\backslash \{\b0\},\, t>0. \label{density-pY_R-}
\end{align}
\end{remark}

The next proposition follows from the symmetry of $Y$ with respect to $\wt{\mathfrak{m}}$.
\begin{proposition}\label{continuity-density-Y-m}
$Y$ has a transition density  $\{p^Y(t, x,y):t>0, x, y\in \IR\}$  with respect to  $\wt{\mathfrak{m}}$ which is jointly continuous on $(0, +\infty)\times \IR\times \IR$.
\end{proposition}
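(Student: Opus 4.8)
To prove Proposition~\ref{continuity-density-Y-m} the statement amounts to absolute continuity of the one-dimensional transition kernel of $Y$ together with joint regularity of the resulting density, and the $\wt{\fm}$-symmetry of $Y$ is the organizing principle throughout. I would extract everything at once from the SDE characterization \eqref{Y-SDE-alpha=gamma}. Writing $b(x):=-\gamma\,\mathrm{sgn}(x)$ for $x\neq 0$ (with $b(0):=0$), \eqref{Y-SDE-alpha=gamma} reads $dY_t=dB_t+b(Y_t)\,dt$ with $b$ bounded; Novikov's condition is then trivial, and Girsanov's theorem gives, for every bounded Borel $f$ and every $t>0$,
\[
\IE_x[f(Y_t)]=\IE_x\!\left[f(B_t)\,\exp\!\Big(\int_0^t b(B_s)\,dB_s-\tfrac12\int_0^t b(B_s)^2\,ds\Big)\right],
\]
the right-hand expectation being taken for a standard Brownian motion $B$ started at $x$. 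Since $b^2\equiv\gamma^2$ and, by Tanaka's formula, $\int_0^t b(B_s)\,dB_s=-\gamma|B_t|+\gamma|x|+\gamma L_t^0(B)$ with $L^0(B)$ the semimartingale local time of $B$ at $0$, this simplifies to $\IE_x[f(Y_t)]=e^{\gamma|x|-\gamma^2 t/2}\,\IE_x\!\left[f(B_t)\,e^{-\gamma|B_t|+\gamma L_t^0(B)}\right]$.

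Next I would disintegrate the Brownian expectation over the endpoint $B_t=y$ through the Brownian bridge. This shows at once that $\IP_x(Y_t\in dy)$ is absolutely continuous with respect to Lebesgue measure — hence, since $\wt{\fm}$ has the strictly positive continuous density $2\gamma e^{-2\gamma|y|}$, with respect to $\wt{\fm}$ — and that
\[
p^{Y}(t,x,y)=\frac{1}{2\gamma}\,g_t(x,y)\,e^{\gamma(|x|+|y|)-\gamma^2 t/2}\;\IE^{\,x\to y}_{t}\!\big[e^{\gamma L_t^0}\big],
\]
where $g_t(x,y)$ is the Gaussian transition density, $\IE^{x\to y}_{t}$ denotes expectation for the Brownian bridge from $x$ to $y$ over $[0,t]$, and $L_t^0$ is its local time at $0$. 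The bridge local-time Laplace transform $\IE^{x\to y}_{t}[e^{\gamma L_t^0}]$ has finite exponential moments of all orders (the bridge local time vanishes outright with positive probability when $x,y$ have the same sign) and admits a classical closed-form expression that is manifestly jointly continuous on $(0,\infty)\times\IR\times\IR$; combined with the joint continuity of $g_t$ and of the exponential prefactor, this yields the joint continuity of $p^{Y}$. Finally, the $\wt{\fm}$-symmetry of $(P_t^Y)$ forces $p^{Y}(t,x,y)=p^{Y}(t,y,x)$ for $\wt{\fm}\otimes\wt{\fm}$-a.e.\ $(x,y)$, hence everywhere by the continuity just obtained — in agreement with the evident invariance of the displayed formula under $x\leftrightarrow y$.

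The one delicate point — the ``hard part'' — is the lack of smoothness of $b$, equivalently the kink of the density of $\wt{\fm}$, at the origin; but this is precisely what Tanaka's formula absorbs, turning the formally ill-defined second-order term into the harmless functional $\gamma L_t^0(B)$, so in fact there is no serious obstacle. If one prefers not to compute the bridge functional explicitly, two alternatives give the same conclusion: (a) the generator of $Y$ is the divergence-form operator $\tfrac12\,e^{2\gamma|x|}\tfrac{d}{dx}\big(e^{-2\gamma|x|}\tfrac{d}{dx}\,\cdot\,\big)$, whose coefficient $2\gamma e^{-2\gamma|x|}$ is locally bounded above and below, so De Giorgi--Nash--Moser parabolic regularity yields local H\"older continuity of the Lebesgue density of $Y$ and hence of $p^{Y}$; or (b) one may simply invoke the classical fact that every regular one-dimensional diffusion without killing possesses a jointly continuous, symmetric transition density with respect to its speed measure, noting that $\wt{\fm}$ is a constant multiple of the speed measure of $Y$.
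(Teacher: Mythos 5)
Your proof is correct, but it takes a genuinely different route from the paper's. The paper's own argument is much terser: it invokes Zhang's Gaussian bounds (Theorem A of [Z1]) to get existence of a Lebesgue transition density for a Brownian motion with bounded drift, converts to the $\wt{\fm}$-density by dividing by $2\gamma e^{-2\gamma|y|}$, and then deduces joint continuity from the $\wt{\fm}$-symmetry of $Y$ — a step it does not elaborate. You instead produce the density explicitly: Girsanov with the bounded drift $b=-\gamma\,\mathrm{sgn}$, Tanaka's formula to convert $\int_0^t b(B_s)\,dB_s$ into $-\gamma|B_t|+\gamma|x|+\gamma L_t^0(B)$, and disintegration over the Brownian bridge, yielding $p^Y(t,x,y)$ as a Gaussian kernel times the bridge local-time exponential moment. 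This is sound: weak uniqueness for the SDE with bounded measurable drift identifies the law of $Y$ with the Girsanov tilt, the joint law of $(B_t,L_t^0)$ is classical and gives finiteness and joint continuity of $g_t(x,y)\,\IE^{x\to y}_t[e^{\gamma L_t^0}]$, and symmetry then upgrades from a.e.\ to everywhere. What each approach buys: the paper's is two lines but outsources both existence (to [Z1]) and continuity (to unstated general theory), whereas yours is self-contained and actually quantitative, delivering a closed-form expression one could in principle match against \eqref{density-RBM-drift} and \eqref{density-pY_R-}. Your alternative (b) — McKean's theorem that a regular one-dimensional diffusion has a jointly continuous symmetric density with respect to its speed measure, of which $\wt{\fm}$ is indeed a constant multiple here — is arguably the cleanest possible proof and closer in spirit to what the paper's last sentence is implicitly appealing to. One small quibble: your parenthetical remark that the bridge local time ``vanishes outright with positive probability when $x,y$ have the same sign'' is true but irrelevant to the finiteness of $\IE^{x\to y}_t[e^{\gamma L_t^0}]$; what matters is the Gaussian tail of the bridge local time, which you could state instead.
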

\begin{proof}
Since $Y$ is Brownian motion with bounded drift,  using the same argument as that for Theorem A in Zhang \cite[\S 4]{Z1}, we can show that the transition density of $Y$ with respect to Lebesgue measure exists. Denote this density by $\wh{p}^{Y}(t,x,y)$. It follows that the density of $Y$ with respect to $\wt{\mathfrak{m}}$ also exists and satisfies 
\begin{equation}\label{densities-Y}
\wh{p}^Y(t,x,y)dy = p^Y(t,x,y)\wt{\mathfrak{m}}(dy)=p^{Y}(t,x,y)2\gamma e^{-2\gamma |y|}dy, \quad \text{on }(0, +\infty)\times \IR\times \IR.
\end{equation}
Since $Y$ is a symmetric diffusion process with respect to $\wt{\mathfrak{m}}$, $p^Y(t,x,y)$ is jointly continous on $(0, +\infty)\times \IR\times \IR$.
\end{proof}
\qed

The following corollary is an immediate result following from Proposition \ref{continuity-density-Y-m} and \eqref{densities-Y}
\begin{corollary}\label{joint-cont-Y}
$Y$ has a transition density  $\wh{p}^Y(t, x,y)$  with respect to 1-dimensional Lebesgue measure which is jointly continuous on $(0, +\infty)\times \IR\times \IR$.
\end{corollary}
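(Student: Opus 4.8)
The plan is to deduce the corollary immediately from Proposition~\ref{continuity-density-Y-m} together with the change-of-measure identity~\eqref{densities-Y}. That identity expresses the Lebesgue density of $Y$ in terms of its $\wt{\fm}$-density as
\[
\wh{p}^Y(t,x,y) = p^Y(t,x,y)\cdot 2\gamma\,\re^{-2\gamma|y|},\qquad (t,x,y)\in(0,+\infty)\times\IR\times\IR .
\]
First I would invoke Proposition~\ref{continuity-density-Y-m} to get that $(t,x,y)\mapsto p^Y(t,x,y)$ is jointly continuous on $(0,+\infty)\times\IR\times\IR$. Next I would note that the weight $(t,x,y)\mapsto 2\gamma\,\re^{-2\gamma|y|}$ depends on $y$ alone and is continuous on all of $\IR$, since $y\mapsto|y|$ is continuous (in particular across the junction $y=0$). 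Since a product of two jointly continuous real-valued functions is jointly continuous, the right-hand side above --- and hence $\wh{p}^Y$ --- is jointly continuous on $(0,+\infty)\times\IR\times\IR$, which is exactly the assertion.

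There is essentially no obstacle here. The only point at which one might pause is the continuity of the exponential weight at $y=0$, where $\wt{\fm}$ switches, in the radial picture, from the $E_1$-side normalization to the $E_2$-side one; but $\re^{-2\gamma|y|}$ is continuous (indeed smooth away from $0$ and still continuous at $0$), so nothing goes wrong. If one preferred an argument that does not pass through the explicit form of $\wt{\fm}$, one could instead cite the general transfer principle for densities of symmetric Markov processes (cf.\ \cite[Theorem~A.2.17]{CF}): joint continuity of the transition density relative to one fully supported reference measure passes to the density relative to another such measure provided the Radon--Nikodym derivative between them is continuous and strictly positive, which is the case here. Either way the corollary follows from Proposition~\ref{continuity-density-Y-m} with no further work.
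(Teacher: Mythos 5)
Your proposal is correct and is exactly the argument the paper intends: the corollary is stated as an immediate consequence of Proposition~\ref{continuity-density-Y-m} together with the identity~\eqref{densities-Y}, i.e.\ multiplying the jointly continuous $\wt{\fm}$-density by the continuous, strictly positive weight $2\gamma e^{-2\gamma|y|}$. Your remark about continuity at $y=0$ and the alternative transfer-principle phrasing are fine but not needed.
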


Before presenting the proof of finding the global transition density for $M$, we first record the following  two  simple propositions which  will be used  repeatedly.

\begin{proposition}\label{strongMarkov}
Given a strong Markov process $X$ with state space $E^{(X)}$ equipped with measure $m^{(X)}$.  Assume $X$  is continuous,  and $X$  has  transition density $\{p^X(t,x, y):\, t>0, x,y\in E^{(X)}\}$ with respect to $m^{(X)}$. Given $z\in E^{(X)}$, it holds that 
\begin{equation}
p^X(t, x,y; \;\sigma_{\{z\}}\le t)=\int_0^t p^X(t-s, z, y)\IP_x\left[\sigma_{\{z\}} \in ds \right].
\end{equation}
\end{proposition}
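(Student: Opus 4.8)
\textbf{Proof proposal for Proposition~\ref{strongMarkov}.}

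The plan is to decompose the event contributing to $p^X(t,x,y;\,\sigma_{\{z\}}\le t)$ according to the first hitting time of $z$, and then apply the strong Markov property at $\sigma_{\{z\}}$. Recall from the notation set up in \eqref{e:1.6}--\eqref{e:1.7} that for a nonnegative test function $f$ on $E^{(X)}$,
\begin{equation*}
\int_{E^{(X)}} p^X(t,x,y;\,\sigma_{\{z\}}\le t)\, f(y)\, m^{(X)}(dy) = \IE_x\left[ f(X_t);\; \sigma_{\{z\}}\le t\right].
\end{equation*}
First I would fix such an $f$ and rewrite the right-hand side. On the event $\{\sigma_{\{z\}}\le t\}$ the process is continuous and, by quasi-left-continuity / continuity of $X$, we have $X_{\sigma_{\{z\}}}=z$. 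Applying the strong Markov property at the stopping time $\sigma_{\{z\}}$ and conditioning on $\mathcal F_{\sigma_{\{z\}}}$,
\begin{equation*}
\IE_x\left[ f(X_t);\; \sigma_{\{z\}}\le t\right]
= \IE_x\left[ \mathbf 1_{\{\sigma_{\{z\}}\le t\}}\,\IE_{X_{\sigma_{\{z\}}}}\!\left[ f(X_{t-s})\right]\Big|_{s=\sigma_{\{z\}}}\right]
= \IE_x\left[ \mathbf 1_{\{\sigma_{\{z\}}\le t\}}\, P^X_{t-\sigma_{\{z\}}} f(z)\right],
\end{equation*}
where $P^X_r f(z)=\int_{E^{(X)}} p^X(r,z,y) f(y)\, m^{(X)}(dy)$.

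Next I would disintegrate the outer expectation over the law of $\sigma_{\{z\}}$ under $\IP_x$. Writing $\IP_x(\sigma_{\{z\}}\in ds)$ for this law restricted to $(0,t]$, we obtain
\begin{equation*}
\IE_x\left[ \mathbf 1_{\{\sigma_{\{z\}}\le t\}}\, P^X_{t-\sigma_{\{z\}}} f(z)\right]
= \int_0^t P^X_{t-s} f(z)\, \IP_x(\sigma_{\{z\}}\in ds)
= \int_0^t \left( \int_{E^{(X)}} p^X(t-s,z,y) f(y)\, m^{(X)}(dy)\right) \IP_x(\sigma_{\{z\}}\in ds).
\end{equation*}
An application of Tonelli's theorem (everything is nonnegative) interchanges the two integrations, giving
\begin{equation*}
\IE_x\left[ f(X_t);\; \sigma_{\{z\}}\le t\right]
= \int_{E^{(X)}} \left( \int_0^t p^X(t-s,z,y)\, \IP_x(\sigma_{\{z\}}\in ds)\right) f(y)\, m^{(X)}(dy).
\end{equation*}
Since this holds for every nonnegative measurable $f$, the inner bracket equals $p^X(t,x,y;\,\sigma_{\{z\}}\le t)$ for $m^{(X)}$-a.e.\ $y$, which is the claimed identity.

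The only genuinely delicate point is the legitimacy of the strong Markov step as applied at $\sigma_{\{z\}}$ together with the identification $X_{\sigma_{\{z\}}}=z$ on $\{\sigma_{\{z\}}<\infty\}$: this uses that $X$ is a continuous (hence quasi-left-continuous) strong Markov process and that $\{z\}$ is closed, so the first hitting time is a stopping time and the process is actually \emph{at} $z$ at that time. A secondary technical nuisance is that the argument produces the identity only for $m^{(X)}$-a.e.\ $y$; in the applications in Section~\ref{Proof-HK} both sides will be shown (or are already known) to be jointly continuous in the relevant variables, so the a.e.\ equality upgrades to an everywhere identity, but strictly speaking the bare proposition is an $m^{(X)}$-a.e.\ statement unless one invokes such continuity. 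I would remark on this rather than belabor it.
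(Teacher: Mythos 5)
Your proof is correct and follows essentially the same route as the paper's: decompose at the first hitting time $\sigma_{\{z\}}$, use continuity of the paths to identify $X_{\sigma_{\{z\}}}=z$, apply the strong Markov property there, and then disintegrate over the law of $\sigma_{\{z\}}$ under $\IP_x$. The only differences are cosmetic — the paper tests against indicators of open sets and justifies the disintegration by a dyadic discretization of the stopping time (citing Port--Stone for the first identity), whereas you test against general nonnegative $f$ and invoke the image-measure formula directly; your closing caveat that the bare identity is obtained only for $m^{(X)}$-a.e.\ $y$ is accurate and is implicitly handled in the paper by the continuity of the densities used in Section~\ref{Proof-HK}.
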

\begin{proof}
First by the same computation as that in \cite[pp.13]{PortStone}, it holds for any open set $A\subset E^{(X)}$ that
\begin{equation*}
\IP_x\left[X_t \in A;\, \sigma_{\{z\}}\in A\right]=\IE_x\left[\int_A p(t-\sigma_{\{z\}}, X_{\sigma\{z\}}, y)dy;\, \sigma_{\{z\}}\le t\right].
\end{equation*}
Now we define a sequence of discrete approximation to $\sigma_{\{z\}}$ as follows:
\begin{equation*}
\sigma_n:=\sum_{k=1}^\infty\frac{kt}{2^n}\cdot\mathbf{1}_{\{\frac{kt}{2^n}\le \sigma_{\{z\}}<\frac{(k+1)t}{2^n}\}}.
\end{equation*}
$\sigma_n$ increases to $\sigma_{\{z\}}$ on $\{\sigma_{\{z\}}\le t\}$. 
The facts that $X$ is continuous and that $A$ is open in $E^{(X)}$ now implies
\begin{align*}
\IP_x\left[X_t \in A;\, \sigma_{\{z\}}\in A\right]&= \IE_x\left[\int_A p(t-\sigma_{\{z\}}, z, y)dy;\, \sigma_{\{z\}}\le t\right]
\\
&=\IE_x\left[\IP_z\left[X_{t-\sigma_{\{z\}}}\in A\right]; \sigma_{\{z\}}\le t\right]
\\
&=\lim_{n\rightarrow \infty} \IE_x\left[\IP_z\left[X_{t-\sigma_n}\in A\right]; \sigma_n\le t\right]
\\
&=\lim_{n\rightarrow \infty}\sum_{k=1}^{2^n}\IE_x\left[\IP_z\left[X_{t-\frac{kt}{2^n}}\in A\right];  \sigma_{n}= \frac{kt}{2^n}\right]
\\
&=\lim_{n\rightarrow \infty} \sum_{k=1}^{2^n} \IP_z\left[X_{t-\frac{kt}{2^n}}\in A\right] \IP_x\left[  \frac{kt}{2^n}\le \sigma_{\{z\}}< \frac{(k+1)t}{2^n} \right]
\\
&= \int_0^t \IP_z\left[X_{t-s}\in A\right] \cdot \IP_x\left[\sigma_{\{z\}}\in ds\right].
\end{align*}
Since the above holds for any open set  $A
\subset E^{(X)}$,   the desired statement now follows from  \eqref{e:1.7}.
\end{proof}
\qed

\begin{proposition}
It holds for all $x, y\in E\backslash \{0\}$ and $t>0$ that
\begin{description}
\item{(i)}
\begin{equation}\label{e:4.9}
\wh{p}^{Y}(t, |x|, |y|; \sigma_{\{0\}}<t)+\wh{p}^{Y}(t, |x|, -|y|)+\wh{p}^{Y}_{\IR_+}(t, |x|, |y|)=\wh{p}^{\wh{Y}}(t, |x|, |y|).
\end{equation}
 \item{(ii)}
\begin{equation}\label{e:26}
\wh{p}^{Y}(t, |x|, |y|; \sigma_{\{0\}}<t)=\wh{p}^{Y}(t, |x|, -|y|)=\frac{1}{2}\left(\wh{p}^{\wh{Y}}(t, |x|, |y|)-\wh{p}^Y_{\IR_+}(t, |x|, |y|)\right).
\end{equation}
\end{description}
\end{proposition}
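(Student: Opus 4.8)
The plan is to exploit the symmetry $-Y\overset{d}{=}Y$ established in Proposition \ref{Y-SDE-alpha=gamma}, together with a pathwise decomposition of the motion of $Y$ according to whether it has visited $0$ by time $t$. First I would prove part (i). Fix $x,y\in E\setminus\{\b0\}$ and $t>0$. Starting from $|x|>0$, the process $Y$ either reaches $0$ before time $t$ or it does not; on the complementary event it stays in $(0,+\infty)$ throughout $[0,t]$. Decomposing $\wh p^{\wh Y}(t,|x|,|y|)$ — the Lebesgue density that $\wh Y=|Y|$ travels from $|x|$ to $|y|$ — according to this dichotomy, the ``no visit to $0$'' contribution is exactly $\wh p^Y_{\IR_+}(t,|x|,|y|)$, the Lebesgue density of $Y$ killed at $0$. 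On the ``visit $0$ before $t$'' event, $|Y_t|=|y|$ means either $Y_t=|y|$ or $Y_t=-|y|$, and these two are disjoint outcomes contributing $\wh p^Y(t,|x|,|y|;\sigma_{\{0\}}<t)$ and $\wh p^Y(t,|x|,-|y|)$ respectively. (Here I should note that since $\sigma_{\{0\}}<t$ automatically holds if $Y$ crosses from positive to $Y_t=-|y|<0$, the second term needs no extra qualifier; and that the event $Y_t=-|y|$ has density only through paths that have hit $0$, so it is consistent to drop ``$\sigma_{\{0\}}<t$'' there.) Summing the three disjoint contributions gives \eqref{e:4.9}.

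For part (ii), I would invoke the strong Markov property at $\sigma_{\{0\}}$ together with the symmetry of $Y$ about $0$. By Proposition \ref{strongMarkov} (applied to $X=Y$, $z=0$), both $\wh p^Y(t,|x|,|y|;\sigma_{\{0\}}<t)$ and $\wh p^Y(t,|x|,-|y|)$ are obtained by integrating, against the law of $\sigma_{\{0\}}$ under $\IP_{|x|}$, the transition density of $Y$ from $0$ to $|y|$, resp.\ from $0$ to $-|y|$, over the remaining time $t-s$. But since $-Y\overset{d}{=}Y$, the process started at $0$ is symmetric about $0$, so $\wh p^Y(t-s,0,|y|)=\wh p^Y(t-s,0,-|y|)$ for every $s$; integrating gives
\[
\wh p^Y(t,|x|,|y|;\sigma_{\{0\}}<t)=\wh p^Y(t,|x|,-|y|).
\]
Combining this equality with \eqref{e:4.9} — which now reads $2\,\wh p^Y(t,|x|,-|y|)+\wh p^Y_{\IR_+}(t,|x|,|y|)=\wh p^{\wh Y}(t,|x|,|y|)$ — and solving for the common value yields \eqref{e:26}.

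The main obstacle I anticipate is making the ``decomposition of the density'' in part (i) rigorous rather than heuristic. Densities are not events, so one cannot literally partition $\wh p^{\wh Y}$; instead I would test against an arbitrary nonnegative $f$ on $\IR$, write $\IE_{|x|}[f(|Y_t|)]=\IE_{|x|}[f(|Y_t|);\,\tau_{(0,\infty)}>t]+\IE_{|x|}[f(|Y_t|);\,\sigma_{\{0\}}\le t]$, and on the second term further split according to the sign of $Y_t$, using $f(|Y_t|)=f(Y_t)\mathbf 1_{\{Y_t>0\}}+f(-Y_t)\mathbf 1_{\{Y_t<0\}}$ (the set $\{Y_t=0\}$ being $\wt{\mathfrak m}$-null, hence negligible by Corollary \ref{joint-cont-Y}). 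Each resulting expectation is then identified with an integral of $f$ against one of the four densities via \eqref{e:1.7} and the analogous defining relations, and since $f$ is arbitrary the densities themselves must satisfy \eqref{e:4.9} for a.e.\ $|y|$ — and then everywhere, by the joint continuity supplied by Corollary \ref{joint-cont-Y} and the explicit continuity of $\wh p^{\wh Y}$ and $\wh p^Y_{\IR_+}$ from \eqref{density-RBM-drift} and \eqref{density-pY_R-}. A secondary technical point is confirming that $\sigma_{\{0\}}$ has no atom at $0$ under $\IP_{|x|}$ for $|x|>0$ and that $\IP_{|x|}(\sigma_{\{0\}}<\infty)=1$, both of which follow from Proposition \ref{PRO2} and the SDE \eqref{Y-SDE-alpha=gamma}, so that Proposition \ref{strongMarkov} applies cleanly.
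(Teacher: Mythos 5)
Your proposal is correct and follows essentially the same route as the paper: part (i) by partitioning the event $\{|Y_t|\approx|y|\}$ according to whether $\sigma_{\{0\}}\le t$ and the sign of $Y_t$, and part (ii) by applying the strong Markov property at $\sigma_{\{0\}}$ (Proposition~\ref{strongMarkov}) together with the symmetry $-Y\overset{d}{=}Y$ to identify $\wh{p}^Y(t,|x|,|y|;\sigma_{\{0\}}\le t)$ with $\wh{p}^Y(t,|x|,-|y|)$. Your added care about upgrading the a.e.\ identity to a pointwise one via joint continuity is a reasonable refinement of what the paper leaves implicit.
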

\begin{proof}
Since $\wh{Y}=|Y|$, for any $x, y\in E$,  any $\delta>0$ such that $[|y|-\delta, |y|+\delta]\subset (0, +\infty)$, it holds
\begin{equation*}
\IP_{|x|}\left(Y_t\in [|y|-\delta, |y|+\delta]\right)+\IP_{|x|}\left(Y_t\in [-|y|-\delta, -|y|+\delta]\right)=\IP_{|x|}\left(\wh{Y}_t\in [|y|-\delta, |y|+\delta]\right).
\end{equation*} 
Therefore, 
\begin{align*}
\IP_{|x|}\left(Y_t\in [|y|-\delta, |y|+\delta]; \sigma_{\{0\}}>t\right)&+\IP_{|x|}\left(Y_t\in [|y|-\delta, |y|+\delta]; \sigma_{\{0\}}\le t\right)
\\
&+\IP_{|x|}\left(Y_t\in [-|y|-\delta, -|y|+\delta]\right)=\IP_{|x|}\left(\wh{Y}_t\in [|y|-\delta, |y|+\delta]\right).
\end{align*}
This justifies \eqref{e:4.9}.  To justify \eqref{e:26}, observing that $Y$ is  strongly Markov and symmetric about $0$, by Proposition \ref{strongMarkov} we have 
\begin{align}
\wh{p}^{Y}(t, |x|, |y|; \sigma_{\{0\}}\le t)&=\int_0^t \IP_{|x|}\left(\sigma^Y_{\{0\}}\in ds\right)\wh{p}^{Y}(t-s, 0, |y|)\nonumber
\\
&=\int_0^t \IP_{|x|}\left(\sigma^Y_{\{0\}}\in ds\right)\wh{p}^{Y}(t-s, 0, -|y|) =\wh{p}^{Y}(t, |x|, -|y|).\label{e:25}
\end{align}
Now \eqref{e:26} readily  follows  from   \eqref{e:4.9}.
\end{proof}
\qed
In the following, we divide our discussion into four cases  depending on the locations of $x, y$. Note that due to the symmetry of $M$ with respect to $m_\gamma$, they essentially cover all the cases for $x,y\in E$.
\begin{description}
\item{Case (i)}:  $x, y\in  E_1\backslash \{\b0\}$;
\item{Case (ii)}: $x\in E_1\backslash \{\b0\}$, $y\in E_2\backslash \{\b0\}$;
\item{Case (iii)}: $x, y\in E_2\backslash \{\b0\}$;
\item{Case (iv)}: $x=\b0$, $y\in E$.  
\end{description}

\subsection{Case (i): both  $x, y \in E_1$}
We state the result for this case as the following proposition
\begin{proposition}
\begin{equation*}
p(t,x,y)=q(t,x, y)+\frac{1}{2}\left(p^{\wh{Y}}(t, |x|, |y|)-p^Y_{\IR_+}(t, |x|, |y|)\right), \quad  x,y\in E_1\backslash \{\b0\}.
\end{equation*}
\end{proposition}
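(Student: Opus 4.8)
The plan is to separate, for $x,y\in E_1\setminus\{\b0\}$, the paths of $M$ from $x$ to $y$ in time $t$ according to whether $\b0$ is visited before time $t$, and to push the ``hitting'' part down to the one-dimensional radial picture. Since $\b0$ is the only point joining $E_1$ and $E_2$ and $M$ is continuous, the part process of $M$ on $E\setminus\{\b0\}$ started in $E_1\setminus\{\b0\}$ stays in $E_1\setminus\{\b0\}$ until it is killed, so it coincides with the part process on $E_1\setminus\{\b0\}$ killed at $\b0$, whose $m_\gamma$-density is $q(t,x,y)$ (Proposition~\ref{HK-killed-dBM}). Hence, by the definition \eqref{e:1.6} with $K=\{\b0\}$,
\[
p(t,x,y)=q(t,x,y)+p(t,x,y;\,\sigma_{\{\b0\}}\le t),\qquad x,y\in E_1\setminus\{\b0\},
\]
and the task reduces to showing $p(t,x,y;\,\sigma_{\{\b0\}}\le t)=\frac{1}{2}\left(p^{\wh Y}(t,|x|,|y|)-p^Y_{\IR_+}(t,|x|,|y|)\right)$.

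For the hitting term I would first condition on the first visit to $\b0$. Since $M$ is a continuous strong Markov process with a transition density, Proposition~\ref{strongMarkov} with $z=\b0$ gives
\[
p(t,x,y;\,\sigma_{\{\b0\}}\le t)=\int_0^t p(t-s,\b0,y)\,\IP_x\left(\sigma_{\{\b0\}}\in ds\right),
\]
and, since $\wh Y=|M|$ and $M$ is continuous, pathwise $\sigma^M_{\{\b0\}}=\sigma^{\wh Y}_{\{0\}}$, so $\IP_x(\sigma^M_{\{\b0\}}\in ds)=\IP_{|x|}(\sigma^{\wh Y}_{\{0\}}\in ds)$. The crucial input is then the identity $p(t-s,\b0,y)=\frac{1}{2}\,p^{\wh Y}(t-s,0,|y|)$ for $y\in E_1\setminus\{\b0\}$.

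To prove this identity I would use two observations. First, by the rotational invariance of $M$ on $E_1$ (Section~\ref{SEC2-1}), $p(t-s,\b0,\cdot)$ is constant on each sphere $\{|z|=\rho\}$, and the image of $m_\gamma|_{E_1}$ under $z\mapsto|z|$ is precisely $\wt{\fm}|_{[0,\infty)}$; consequently the sub-probability $\IP_\b0(M_{t-s}\in E_1,\,|M_{t-s}|\in\cdot)$ has $\wt{\fm}$-density $\rho\mapsto p(t-s,\b0,z)\big|_{|z|=\rho}$ on $(0,\infty)$. Second, by the SDE characterization \eqref{Y-SDE-alpha=gamma} the law of the signed radial process $Y$ is invariant under $y\mapsto-y$, so $-Y\overset{d}{=}Y$; since $M_0=\b0$ forces $Y_0=0$, this gives $\IP_\b0(M_{t-s}\in E_1,\,|M_{t-s}|\in\cdot)=\IP_\b0(M_{t-s}\in E_2,\,|M_{t-s}|\in\cdot)=\frac{1}{2}\,\IP_\b0(\wh Y_{t-s}\in\cdot)$ on $(0,\infty)$. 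Comparing $\wt{\fm}$-densities yields the identity (equivalently, one may run this through \eqref{relationship_p_phat} and the explicit $\psi_\gamma$ of \eqref{EQ1PGX}, in which case the surface factors $4\pi|y|^2$ and the exponential weights cancel and the factor $\frac{1}{2}$ drops out after a routine computation).

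Substituting the identity back and applying Proposition~\ref{strongMarkov} once more, now to the one-dimensional reflected diffusion $\wh Y$ with $z=0$, gives
\[
p(t,x,y;\,\sigma_{\{\b0\}}\le t)=\frac{1}{2}\int_0^t p^{\wh Y}(t-s,0,|y|)\,\IP_{|x|}\left(\sigma^{\wh Y}_{\{0\}}\in ds\right)=\frac{1}{2}\,p^{\wh Y}\left(t,|x|,|y|;\,\sigma_{\{0\}}\le t\right).
\]
Finally I would split $\wh Y$ into its part process on $\IR_+$ killed at $0$ and the complementary event $\{\sigma^{\wh Y}_{\{0\}}\le t\}$; before hitting $0$ the processes $\wh Y$ and $Y$ obey the same equation $dZ=dB-\gamma\,dt$ and are killed in the same way, so $p^{\wh Y}_{\IR_+}=p^Y_{\IR_+}$ (a fact also contained in \eqref{e:26}), and therefore $p^{\wh Y}(t,|x|,|y|;\sigma_{\{0\}}\le t)=p^{\wh Y}(t,|x|,|y|)-p^Y_{\IR_+}(t,|x|,|y|)$, which gives the claimed formula. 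The step I expect to be the main obstacle is the geometric identity $p(t-s,\b0,y)=\frac{1}{2}p^{\wh Y}(t-s,0,|y|)$: one must argue cleanly that after a visit to $\b0$ the process forgets which component it arrived from and re-enters $E_1$ and $E_2$ symmetrically, while simultaneously keeping the three reference measures — $m_\gamma|_{E_1}$, the radial speed measure $2\gamma e^{-2\gamma u}\,du$, and $\wt{\fm}$ — mutually consistent. Everything else is the strong Markov property applied twice together with the two path decompositions.
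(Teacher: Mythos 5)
Your proposal is correct and follows essentially the same route as the paper: the decomposition $p=q+p(\cdot;\sigma_{\{\b0\}}\le t)$, the strong Markov property at the first hit of $\b0$ (Proposition~\ref{strongMarkov}), the rotational/radial identification of $p(t,\b0,\cdot)$ on $E_1$ with the law of the radial process, the symmetry $Y\overset{d}{=}-Y$, and the killed/unkilled splitting that produces $\frac{1}{2}\bigl(p^{\wh Y}-p^Y_{\IR_+}\bigr)$. The only cosmetic difference is that you extract the factor $\frac{1}{2}$ at the $\b0$-to-$y$ leg via $p(t-s,\b0,y)=\frac{1}{2}p^{\wh Y}(t-s,0,|y|)$ and then decompose $p^{\wh Y}(t,|x|,|y|;\sigma_{\{0\}}\le t)$, whereas the paper carries the unsigned factor through $\wh p^{Y}(t,|x|,|y|;\sigma_{\{0\}}\le t)$ and invokes the identity \eqref{e:26} at the end; the ingredients are identical.
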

\begin{proof}
For this case, we recall   that the density of $M^{E_1\backslash \{\b0\}}$,  $q(t,x, y)$,   has been computed in Proposition \ref{HK-killed-dBM}.  Denote by $\wh{p}^M$ the transition density of $M$ with respect to Lebesgue measure. We first notice that for $y\in E_1\backslash \{\b0\}$, $\wh{p}^M(t, 
\b0, y)$ is rotationally invariant in $y$.  Therefore, for any $y\in E_1\backslash \{\b0\}$, we may define
\begin{equation}\label{e:4.12}
\bar{p}^M(t, \b0, r):=\wh{p}^M(t,\b0, y), \quad \text{for } r=|y|. 
\end{equation}
Using this notation and polar coordinates, we have for any pair of  $a>b>0$, 
\begin{align*}
\int_a^b \wh{p}^{Y}(t,0,r)dr&=  \IP^{Y}_0( a\leq Y_t\leq b) = \IP^M_{\b0} ( M_t \in E_1 \hbox{ with }  a\leq |M_t |\leq b) \\
&=  \int_{\{y\in {E_1}:  a \leq |y| \leq b\}}\wh{p}^M (t,\b0,y)dy 
\\
&=\int_{a}^b 4\pi r^2\bar{p}^M(t,\b0,r)dr.
\end{align*}
This implies that 
\begin{equation}\label{e:4.13}
\wh{p}^{Y}(t,0,r)=4\pi r^2\bar{p}^M(t,\b0,r) \stackrel{\eqref{e:4.12}}{=}4\pi |y|^2 \wh{p}^M(t,\b0, y), \quad \text{for all }y\in E_1\backslash \{\b0\} \text{ and } r=|y|.
\end{equation}
Now on account of Proposition \ref{strongMarkov}, we have for $x,y\in E_1\backslash \{\b0\}$ that
\begin{align}
p(t,x,y)&= q(t,x,y)  +p(t,x,y;\, \sigma_{\{0\}}\le t)
\\
&=q(t,x, y)+\int_{0}^t \IP_x \left(\sigma^M_{\{\b0\}}\in ds\right)p^M(t-s, \b0, y) \nonumber
\\
&=q(t,x, y)+\int_{0}^t \IP_x \left(\sigma^M_{\{\b0\}}\in ds\right)\wh{p}^M(t-s,\b 0, y)\cdot \frac{2\pi |y|^2}{\gamma}\;e^{2\gamma |y|}  \nonumber
\\
&\stackrel{\eqref{e:4.13}}{=}q(t,x, y)+\int_{0}^t \IP_{|x|} \left(\sigma^Y_{\{0\}}\in ds\right)\wh{p}^{Y}(t-s, 0, |y|)\frac{1}{4\pi |y|^2}\frac{2\pi |y|^2}{\gamma}\;e^{2\gamma |y|} \nonumber
\\
&=q(t,x, y)+\int_{0}^t \IP_{|x|} \left(\sigma^Y_{\{0\}}\in ds\right)\wh{p}^{Y}(t-s, 0, |y|)\frac{1}{2\gamma}\;e^{2\gamma |y|}  \nonumber
\\
&=q(t,x, y)+\frac{1}{2\gamma}\;e^{2\gamma |y|}\wh{p}^Y\left(t, |x|, |y|; \sigma_{\{0\}}\le t\right).  \label{e:24}
\end{align}
Applying \eqref{e:26} to the right hand side of \eqref{e:24}, we have for $x,y\in E_1\backslash \{\b0\}$,
\begin{equation}\label{e:4.21}
p(t,x,y)=q(t,x, y)+\frac{1}{4\gamma}\;e^{2\gamma |y|}\left(\wh{p}^{\wh{Y}}(t, |x|, |y|)-\wh{p}^Y_{\IR_+}(t, |x|, |y|)\right).
\end{equation} 
Note that  
\begin{equation}\label{change-measure-wh-y-E2}
p^{\wh{Y}}(t,x,y)=\wh{p}^{\wh{Y}}(t,x,y)\frac{1}{\phi_\gamma(y)^2}=\wh{p}^{\wh{Y}}(t,x,y)\frac{1}{2\gamma}e^{2\gamma |y|}, \quad x, y\in \IR_+
\end{equation}
and 
\begin{equation}\label{change-measure-y-E2}
p^Y_{\IR_+}(t,x,y)=\wh{p}^Y_{\IR_+}(t,x,y)\frac{1}{\phi_\gamma(y)^2}=\wh{p}^Y_{\IR_+}(t,x,y)\frac{1}{2\gamma}e^{2\gamma |y|}, \quad x,y\in \IR_+,
\end{equation}
respectively. Replacing the second term on the right hand side of \eqref{e:4.21} with \eqref{change-measure-wh-y-E2} and \eqref{change-measure-y-E2} yields
\begin{equation*}
p(t,x,y)=q(t,x, y)+\frac{1}{2}\left(p^{\wh{Y}}(t, |x|, |y|)-p^Y_{\IR_+}(t, |x|, |y|)\right), \quad x,y\in E_1\backslash \{\b0\},
\end{equation*}
where  $p^{\wh{Y}}(t, x, y)$ and $p^Y_{\IR_+}(t,x,y)$ are given in \eqref{density-RBM-drift}  \eqref{density-pY_R-}, respectively.
\end{proof}
\qed

\subsection{Case (ii): $x\in  E_1\backslash \{\b0\}, \,y \in E_2\backslash \{\b0\}$}

\begin{proposition}\label{HKE-case2}
\begin{equation*}
p(t,x,y)=\frac{1}{2}\left(p^{\wh{Y}}(t, |x|, |y|)-p^Y_{\IR_+}(t, |x|, |y|)\right), \quad x\in E_1\backslash \{\b0\}, y\in E_2\backslash \{\b0\},
\end{equation*}
\end{proposition}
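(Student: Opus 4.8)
The plan is to run the computation of Case (i) with one simplification: there is no ``killed'' contribution. Since $x\in E_1\setminus\{\b0\}$ and $y\in E_2\setminus\{\b0\}$ lie in the two distinct connected components of $E\setminus\{\b0\}$ and $M$ is continuous, $M$ started from $x$ cannot reach $y$ without first visiting $\b0$; hence the part process of $M$ killed on hitting $\b0$ never travels from $x$ to a point of $E_2\setminus\{\b0\}$, so $p_{E\setminus\{\b0\}}(t,x,y)=0$ and by \eqref{e:1.6} we have $p(t,x,y)=p(t,x,y;\,\sigma_{\{\b0\}}\le t)$ for every $t>0$. First I would apply Proposition~\ref{strongMarkov} with $z=\b0$ to write
\begin{equation*}
p(t,x,y)=\int_0^t \IP_x\!\left(\sigma^M_{\{\b0\}}\in ds\right)p^M(t-s,\b0,y).
\end{equation*}

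Next I would identify the two inputs. Because the restriction of $M$ to $E_1\setminus\{\b0\}$ is, via the isometry $\iota_1$, the process $X^{(3)}$ on $\IR^3\setminus\{0\}$, the law of $\sigma^M_{\{\b0\}}$ under $\IP_x$ equals that of $\sigma^Y_{\{0\}}$ under $\IP_{|x|}$, i.e.\ $\IP_x(\sigma^M_{\{\b0\}}\in ds)=\IP_{|x|}(\sigma^Y_{\{0\}}\in ds)$. For $p^M(t-s,\b0,y)$ with $y\in E_2\setminus\{\b0\}$, I would use that $\iota_2:\IR_+\cup\{0\}\to E_2$ is an isometry and $Y_t=u(M_t)=-|M_t|$ on $E_2$, so that the density of $M_t$ started from $\b0$ restricted to $E_2$, with respect to one-dimensional Lebesgue measure, is $\wh{p}^M(t,\b0,y)=\wh{p}^{Y}(t,0,-|y|)$, which equals $\wh{p}^{Y}(t,0,|y|)$ by the symmetry $-Y\overset{d}{=}Y$ that follows from \eqref{Y-SDE-alpha=gamma}. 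Since $m_\gamma$ restricted to $E_2$ equals $\phi_\gamma(|x|)^2\,dx=2\gamma e^{-2\gamma|x|}\,dx$, this converts to $p^M(t-s,\b0,y)=\tfrac{1}{2\gamma}e^{2\gamma|y|}\,\wh{p}^{Y}(t-s,0,|y|)$.

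Substituting and invoking Proposition~\ref{strongMarkov} once more, this time for $Y$ with $z=0$, I expect
\begin{equation*}
p(t,x,y)=\frac{1}{2\gamma}e^{2\gamma|y|}\int_0^t \IP_{|x|}\!\left(\sigma^Y_{\{0\}}\in ds\right)\wh{p}^{Y}(t-s,0,|y|)=\frac{1}{2\gamma}e^{2\gamma|y|}\,\wh{p}^{Y}\!\bigl(t,|x|,|y|;\,\sigma_{\{0\}}\le t\bigr).
\end{equation*}
Then \eqref{e:26} rewrites the right-hand side as $\tfrac{1}{4\gamma}e^{2\gamma|y|}\bigl(\wh{p}^{\wh{Y}}(t,|x|,|y|)-\wh{p}^{Y}_{\IR_+}(t,|x|,|y|)\bigr)$, and finally the change-of-measure identities \eqref{change-measure-wh-y-E2} and \eqref{change-measure-y-E2} convert the unsigned densities into densities with respect to $\wt{\fm}$, producing exactly $\tfrac12\bigl(p^{\wh{Y}}(t,|x|,|y|)-p^{Y}_{\IR_+}(t,|x|,|y|)\bigr)$, as claimed.

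The step I expect to be the crux --- and essentially the only thing that differs from Case (i) --- is the identity $\wh{p}^M(t,\b0,y)=\wh{p}^{Y}(t,0,-|y|)$ for $y\in E_2$: on the three-dimensional side the radial reduction carries the spherical Jacobian $4\pi r^2$, whereas on the one-dimensional side $\iota_2$ is an isometry and no such factor appears. Combined with $-Y\overset{d}{=}Y$, this is what forces the two ``off-diagonal'' blocks --- $E_1\to E_2$ here and $E_1\to E_1$ through $\b0$ in Case (i) --- to take the same form. Everything else is a verbatim repetition of the bookkeeping already carried out in Case (i), so I do not anticipate further difficulties.
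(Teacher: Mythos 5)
Your proposal is correct and follows essentially the same route as the paper: observe that every path from $E_1\setminus\{\b0\}$ to $E_2\setminus\{\b0\}$ must pass through $\b0$ so only the $\sigma_{\{\b0\}}\le t$ term survives, decompose at $\sigma_{\{\b0\}}$ via Proposition~\ref{strongMarkov}, identify the hitting-time law and the density from $\b0$ with those of the signed radial process $Y$, and conclude with \eqref{e:26} and the change of measure \eqref{change-measure-wh-y-E2}--\eqref{change-measure-y-E2}. The only cosmetic difference is that you convert to the $\wt{\fm}$-density at an intermediate step while the paper works with Lebesgue densities throughout and converts at the very end.
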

\begin{proof}
For this case, we notice that any path of $M$ has to pass $\b0$ in order to travel from $x$ to $y$.  It therefore follows from Proposition \ref{strongMarkov} that
\begin{align*}
\wh{p}(t,x,y)&= \wh{p}(t,x,y;\; \sigma^M_{\{0\}}\le t)=\int_{0}^t \IP_x \left(\sigma^M_{\{\b0\}}\in ds\right)\wh{p}^M(t-s, \b0, y)
\\
&=\int_{0}^t \IP_{|x|} \left(\sigma^Y_{\{0\}}\in ds\right)\wh{p}^Y(t-s, 0, -|y|)
\\
&=\wh{p}^Y(t, |x|, -|y|)\stackrel{\eqref{e:26}}{=}\frac{1}{2}\left(\wh{p}^{\wh{Y}}(t, |x|, |y|)-\wh{p}^Y_{\IR_+}(t, |x|, |y|)\right).
\end{align*}
By a change of measure in \eqref{change-measure-wh-y-E2} and \eqref{change-measure-y-E2}, it immediately follows
\begin{equation}
p(t,x,y)=\frac{1}{2}\left(p^{\wh{Y}}(t, |x|, |y|)-p^Y_{\IR_+}(t, |x|, |y|)\right).
\end{equation}
\end{proof}
\qed

\subsection{Case (iii): both $x, y \in E_2\backslash \{\b0\}$}

\begin{proposition}\label{HKE-case3}
\begin{equation*}
p(t,x,y)=\frac{1}{2}\left(p_{E_2\backslash \{\b0\}}(t,x,y)+p^{\wh{Y}}(t, |x|, |y|)\right), \quad x,y\in E_2\backslash \{\b0\};
\end{equation*}
\end{proposition}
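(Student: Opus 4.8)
The plan is to follow the proof of Case~(i) line for line, with the part process on $E_1\backslash\{\b0\}$ replaced by the part process on $E_2\backslash\{\b0\}$. Since $M$ is a diffusion, a path started in $E_2\backslash\{\b0\}$ cannot reach $E_1$ without first passing through $\b0$, so $\tau_{E_2\backslash\{\b0\}}=\sigma_{\{\b0\}}$; splitting $\IE_x[f(M_t)]$ according to whether $t<\sigma_{\{\b0\}}$ or $t\ge\sigma_{\{\b0\}}$ and using \eqref{e:1.7} gives, for $x,y\in E_2\backslash\{\b0\}$,
\[
p(t,x,y)=p_{E_2\backslash\{\b0\}}(t,x,y)+p(t,x,y;\,\sigma_{\{\b0\}}\le t),
\]
where $p_{E_2\backslash\{\b0\}}$ is the density of the Lemma preceding \eqref{density-p_E2} (the density of $X^{(+),0}$ transported by $\iota_2$). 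So everything reduces to evaluating the ``hitting-$\b0$'' term.

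For that term I would apply Proposition~\ref{strongMarkov} with $z=\b0$ --- legitimate because $M$ is continuous, carries a transition density, and $\b0$ is regular for itself (Proposition~\ref{PRO2}) --- to obtain
\[
p(t,x,y;\,\sigma_{\{\b0\}}\le t)=\int_0^t \IP_x\bigl(\sigma^M_{\{\b0\}}\in ds\bigr)\,p^M(t-s,\b0,y),
\]
and then rewrite both factors through the signed radial process $Y=u(M)$. On $E_2$ the map $u$ of \eqref{definition-radial-process} is an isometry up to sign, carrying $m_\gamma^2$ onto $\wt{\fm}|_{(-\infty,0)}$ with no Jacobian weight (in contrast to the factor $4\pi r^2$ in \eqref{e:4.13} on $E_1$); computing $\IP^M_{\b0}(M_t\in E_2,\ a\le|M_t|\le b)$ both directly, as $\int_{\{y\in E_2:\,a\le|y|\le b\}}\wh{p}^M(t,\b0,y)\,dy$, and through $u$, as $\IP^Y_0(-b\le Y_t\le -a)=\int_{-b}^{-a}\wh{p}^Y(t,0,r)\,dr$, and differentiating in $b$ yields $\wh{p}^M(t-s,\b0,y)=\wh{p}^Y(t-s,0,-|y|)$, hence $p^M(t-s,\b0,y)=\wh{p}^Y(t-s,0,-|y|)\,\tfrac{1}{2\gamma}e^{2\gamma|y|}$. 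For the other factor, $\IP_x(\sigma^M_{\{\b0\}}\in ds)=\IP^Y_{-|x|}(\sigma^Y_{\{0\}}\in ds)=\IP^Y_{|x|}(\sigma^Y_{\{0\}}\in ds)$, the last equality being the symmetry $-Y\overset{d}{=}Y$ that follows from \eqref{Y-SDE-alpha=gamma}. Substituting, the hitting-$\b0$ term becomes $\tfrac{1}{2\gamma}e^{2\gamma|y|}\int_0^t\IP^Y_{|x|}(\sigma^Y_{\{0\}}\in ds)\,\wh{p}^Y(t-s,0,-|y|)$, and this integral is exactly $\wh{p}^Y(t,|x|,-|y|)$ by \eqref{e:25}, which equals $\tfrac12\bigl(\wh{p}^{\wh{Y}}(t,|x|,|y|)-\wh{p}^Y_{\IR_+}(t,|x|,|y|)\bigr)$ by \eqref{e:26}. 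A change of measure via \eqref{change-measure-wh-y-E2} and \eqref{change-measure-y-E2} then gives
\[
p(t,x,y;\,\sigma_{\{\b0\}}\le t)=\tfrac12\bigl(p^{\wh{Y}}(t,|x|,|y|)-p^Y_{\IR_+}(t,|x|,|y|)\bigr).
\]

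Finally I would feed in the identity $p^Y_{\IR_+}(t,|x|,|y|)=p_{E_2\backslash\{\b0\}}(t,x,y)$ recorded in \eqref{density-pY_R-}, which absorbs half of the $p_{E_2\backslash\{\b0\}}$ term:
\[
p(t,x,y)=p_{E_2\backslash\{\b0\}}(t,x,y)+\tfrac12 p^{\wh{Y}}(t,|x|,|y|)-\tfrac12 p_{E_2\backslash\{\b0\}}(t,x,y)=\tfrac12\bigl(p_{E_2\backslash\{\b0\}}(t,x,y)+p^{\wh{Y}}(t,|x|,|y|)\bigr),
\]
which is the assertion. The only genuinely substantive step is the radial identification $\wh{p}^M(t-s,\b0,y)=\wh{p}^Y(t-s,0,-|y|)$ on the one-dimensional leg --- the $E_2$-analogue of \eqref{e:4.13}, which is in fact easier here because $u|_{E_2}$ carries no weight --- while the rest is bookkeeping with the changes of measure and the factors of $\tfrac12$, together with the symmetry of $Y$ about the origin. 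One should also check that $M$ started in $E_2\backslash\{\b0\}$ cannot leave $E_2\backslash\{\b0\}$ other than through $\b0$ (in particular does not escape to ``$+\infty$'' in finite time), which holds because on $E_2$ the process is a Brownian motion with constant drift $-\gamma$ toward the origin.
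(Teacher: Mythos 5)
Your proposal is correct and follows essentially the same route as the paper: the decomposition into the killed part plus the hitting-$\b0$ part, the identification of the latter with the signed radial process $Y$ via its symmetry about the origin and \eqref{e:26}, the change of measure, and the final absorption of half of the $p_{E_2\backslash \{\b0\}}$ term through \eqref{density-pY_R-}. The only difference is that you spell out the strong Markov step and the radial identification at $\b0$ in more detail than the paper does for this case, where it simply asserts $p^M(t,x,y;\sigma^M_{\{\b0\}}\le t)=p^Y(t,-|x|,-|y|;\sigma^Y_{\{0\}}\le t)$.
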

\begin{proof}
 We note that similar to the previous two cases, it holds
\begin{align}
p(t,x,y)&=p_{E_2\backslash \{\b0\}}(t,x,y)+p^M\left(t, x, y; \sigma^M_{\{\b0\}}\le t\right)   \nonumber
\\
&=p_{E_2\backslash \{\b0\}}(t,x,y)+p^Y\left(t, -|x|, -|y|; \sigma^Y_{\{0\}}\le t\right)\nonumber
\\
&=p_{E_2\backslash \{\b0\}}(t,x,y)+\frac{1}{2\gamma}e^{2\gamma |y|}\;\wh{p}^Y(t, -|x|, -|y|; \sigma^Y_{\{0\}}\le t). \label{e:4.15}
\end{align}
Again we notice that $Y$ is symmetric about $0$, so
\begin{align}\label{e:4.16}
\wh{p}^Y(t, -|x|, -|y|; \sigma^Y_{\{0\}}\le t)=\wh{p}^Y(t, |x|, |y|; \sigma^Y_{\{0\}}\le t)\stackrel{\eqref{e:26}}{=}\frac{1}{2}\left(\wh{p}^{\wh{Y}}(t, |x|, |y|)-\wh{p}^Y_{\IR_+}(t, |x|, |y|)\right).
\end{align}
Applying \eqref{e:4.16} to the right hand side of \eqref{e:4.15} yields
\begin{align*}
p(t,x,y)&=p_{E_2\backslash \{\b0\}}(t,x,y)+\frac{1}{4\gamma}e^{2\gamma |y|}\left(\wh{p}^{\wh{Y}}(t, |x|, |y|)-\wh{p}^Y_{\IR_+}(t, |x|, |y|)\right)
\\
&=p_{E_2\backslash \{\b0\}}(t,x,y)+\frac{1}{2}\left(\wh{p}^{\wh{Y}}(t, |x|, |y|)-\wh{p}^Y_{\IR_+}(t, |x|, |y|)\right)\frac{1}{\phi(y)^2}
\\
&=p_{E_2\backslash \{\b0\}}(t,x,y)+\frac{1}{2}\left(p^{\wh{Y}}(t, |x|, |y|)-p^Y_{\IR_+}(t, |x|, |y|)\right)
\\
&=\frac{1}{2}\left(p^{\wh{Y}}(t, |x|, |y|)+p_{E_2\backslash \{\b0\}}(t,x,y)\right),
\end{align*}
where the last $``="$ is due to \eqref{density-pY_R-}.
\end{proof}
\qed

\subsection{Case  (iv): $x=\b0,\, y\in E$ }
The remaining case is that at least one of $x,y$ is $\b0$. In view of the symmetry of $M$ with respect to $m_\gamma$, without loss of generality, we assume $x=\b0$. 
\begin{proposition}
\begin{equation*}
p(t,\b0,y)=\frac{1}{2}\left(1+\frac{1}{\pi \gamma} e^{\gamma |y|-\gamma t^2/2} \int_0^\infty \frac{s e^{-s^2t/2}}{s^2+\gamma^2} \left(s^2\cos(s|y|)-s\gamma \sin(s|y|)\right)ds \right),\quad \text{for all }y\in E.
\end{equation*}
\end{proposition}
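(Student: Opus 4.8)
The plan is to follow the pattern of Cases~(i)--(iii) and reduce everything to the radial process, namely to prove that
\[
p(t,\b0,y)=\tfrac12\,p^{\wh Y}(t,0,|y|)\qquad\text{for every }y\in E,
\]
and then to read off the closed form by inserting $x=0$ into the Linetsky formula \eqref{density-RBM-drift}. When $x=0$ the factor $s\cos(sx)-\gamma\sin(sx)$ there equals $s$, so the integrand becomes $s\bigl(s\cos(s|y|)-\gamma\sin(s|y|)\bigr)=s^2\cos(s|y|)-s\gamma\sin(s|y|)$ and the prefactor becomes $e^{\gamma|y|-\gamma^2t/2}$, which is exactly the integrand and exponent appearing in the statement; multiplying by $\tfrac12$ then produces the asserted expression. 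So the whole content is the displayed reduction. I would also note first that, starting from $\b0$, the part process of $M$ on $E\setminus\{\b0\}$ is killed at time $0$ (since $\b0$ is regular for itself, by Proposition~\ref{PRO2}), so all the mass of $P_t(\b0,\cdot)$ is carried through $\b0$ and no ``$q$-type'' term appears.

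The key probabilistic input is the symmetry of $Y$ about $0$: by \eqref{Y-SDE-alpha=gamma} one has $-Y\overset{d}{=}Y$ when $Y_0=0$, hence $\wh p^Y(t,0,r)=\wh p^Y(t,0,-r)$ for $r>0$, and since $\wh Y=|Y|$, $\wh p^{\wh Y}(t,0,r)=\wh p^Y(t,0,r)+\wh p^Y(t,0,-r)=2\wh p^Y(t,0,r)$. For $y\in E_2\setminus\{\b0\}$ we have $\{M_t\in E_2,\ |M_t|\in[a,b]\}=\{Y_t\in[-b,-a]\}$, so the density of $M$ with respect to one‑dimensional Lebesgue measure on $E_2$ is $\wh p^M(t,\b0,y)=\wh p^Y(t,0,-|y|)=\tfrac12\wh p^{\wh Y}(t,0,|y|)$; dividing by the Radon--Nikodym factor $\phi_\gamma(y)^2=2\gamma e^{-2\gamma|y|}$ (cf.\ \eqref{relationship_p_phat}, \eqref{change-measure-wh-y-E2}) gives $p(t,\b0,y)=\tfrac12 p^{\wh Y}(t,0,|y|)$ there.

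For $y\in E_1\setminus\{\b0\}$ I would use the isotropy of $M$ on $E_1$ (Section~\ref{S:3.2}): since $\b0$ is fixed by every rotation, $\IP_\b0(M_t\in\cdot)$ restricted to $E_1$ is rotationally invariant, so $p(t,\b0,\cdot)$ depends on $y\in E_1\setminus\{\b0\}$ only through $r=|y|$, and the spherical‑integration identity \eqref{e:4.13}, $\wh p^Y(t,0,r)=4\pi r^2\,\wh p^M(t,\b0,y)$ with $r=|y|$, applies. Combining $\wh p^M(t,\b0,y)=\tfrac{1}{4\pi|y|^2}\wh p^Y(t,0,|y|)=\tfrac{1}{8\pi|y|^2}\wh p^{\wh Y}(t,0,|y|)$ with $m_\gamma|_{E_1}(dy)=\tfrac{\gamma}{2\pi}\tfrac{e^{-2\gamma|y|}}{|y|^2}dy$ gives $p(t,\b0,y)=\tfrac{1}{4\gamma}e^{2\gamma|y|}\wh p^{\wh Y}(t,0,|y|)=\tfrac12 p^{\wh Y}(t,0,|y|)$, again by \eqref{change-measure-wh-y-E2}. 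The case $y=\b0$ then follows by continuity: $\tfrac12 p^{\wh Y}(t,0,|y|)$ extends continuously across $\b0$ and agrees with the continuous version of $p(t,\b0,\cdot)$ furnished by the joint continuity of the radial density (Proposition~\ref{continuity-density-Y-m} and Corollary~\ref{joint-cont-Y}, adapted to $\wh Y$).

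I expect the only genuine care to lie in the bookkeeping of Radon--Nikodym factors — the polar Jacobian $4\pi r^2$ on $E_1$, the densities $\psi_\gamma^2$ and $\phi_\gamma^2$ converting between Lebesgue measure and $m_\gamma$, and especially the factor $\tfrac12$, which arises solely from the even split of $Y$ between the two half‑lines at its symmetry point $0$ — together with making the rotational‑invariance reduction on $E_1$ and the continuous extension to $\b0$ precise. No hard estimate is involved.
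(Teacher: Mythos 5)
Your proposal is correct and rests on the same reduction as the paper's proof: identifying $p(t,\b0,y)$ with $\tfrac12 p^{\wh Y}(t,0,|y|)$ via the radial process, the polar identity \eqref{e:4.13} on $E_1$, the identification \eqref{eq:4.25} on $E_2$, and the Radon--Nikodym bookkeeping, then inserting $x=0$ into Linetsky's formula \eqref{density-RBM-drift}. The only (harmless) difference is that you evaluate directly at the origin using the symmetry split $\wh p^{\wh Y}(t,0,r)=\wh p^{Y}(t,0,r)+\wh p^{Y}(t,0,-r)=2\wh p^{Y}(t,0,r)$, whereas the paper reaches the same endpoint by taking a sequence $x_n\in E_2\setminus\{\b0\}$ with $|x_n|\downarrow 0$ and passing to the limit in the Case (ii)/(iii) formulas via the joint continuity of $p^Y$ (Corollary \ref{joint-cont-Y}); your computation also confirms that the exponent in the displayed statement should read $\gamma^2 t/2$ rather than $\gamma t^2/2$.
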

\begin{proof} We divide our discussion into two subcases depending on the position of $y$. 
\\
{\it Subcase (i)}: $y\in E_2\backslash \{\b0\}$. Recall that we let $\wh{p}^M$ be the transition density of $M$ with respect to Lebesgue measure on $E$.   Given any $x\in E_2$, any $0<a<b$, 
\begin{eqnarray*}
&&\int_{a<|u|<b, u\in E_2} \wh{p}^M(t, x, u)du=\int_{a<|u|<b, u\in E_2} p^M(t, x, u)m_\gamma (du) 
\\
&=& \IP_{x}\left[M_t\in E_2, \, a<|M_t|<b\right] = \IP_{-|x|}\left[-b<Y<-a\right]
\\
&=&\int_{-b}^{-a}p^Y(t, -|x|, \xi)\wt{\mathfrak{m}}(d\xi) = \int_{-b}^{-a}p^Y(t, -|x|, \xi )2\gamma e^{-2\gamma |\xi|}d\xi
\\
&\stackrel{v=-\xi}{=}&\int_a^b \wh{p}^Y(t, -|x|, -v)dv.
\end{eqnarray*}
Since $E_2\cong\IR_+$, this implies 
\begin{equation}\label{eq:4.25}
\wh{p}^M(t, x, u)=\wh{p}^Y(t, -|x|,- |u|), \quad \text{for all }t>0, x\in E_2, u\in E_2\backslash \{\b0\}. 
\end{equation}
Thus in view of the definitions of $m_\gamma$ and $\wt{\mathfrak{m}}$, it holds for all $x, u\in E_2$ that
\begin{equation}\label{eq:4.26}
p^M(t, x, u)=\wh{p}^M(t, x, u)\frac{1}{2\gamma} e^{2\gamma |u|}=\wh{p}^Y(t, -|x|, -|u|)\frac{1}{2\gamma}e^{2\gamma |u|}=p^Y(t, -|x|, -|u|)=p^Y(t,|x|, |u|),
\end{equation}
where the last $``="$ is due to the symmetry of $Y$: $Y\stackrel{d}{=}-Y$. 
It has been noted in  Corollary \ref{joint-cont-Y}, $p^Y$ is jointly continuous on $(0, +\infty)\times \IR\times \IR$.  We take a sequence of $\{x_n\}_{n\ge 1}\subset E_2\backslash \{\b0\}$ such that $|x_n|\downdownarrows 0$ as $n\rightarrow \infty$. By the joint continuity of $p^Y$,  
\begin{equation}\label{eq:4.27}
p^M(t, \b0, y)\stackrel{\eqref{eq:4.26}}{=}p^Y(t, 0, -|y|)=\lim_{n\rightarrow \infty} p^Y(t, -|x_n|, -|y|) \stackrel{\eqref{eq:4.26}}{=} \lim_{n\rightarrow \infty}p^M(t, x_n, y). 
\end{equation}
Now by Proposition \ref{HKE-case3}, 
\begin{eqnarray*}
&& p^M(t, \b0, y) \stackrel{\eqref{eq:4.27}}{=} \lim_{n\rightarrow \infty}p^M(t, x_n, y)
 \\
 &=& \lim_{n\rightarrow \infty} \frac{1}{2} \left[    p^{\wh{Y}}(t, |x_n|, |y|)-p^Y_{\IR_+}(t, |x_n|, |y|)\right]
\\
&=&\lim_{n\rightarrow \infty}\frac{1}{2}\Bigg[ 1 +\frac{1}{\pi \gamma}e^{\gamma (|x_n|+|y|)-\frac{\gamma^2 t}{2}}\int_{0}^\infty \frac{e^{-\frac{s^2t}{2}}}{s^2+\gamma^2}\left[s \cos(s|x_n|)-\gamma \sin(s|x_n|)\right]\left[s\cos(s|y|)-\gamma \sin (s|y|)\right]ds  
\\
&+&\frac{1}{\gamma\sqrt{8\pi t}}e^{-\gamma^2 t/2+\gamma(|x_n|+|y|)}\left(e^{-(|x_n|-|y|)^2/(2t)}-e^{-(|x_n|+|y|)^2/(2t)}\right)\Bigg]
\\
&=& \frac{1}{2}\left[1+\frac{1}{\pi\gamma}e^{\gamma |y|-\frac{\gamma^2t}{2}}  \left(s^2\cos(s|y|)-s\gamma \sin(s|y|)\right)ds \right], \quad y\in E_2\backslash \{\b0\}.
\end{eqnarray*}
{\it Subcase (ii)}: $y\in E_1\backslash \{\b0\}$. By the rotational invariance of $M$ on $E_1$, we set for $x\in E_1$ that
\begin{equation}
\bar{p}^M(t, x, r):=\wh{p}^M(t,x, y), \quad \text{for } r=|y|. 
\end{equation}
 Similar to subcase (i), given any $x\in E_2$, any $0<a<b$, 
\begin{eqnarray*}
&&\int_a^b 4\pi r^2 \bar{p}^M(t, x, r)  dr =\int_{a<|y|<b, u\in E_1} \wh{p}^M(t, x, y)dy
\\
&=&\int_{a<|y|<b, y\in E_1} p^M(t, x, y)m_\gamma (dy) 
\\
&=& \IP_{x}\left[M_t\in E_1, \, a<|M_t|<b\right] = \IP_{-|x|}\left[a<Y<b\right]
\\
&=&\int_a^b p^Y(t, -|x|, \xi)\wt{\mathfrak{m}}(du) = \int_a^b \wh{p}^Y(t, -|x|, \xi) d\xi.
\end{eqnarray*}
This justifies that
\begin{equation*}
4\pi |y|^2 \bar{p}^M(t, x, |y|) =\wh{p}^Y(t, -|x|, |y|) , \quad x\in E_2,\,y \in E_1\backslash \{\b0\}. 
\end{equation*}
Therefore, 
\begin{align}
p^M(t, x, y) & = \wh{p}^M(t,x, y)\frac{2\pi}{\gamma}|y|^2e^{2\gamma |y|}=\bar{p}^M(t, x, |y|) \frac{2\pi}{\gamma}|y|^2e^{2\gamma |y|}  \nonumber
\\
&=\wh{p}^Y(t, -|x|, |y|) \frac{1}{2\gamma }e^{2\gamma |y|} = p^{Y}(t, -|x|, |y|) , \quad \text{for }x\in E_2,\, y\in E_1\backslash \{\b0\}.  \label{eq:4.29}
\end{align}
Now again in view of the joint continuity of $p^Y$, taking a sequence of $\{x_n\}_{n\ge 1}\subset E_2\backslash \{\b0\}$ such that $|x_n|\downdownarrows 0$ as $n\rightarrow \infty$ yields that 
\begin{align}\label{eq:4.30}
p^M(t, \b0, y )\stackrel{\eqref{eq:4.29}}{=} p^Y(t, 0, |y|) = \lim_{n\rightarrow \infty} p^Y(t, -|x_n|, |y|)\stackrel{\eqref{eq:4.29}}{=} p^M(t, x_n, y)
\end{align}
It now follows from Proposition \ref{HKE-case2}  as well as the fact that $M$ is symmetric with respect to $m_\gamma$  that 
\begin{eqnarray*}
&& p^M(t, \b0, y) \stackrel{\eqref{eq:4.30}}{=} \lim_{n\rightarrow \infty}p^M(t, x_n, y) =p^M(t, y, x_n)
 \\
 &=& \lim_{n\rightarrow \infty} \frac{1}{2} \left[    p^{\wh{Y}}(t, |y|, |x_n|)-p^Y_{\IR_+}(t, |y|, |x_n|)\right]
\\
&=&\lim_{n\rightarrow \infty}\frac{1}{2}\Bigg[ 1 +\frac{1}{\pi \gamma}e^{\gamma (|x_n|+|y|)-\frac{\gamma^2 t}{2}}\int_{0}^\infty \frac{e^{-\frac{s^2t}{2}}}{s^2+\gamma^2}\left[s \cos(s|y|)-\gamma \sin(s|y|)\right]\left[s\cos(s|x_n|)-\gamma \sin (s|x_n|)\right]ds  
\\
&+&\frac{1}{\gamma\sqrt{8\pi t}}e^{-\gamma^2 t/2+\gamma(|x_n|+|y|)}\left(e^{-(|x_n|-|y|)^2/(2t)}-e^{-(|x_n|+|y|)^2/(2t)}\right)\Bigg]
\\
&=& \frac{1}{2}\left[1+\frac{1}{\pi\gamma}e^{\gamma |y|-\frac{\gamma^2t}{2}}  \left(s^2\cos(s|y|)-s\gamma \sin(s|y|)\right)ds \right], \quad y\in E_1\backslash \{\b0\}.
\end{eqnarray*}
Finally, since $y=\b0$ is a singleton of measure $0$, the proof is complete. 
\end{proof}
\qed

Theorem \ref{main-thm} is the combination of Proposition 10-13, which is now proved. 

\bibliographystyle{abbrv}
\bibliography{VaryDim}


\end{document}